\newtheorem{thm}{Theorem}[section]
\newtheorem{coro}[thm]{Corollary}
\newtheorem{lemma}[thm]{Lemma}
\newtheorem{prop}[thm]{Proposition}
\theoremstyle{definition}
\newtheorem{defn}[thm]{Definition}
\newtheorem{example}[thm]{Example}
\newtheorem{remark}[thm]{Remark}
\newtheorem{ques}[thm]{Question}
\numberwithin{equation}{thm}
\numberwithin{equation}{subsection}
\newcommand{\rank}{\mathrm{rank}}
\newcommand{\Spm}{\mathrm{Spm}}
\newcommand{\car}{\mathrm{char}}
\newcommand{\cls}{\mathrm{cls}}
\newcommand{\gen}{\mathrm{gen}}
\newcommand{\spn}{\mathrm{spn}}
\newcommand{\rO}{\mathrm{O}}
\newcommand{\fp}{\mathfrak{p}}
\newcommand{\fq}{\mathfrak{q}}
\newcommand{\bfO}{\mathbf{O}}
\newcommand{\bfA}{\mathbf{A}}
\newcommand{\bfI}{\mathbf{I}}
\title{Integral Springer Theorem for Quadratic Lattices \\ under Base Change of Odd Degree}
\author{Yong HU, Jing LIU, and Fei XU}
\date{}
\begin{document}
	\maketitle

	\begin{abstract}
		 A quadratic lattice $M$ over a Dedekind domain $R$ with fraction field $F$ is defined to be a finitely generated torsion-free $R$-module equipped with a non-degenerate quadratic form on the $F$-vector space $F\otimes_{R}M$. Assuming that $F\otimes_{R}M$ is isotropic of dimension $\geq 3$ and that $2$ is invertible in $R$, we prove that a quadratic lattice $N$ can be embedded into a quadratic lattice $M$ over $R$ if and only if $S\otimes_{R} N$ can be embedded into $S\otimes_{R}M$ over $S$, where $S$ is the integral closure of $R$ in a finite extension of odd degree of $F$. As a key step in the proof, we establish several versions of the norm principle for integral spinor norms, which may be of independent interest.
	\end{abstract}

	
	\section{Introduction}

Let $F$ be a field of characteristic $\neq 2$. A well known theorem  asserts that an anisotropic quadratic form over $F$ remains anisotropic over any finite extension of odd degree of $F$. This was conjectured by Witt in \cite{Wit37} and was proved by Springer in \cite{Spr52}. By using Witt cancellation, one can show that this classical theorem of Springer is essentially equivalent to a relative version:
For two non-degenerate quadratic spaces $(V,f)$ and $(W,g)$ over $F$, $W$ is represented by $V$ over $F$ (in the sense that there is an $F$-linear map $W\to V$ compatible with the quadratic forms $f$ and $g$)  if and only if $E\otimes_FW$ is represented by $E\otimes_FV$ for some finite extension $E/F$ of odd degree.
	
For integral quadratic forms over number fields, Earnest and Hsia studied Springer-type theorems for spinor genera in \cite{EH77} and \cite{EH78}. Inspired by their work,  the third-named author conjectured an arithmetic analogue of the relative version of Springer's theorem for indefinite quadratic forms over the ring of integers of a number field in  \cite{X99}, where among others a proof for forms over $\mathbb Z$ was given. Recently, this conjecture has been solved completely by Z. He in \cite{H23}. The analogous statement for definite integral quadratic forms can fail, as shown by Daans, Kala, Kr\'asensk\'y and Yatsyna in \cite{DaansKalaKrasenskyYatsyna24}.
	
On the other hand, the classical version of Springer's theorem stated in terms of isotropy of forms has been extended to unimodular quadratic forms over semilocal rings by Panin and Rehmann in \cite{PR07}, Panin and Pimenov in \cite{PP10}, and Gille and Neher in \cite{GN21}.

\medskip

In this paper, we consider quadratic forms over general Dedekind domains and prove a relative Springer theorem for them under some mild assumptions. Throughout this paper we work with a field $F$ of characteristic $\car(F)\neq 2$ and let $R$ be a Dedekind domain with fraction field $F$. As in \cite{O'M00}, a (non-degenerate) \emph{quadratic lattice} over $R$ is defined to be a finitely generated torsion-free $R$-module $M$ together with a non-degenerate quadratic form $Q: F\otimes_RM\to F$ defined on the $F$-vector space $F\otimes_{R}M$. When there is no risk of confusion, we will simply say that $M$ is a quadratic lattice over $R$.

For two quadratic lattices $(M,Q)$ and $(N,q)$ over $R$, a \emph{representation} of $N$ in $M$ is an $R$-module homomorphism $\sigma: N\to M$ such that $q=Q\circ\sigma$. By the non-degeneracy of $q$, such a homomorphism $\sigma$ is always injective, so we also say that $\sigma$ is an \emph{embedding} of $N$ into $M$. When such a $\sigma$ exists, we say that $N$ is \emph{represented by} $M$ or that $N$ can be \emph{embedded} into $M$ over $R$.

If $S$ is the integral closure of $R$ in a finite extension of $F$, then $S$ is again a Dedekind domain, by the Krull--Akizuki theorem. Thus, for any quadratic lattice $M$ over $R$, the base extension $S\otimes_RM$ has a natural structure of quadratic lattice over $S$.

\begin{ques}\label{Spring-question}
 Let $M$ and $N$ be quadratic lattices over $R$. Let $S$ be the integral closure of $R$ in a finite extension $E/F$ of odd degree.
Suppose that $S\otimes_{R}N$ can be embedded into $S\otimes_{R}M$ over $S$.
Can we embed $N$ into $M$ over $R$\,?
\end{ques}

The main result of this paper is the following theorem.
	
\begin{thm} \label{intro}
Assume that the quadratic space $F\otimes_{R}M$ is isotropic of dimension $\geq 3$ and that $2$ is invertible in $R$.

Then Question \ref{Spring-question} has a positive answer.
\end{thm}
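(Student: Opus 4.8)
The plan is to split the problem into a rational part, a local part at each prime of $R$, and a global spinor‑norm part, the last of which is discharged by an integral norm principle together with the oddness of $[E:F]$.

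\medskip

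\noindent\textbf{Rational embedding.} Viewed over $E$, the hypothesis says that the quadratic space $E\otimes_F(F\otimes_R N)$ is represented by $E\otimes_F(F\otimes_R M)$; since $[E:F]$ is odd, the relative form of Springer's theorem recalled in the introduction yields a representation $j\colon F\otimes_R N\hookrightarrow F\otimes_R M$ over $F$. Fix $j$ once and for all and write $F\otimes_R M=j(F\otimes_R N)\perp U$. Note that $F\otimes_R M$, hence also $E\otimes_R M$, remains isotropic of dimension $\ge 3$.

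\medskip

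\noindent\textbf{Local representations.} Fix a prime $\fp$ of $R$. From $\sum_{\fq\mid\fp}e(\fq/\fp)f(\fq/\fp)=[E:F]$ and the oddness of $[E:F]$ we get a prime $\fq$ of $S$ above $\fp$ with $e(\fq/\fp)f(\fq/\fp)$ odd, so that $\widehat{E}_{\fq}/\widehat{F}_{\fp}$ is an odd‑degree extension of complete discretely valued fields of residue characteristic $\neq 2$. Base changing the $S$‑embedding to $\widehat{S}_{\fq}$ and invoking the \emph{local Springer theorem for quadratic lattices over complete discrete valuation rings} with $2$ invertible — which reduces via Jordan splittings to the unimodular case treated by the results of Panin--Rehmann, Panin--Pimenov and Gille--Neher together with classical Springer over the residue field — we conclude that $\widehat{N}_{\fp}$ is represented by $\widehat{M}_{\fp}$ over $\widehat{R}_{\fp}$. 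By Witt's extension theorem the local representation may be chosen compatible with $j$ up to an isometry of $\widehat{F}_{\fp}\otimes_R M$, and equal to $j$ for all but finitely many $\fp$; assembling the resulting local lattices shows that $N$ is represented by some lattice in the genus of $M$ via an $F$‑isometric copy of $j$.

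\medskip

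\noindent\textbf{From the genus to $M$ via spinor norms.} Here the assumptions that $F\otimes_R M$ is isotropic of dimension $\ge 3$ are essential. To the data $(j,N,M)$ one associates a \emph{relative integral spinor norm group} $X\subseteq J_F$ (the restricted product over $\fp$ of the local relative spinor norm groups, which contain $(\widehat{F}_{\fp}^{\,\times})^2$, so that $J_F/(F^{\times}X)$ is an elementary abelian $2$‑group), and the main intermediate result of the paper — valid over an arbitrary Dedekind domain with $2$ invertible, its proof replacing strong approximation, unavailable in this generality, by Eichler‑transformation arguments exploiting isotropy in dimension $\ge 3$ — asserts: once $N$ is represented by the genus of $M$ with respect to $j$, one has $N\hookrightarrow M$ over $R$ if and only if $J_F=F^{\times}X$. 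Running the same analysis over $S$, the given $S$‑embedding yields $J_E=E^{\times}X_E$ for the corresponding group $X_E\subseteq J_E$. Now invoke the \emph{norm principle for integral spinor norms} (the ``several versions'' of the abstract, for spinor norm groups of lattices and relative spinor norm groups of embeddings, locally and globally): $N_{E/F}(X_E)\subseteq X$. Since also $N_{E/F}(E^{\times})\subseteq F^{\times}$ and the composite $J_F\hookrightarrow J_E\xrightarrow{\,N_{E/F}\,}J_F$ is the $[E:F]$‑th power map, given $a\in J_F$ write $a=b\,x$ with $b\in E^{\times}$ and $x\in X_E$; then $a^{[E:F]}=N_{E/F}(b)\,N_{E/F}(x)\in F^{\times}X$, and because $[E:F]$ is odd while $J_F/(F^{\times}X)$ has exponent dividing $2$, we get $a\in F^{\times}X$. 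Hence $J_F=F^{\times}X$, and therefore $N$ can be embedded into $M$ over $R$.

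\medskip

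\noindent\textbf{Where the difficulty lies.} Two steps carry the real weight. The first is the local Springer theorem for quadratic lattices under an odd‑degree extension: the reduction to the unimodular case is delicate because of the interplay between ramification and the scaling of Jordan components, and one must match the full set of local invariants rather than merely the residue‑field quadratic spaces. The second, and the technical heart — isolated in the paper as being of independent interest — is the correct formulation and proof of the norm principle for \emph{integral} spinor norms: the classical Knebusch norm principle concerns spinor norm groups of quadratic \emph{spaces}, whereas here one needs its lattice‑theoretic and relative refinements, tracking how the orthogonal group and the Eichler transformations of $S\otimes_R M$ behave under the transfer $N_{E/F}$. Granting these two inputs, the final passage in the third step is formal.
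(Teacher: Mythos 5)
Your high-level strategy --- rational embedding by Springer over fields, local Springer theorem to get $N\hookrightarrow\gen(M)$, and then a global argument driven by a norm principle for integral spinor norms together with an ``odd degree into a $2$-torsion group'' computation --- matches the paper's skeleton. But there are two concrete misstatements that amount to a real gap in the argument.

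First, you assert that the passage from ``$N$ embeds into a lattice in $\gen(M)$'' to ``$N$ embeds into $M$ itself'' is controlled by the criterion $J_F=F^\times X$, and that this criterion is proved by replacing strong approximation, ``unavailable in this generality,'' by Eichler transformations. Both halves of this sentence are wrong. The paper's intermediate result (Theorem \ref{spin-Springer}) is phrased purely in terms of spinor genera: if $\widetilde{N}\hookrightarrow\spn^+(\widetilde{M})$ then $N\hookrightarrow\spn^+(M)$, and the relevant object is not an equality $J_F=F^\times X$ but the triviality of the class $[\spn^+(M)]$ in the quotient $\rO^+_{\bfA}(V)/\rO^+(V)\rO'_{\bfA}(V)X^+_{\bfA}(P/N)$; this quotient is $2$-torsion and maps to its $E$-analogue, and injectivity of that map is established by the adelic norm principle (Proposition \ref{NP-adelic}) and the odd-degree trick --- that part you do reproduce correctly. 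The passage from spinor genus to class, on the other hand, is handled exactly by strong approximation for $\mathbf{Spin}(V)$, which \emph{is} available over an arbitrary Dedekind domain when $V$ is isotropic of dimension $\ge 3$: this is the Harder--Gille theorem cited in Remark \ref{rmkSA}, combined with the rationality of the spin group due to Platonov; see Lemma \ref{SAcoro} and Corollary \ref{springer-coro}. Your ``$J_F=F^\times X$'' condition would force $N$ to embed into \emph{every} proper class in $\gen(M)$, which is far stronger than what is true or needed (take $N=M$ in a genus with more than one class to see it fail), so as stated your global step does not close.

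Second, your route to the local integral Springer theorem --- citing Panin--Rehmann, Panin--Pimenov, Gille--Neher for the unimodular case and then ``reducing via Jordan splittings'' --- is not the paper's argument and is underspecified in exactly the spot you flag as delicate. Those references give the absolute isotropy version of Springer's theorem over semilocal rings, not the relative representation statement you need, and the reduction to Jordan components is not componentwise (ramification can merge and rescale components). The paper instead proves the local statement cleanly from O'Meara's embedding criterion (Theorem \ref{localrep}), which reduces the lattice question to quadratic spaces over $F$ via the truncations $L_{\le i}$, and then invokes the classical Springer theorem of Springer once; the uniqueness of Jordan splittings and Witt cancellation over $R$ are themselves corollaries of this criterion (Corollaries \ref{uniqueness} and \ref{cancellation}), so there is no circularity. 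You should replace your appeal to the semilocal unimodular theorems by O'Meara's criterion, and replace the ``$J_F=F^\times X$'' step by the two-step structure: Springer for spinor genera via the adelic norm principle, then spinor genus $=$ class via Harder--Gille strong approximation.
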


The proof of this theorem will be given in the end of \S\;\ref{sec5.2}.

\begin{remark}
It is worth giving some remarks about our assumptions in Theorem \ref{intro}:
	
(1) Neither the assumption that $\dim(F\otimes_RM)\geq 3$ nor that $F\otimes_{R}M$ is isotropic can be removed in general. Explicit counterexamples are provided in Examples \ref{c-exam} and \ref{isotropy-exam}. These assumptions are needed in our proof to ensure that the spin group of the quadratic space satisfies strong approximation over $R$ (Remark \ref{rmkSA}).

Note that in the conjecture raised by the third-named author in \cite[p.175]{X99} (which is now proved in \cite{H23}), where $R$ is the ring of integers of a number field, one only needs to assume that $F\otimes_{R}M$ is isotropic over an archimedean completion. See, however, Example \ref{NumberField-exam} and Remark \ref{definite-exam}.
	
(2) Our proof uses the assumption that $2$ is invertible in $R$. It is likely that the theorem remains valid without this assumption.
\end{remark}

The basic strategy of our proof of Theorem \ref{intro} is similar to that of the proof in the number field case in \cite{X99} and \cite{H23}. We first prove a local integral Springer theorem. To pass from the local case to the global case, we use the adelic language for orthogonal groups and the genus theory for quadratic lattices to endow the set of isomorphism classes of lattices in which $N$ embeds with a natural group structure. Then a group homomorphism between certain quotients of id\`ele groups is associated to Question \ref{Spring-question}, and so the problem reduces to the injectivity of this group homomorphism. To prove the desired injectivity, we have to prove several versions of norm principles for integral spinor norms. While what we use ultimately is an adelic version, proving the norm principle in the local case turns out to be the core of the most technical arguments in the whole paper.

Actually, by using the notion of spinor genus (Definition \ref{defn4.1}), we obtain a refined version of Theorem \ref{intro}, which can be thought of as a Springer theorem for the embedability into a spinor genus (Theorem \ref{spin-Springer}).

\medskip

Compared to the classical number field case, one of the main difficulties in our situation is that there is no control of dimensions of anisotropic lattices over the local completions, since the residue fields of $R$ can be arbitrary so that anisotropic quadratic forms can exist over the residue fields in arbitrarily large dimensions. This is partially responsible for the lengthy arguments in \S\S\;\ref{sec3.2} and \ref{sec3.3}.

As we have said above, a crucial step in our proof consists in establishing an appropriate norm principle for integral spinor norms. This phenomenon resembles an integral version of a special case of the norm principles for reductive groups studied previously in \cite{M95},  \cite{G97} and \cite{BM02}. So we think that our results in this direction (cf. \S\S\;\ref{sec3.3} and \ref{norm}) may be of independent interest.

\medskip

The paper is organized as follows. We first show  in \S\;\ref{loc} that the answer to Question \ref{Spring-question} is affirmative when $R$ is a complete discrete valuation ring containing $1/2$. This establishes in particular a local version of Theorem \ref{intro}.  In \S\;\ref{sec3},  we prove the most technical results that are needed in the proof of the main theorem, including reduction formulas for sets of transporters (see below for definition) between two lattices and norm principles for local integral spinor norms. We review the adelic language and the genus theory in \S\;\ref{sec4.1}, and we use the notions of genus and spinor genus to interpret lattice classes and spinor genera in which a given lattice can be embedded as elements of certain abelian 2-torsion groups in \S\;\ref{sec4.2}. Based on the local results obtained in \S\S\;\ref{sec3} and \ref{sec4}, we prove an adelic norm principle in \S\;\ref{norm}, and then, in \S\;\ref{sec5.2}, we use it to prove a Springer-type theorem in terms of spinor genera and deduce Theorem \ref{intro} by using strong approximation for spin groups. Finally, in \S\;\ref{sec5.3} we give some examples to explain that the assumptions in Theorem \ref{intro} cannot be dropped in general.

\bigskip

\noindent {\bf Notation and terminology.} As we have mentioned before, $F$ denotes a field of characteristic $\neq2$ and $R$ denotes a Dedekind domain with fraction field $F$. Notation and terminology for quadratic spaces and quadratic lattices are standard if not explained, and generally follow those used in O'Meara's book \cite{O'M00}.
Unless otherwise stated, quadratic spaces and lattices under our consideration are always assumed to be non-degenerate.

For a quadratic space $(V,Q)$ over $F$, let  $\langle\cdot,\cdot\rangle:  V\times V\to F$ denote the symmetric bilinear form associated to the quadratic form $Q$, so that $\langle x,x\rangle=Q(x)$ for all $x\in V$.  Let $\mathrm{O}(V)$ and $\mathrm{O}^+(V)$ denote the orthogonal group and the special orthogonal group of $(V,Q)$ respectively. For any vector $u\in V$ with $Q(u)\neq 0$, the \emph{reflection} along $u$ is the map $\tau_u: V \to V$ given by
\[\tau_u(x)=x-\frac{2 \langle x, u \rangle }{Q(u)}u, \ \ \text{for all }  x\in V.
 \]We have $\tau_u\in \mathrm{O}(V)$. The well known Cartan--Dieudonn\'e theorem (cf. \cite[43:3. Theorem]{O'M00}) asserts that the group $\mathrm{O}(V)$ is generated by reflections (i.e., elements of the form $\tau_u$).

If $W$ is a (non-degenerate) subspace of $V$, one has $V=W\perp W^{\perp}$. Then  $\mathrm{O}^+(W)$ and $\mathrm{O}(W)$  can be viewed as subgroups of $\mathrm{O}^+(V)$ and $\mathrm{O}(V)$ respectively, via the identifications
$$\mathrm{O}^+(W) = \{\sigma\in \mathrm{O}^+(V): \sigma|_{W^{\perp}}=\mathrm{id}\} \ \ \ \text{and} \ \ \  \mathrm{O}(W) = \{\sigma\in \mathrm{O}(V): \sigma|_{W^{\perp}}=\mathrm{id}\}. $$

 For a quadratic $R$-lattice $M$ with $V=F\otimes_RM$, its orthogonal group $\mathrm{O}(M)$ and special orthogonal group $\mathrm{O}^+(M)$ are defined by
\[
\mathrm{O}(M)=\{\sigma\in\mathrm{O}(V)\,|\,\sigma(M)=M\} \quad\text{and}\quad \mathrm{O}^+(M)=\mathrm{O}(M)\cap\mathrm{O}^+(V).
\]If $N$ is a quadratic lattice in $V=F\otimes_RM$, we define
\[
 X(M/N)= \{ \sigma \in \mathrm{O}(V)\,|\, N \subseteq \sigma (M) \} \quad \text{and} \quad X^+(M/N)= \{ \sigma \in \mathrm{O}^+(V)\,|\, N \subseteq \sigma (M) \}.
\]For convenience, let us call elements of $X(M/N)$ \emph{transporters} of $N$ in $M$ and call elements of $X^+(M/N)$ \emph{proper transporters} of $N$ in $M$.

The \emph{scale} of a quadratic $R$-lattice $M$ is denoted by $\mathfrak{s}(M)$. It is the fractional ideal of $R$ generated by the elements $\langle x, y  \rangle$ for all $x,y\in M$. For a fractional ideal $\mathfrak{a}$ of $R$, the $R$-module $\mathfrak{a} \otimes_R M$ equipped with the quadratic form of the space $F\otimes_RM$ is also a quadratic lattice over $R$. When $\mathfrak{a}=\alpha R$ is principal, we simply write $\alpha M$ for $\mathfrak{a} \otimes_R M$.

We say that $M$ is \emph{modular} if $M=\mathfrak{s}(M)\otimes_RM^{\#}$, where
$$ M^{\#}= \{ x\in F\otimes_RM\,|\,   \langle x, M \rangle \subseteq R \}$$
is the dual lattice of $M$. If $M$ is modular with $\mathfrak{s}(M)=R$, then it is called \emph{unimodular}.

For a commutative ring $A$, its  group of units is denoted by $A^\times$.

\section{Some applications of O'Meara's local embedding criterion}\label{loc}
	
Throughout this section, we assume that $R$ is a complete discrete valuation ring in which $2$ is invertible.

We will use the well known theory of Jordan splittings for quadratic lattices over $R$ as presented in \cite[Chapter IX]{O'M00}.  In particular, any quadratic lattice $L$ has a Jordan splitting, i.e., an orthogonal splitting
$L=L_1\perp\cdots\perp L_t$ where $L_1, \cdots, L_t$ are modular lattices with $\mathfrak{s}(L_1)\supset\cdots\supset\mathfrak{s}(L_t) $.
Moreover, the number $t$ and the ordered sequences
\[
\big(\rank(L_1), \cdots, \rank(L_t)\big)\quad\text{ and }\quad \big(\mathfrak{s}(L_1), \cdots, \mathfrak{s}(L_t)\big)
\] are uniquely determined by $L$. (In fact, the proofs of the above statements given in \cite[\S\S\;91 and 92]{O'M00} are valid without assuming the finiteness of the residue field of $R$.)
	
\subsection{Embedding criterion and integral Springer theorem in the local case}

For a quadratic lattice $L$ with a Jordan splitting $L=L_1\perp\cdots\perp L_t$, we write
$$
L_{\leq i}\coloneqq\mathop{\mathlarger{\mathlarger{\perp}}}\limits_{\mathfrak s(L_r)\supseteq \mathfrak p^i}L_r\quad \text{ for each } i\in \mathbb{Z},
$$ where $\mathfrak p$ is the maximal ideal of $R$. (The notation used in \cite{O'M58} is $\mathfrak{L}_i$.)

\begin{thm}[O'Meara]\label{localrep}
Let $\ell$ and $L$ be quadratic lattices over $R$ with given Jordan splittings
$$\ell=\ell_1\perp\cdots\perp\ell_s \ \ \ \text{ and } \ \ \ L=L_1\perp\cdots\perp L_t . $$

Then $\ell$ can be embedded into $L$ over $R$ if and only if $F\otimes_R \ell_{\leq i}$ can be embedded into $F\otimes_R L_{\leq i}$ as quadratic spaces over $F$ for all $i\in \mathbb Z$.
\end{thm}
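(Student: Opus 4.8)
\emph{Necessity} is the easier direction, and I would establish it by hand. Given an embedding $\sigma\colon\ell\hookrightarrow L$ and an integer $i$, regroup the Jordan splitting of $L$ by scale as $L=L_{\leq i}\perp L'$, where every Jordan constituent of $L'$ has scale $\subseteq\mathfrak p^{i+1}$, and let $\pi\colon L\to L_{\leq i}$ be the orthogonal projection. I would then study $\rho:=\pi\circ\sigma|_{\ell_{\leq i}}\colon\ell_{\leq i}\to L_{\leq i}$. Writing $\sigma(x)=\rho(x)+x'$ with $x'\in L'$, orthogonality of $L_{\leq i}$ and $L'$ gives $\langle\rho(x),\rho(y)\rangle-\langle x,y\rangle=-\langle x',y'\rangle\in\mathfrak s(L')\subseteq\mathfrak p^{i+1}$ for all $x,y\in\ell_{\leq i}$; moreover $\rho$ is injective, since $\rho(x)=0$ forces $\langle x,\ell_{\leq i}\rangle\subseteq\mathfrak s(L')\subseteq\mathfrak p^{i+1}$, hence $x\in\mathfrak p^{i+1}(\ell_{\leq i})^{\#}\subseteq\mathfrak p\,\ell_{\leq i}$ because every Jordan constituent of $\ell_{\leq i}$ has scale $\supseteq\mathfrak p^i$, and then purity of $\ker\rho$ over the discrete valuation ring together with Nakayama gives $\ker\rho=0$. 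Consequently, in a basis of $\ell_{\leq i}$ adapted to its Jordan splitting, the Gram matrix of the sublattice $\rho(\ell_{\leq i})\subseteq L_{\leq i}$ differs from that of $\ell_{\leq i}$ only by a matrix with entries in $\mathfrak p^{i+1}$, a perturbation negligible compared with every scale occurring in $\ell_{\leq i}$; using the Jordan-splitting theory of \cite[\S\S\;91, 92]{O'M00} together with Hensel's lemma (legitimate since $2\in R^\times$), one concludes $\rho(\ell_{\leq i})\cong\ell_{\leq i}$ as quadratic $R$-lattices. As $\rho(\ell_{\leq i})\subseteq L_{\leq i}$, this exhibits $F\otimes_R\ell_{\leq i}$ as represented by $F\otimes_R L_{\leq i}$.

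For \emph{sufficiency} — the substantial direction — assume $F\otimes_R\ell_{\leq i}$ is represented by $F\otimes_R L_{\leq i}$ for every $i$; taking $i\gg0$ already gives $F\otimes_R\ell\hookrightarrow F\otimes_R L$. I would argue by induction on the number $s$ of Jordan constituents of $\ell$. Write $\ell=\ell_1\perp\ell'$ with $\ell'=\ell_2\perp\cdots\perp\ell_s$, where $\ell_1$ is the constituent of largest scale, say $\mathfrak s(\ell_1)=\mathfrak p^a$, so that $\ell_{\leq a}=\ell_1$ and the hypothesis at $i=a$ says that $F\otimes_R\ell_1$ is represented by $F\otimes_R L_{\leq a}$. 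The key step is a lemma asserting that this \emph{rational} representation can be realized by an \emph{integral} embedding $\sigma_1\colon\ell_1\hookrightarrow L$ with $\sigma_1(\ell_1)\subseteq L_{\leq a}$ whose image is so placed that the orthogonal-complement lattice $L^{(1)}:=L\cap\sigma_1(\ell_1)^{\perp}$ satisfies
\[
\sigma_1\bigl(F\otimes_R\ell_1\bigr)^{\perp}\cap\bigl(F\otimes_R L_{\leq i}\bigr)\ \subseteq\ F\otimes_R(L^{(1)})_{\leq i}\qquad\text{for every }i.
\]
Granting this, for $i\geq a$ one has $\ell_{\leq i}=\ell_1\perp(\ell')_{\leq i}$, so $F\otimes_R\ell_1\perp F\otimes_R(\ell')_{\leq i}$ is represented by $F\otimes_R L_{\leq i}$; since $F\otimes_R\ell_1$ is represented by $F\otimes_R L_{\leq i}$ via $\sigma_1$, Witt cancellation shows $F\otimes_R(\ell')_{\leq i}$ is represented by $\sigma_1(F\otimes_R\ell_1)^{\perp}\cap(F\otimes_R L_{\leq i})$, hence by $F\otimes_R(L^{(1)})_{\leq i}$ (for $i<a$ one has $(\ell')_{\leq i}=0$ and nothing to prove). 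Thus $(\ell',L^{(1)})$ again satisfies the hypotheses of the theorem, so by induction $\ell'\hookrightarrow L^{(1)}$; combining this with $\sigma_1$, and using that $L^{(1)}$ is orthogonal to $\sigma_1(\ell_1)$, produces an embedding $\ell=\ell_1\perp\ell'\hookrightarrow L$. The base case $s=1$ is exactly the instance of the lemma in which one only needs $\sigma_1$ to exist.

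The step I expect to be the main obstacle is the lemma itself: controlling the Jordan structure of the orthogonal complement $L^{(1)}$ of a well-chosen modular sublattice $\sigma_1(\ell_1)$ of $L$. When $\dim(F\otimes_R L_{\leq a})>\dim(F\otimes_R\ell_1)$ one can hope to choose $\sigma_1(\ell_1)$ to be a modular component split off from $L_{\leq a}$, so that $L^{(1)}$ is literally an orthogonal summand of $L$ and the displayed inclusion is transparent; but in the tight case $\dim(F\otimes_R\ell_1)=\dim(F\otimes_R L_{\leq a})$ the image need not be a direct summand of $L_{\leq a}$, and a more delicate argument is required to select $\sigma_1$ among all integral embeddings realizing the given rational one. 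This is the technical heart of \cite{O'M58}: one must work out precisely how the Jordan invariants of the orthogonal complement of a modular sublattice relate to those of the ambient lattice, which I would do through a sequence of preliminary lemmas obtained by repeatedly splitting off modular components, completing squares, and invoking Hensel's lemma (again using that $2$ is a unit). Once these are in place, the induction above assembles the proof.
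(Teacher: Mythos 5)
The paper does not actually prove this theorem; it simply cites \cite{O'M58} (p.~850, Theorem~1) and observes that O'Meara's argument does not use the finiteness of the residue field. Your reconstruction therefore has to stand on its own, and as it stands it is an outline of a proof rather than a proof: you correctly locate the hard step and then defer it, so your own assessment (``the step I expect to be the main obstacle is the lemma itself'') is accurate.

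For necessity your scheme is sound. The projection $\rho$ onto $L_{\leq i}$ is the right idea, your injectivity argument via $\mathfrak p^{i+1}(\ell_{\leq i})^{\#}\subseteq\mathfrak p\,\ell_{\leq i}$, purity of the kernel, and Nakayama is correct, and the Gram matrix of $\rho(\ell_{\leq i})$ does match that of $\ell_{\leq i}$ modulo $\mathfrak p^{i+1}$. But the conclusion ``hence $\rho(\ell_{\leq i})\cong\ell_{\leq i}$'' is a genuine perturbation lemma, not a one-line consequence of Hensel's lemma: one has to block-diagonalize the perturbed Gram matrix and check that neither the Jordan scales nor the square classes of the resulting diagonal blocks move. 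This does hold, precisely because $\mathfrak p^{i+1}$ is strictly contained in every Jordan scale of $\ell_{\leq i}$ and because $2\in R^\times$ lets Hensel's lemma absorb the unit-square corrections, but it should be stated and proved rather than invoked.

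For sufficiency there is a real gap. Your inductive framework is reasonable, but the whole weight rests on the ``key lemma,'' which you formulate and then explicitly leave unproved. As written, that lemma hides two separate, substantial problems: (i) upgrading the rational representation $F\otimes\ell_1\hookrightarrow F\otimes L_{\leq a}$ to an integral embedding $\sigma_1\colon\ell_1\hookrightarrow L$ — this is already the base case $s=1$ of the theorem and cannot be presupposed; and (ii) choosing $\sigma_1$ so that $L^{(1)}=L\cap\sigma_1(\ell_1)^\perp$ admits a Jordan splitting making $\sigma_1(F\otimes\ell_1)^\perp\cap(F\otimes L_{\leq i})\subseteq F\otimes(L^{(1)})_{\leq i}$ hold for all $i$ — and the right-hand subspace depends on the chosen Jordan splitting of $L^{(1)}$, so the lemma must produce that splitting as part of the output. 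You rightly flag that the delicate case is when $\ell_1$ has strictly smaller scale than $L_{\leq a}$ and so $\sigma_1(\ell_1)$ need not split off, but identifying where the difficulty sits is not the same as resolving it. Until points (i) and (ii) are established — which is essentially the content of \cite{O'M58} — the proposal does not constitute a proof of the theorem.
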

\begin{proof}
See {\cite[p.850. Thm.1]{O'M58}}. Indeed, O'Meara's paper \cite{O'M58} only considered the case where the residue field of $R$ is finite. However, the proof of this theorem does not depend on this finiteness assumption and is still valid in our case.
\end{proof}

As an immediate consequence of Theorem \ref{localrep}, we obtain the following local integral Springer theorem (where the rank of $M$ can be $2$).
	
\begin{thm}\label{local-springer}
Question \ref{Spring-question} has a positive answer for the complete discrete valuation ring $R$ (where $2$ is invertible). That is, for any quadratic lattices $M$ and $N$ over $R$,  $N$ can be embedded into $M$ over $R$ if and only if  $S\otimes_{R}N$ can be embedded into $S\otimes_{R}N$ over $S$, where $S$ is the integral closure of $R$ in a finite extension $E/F$ of odd degree.
\end{thm}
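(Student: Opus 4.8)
The plan is to derive the theorem from O'Meara's embedding criterion (Theorem~\ref{localrep}), used both over $R$ and over $S$, combined with the classical relative form of Springer's theorem for quadratic spaces over fields (\cite{Spr52}; see the discussion in the Introduction). The ``only if'' direction is trivial, since applying $S\otimes_R-$ to an embedding $N\hookrightarrow M$ over $R$ produces an embedding $S\otimes_RN\hookrightarrow S\otimes_RM$ over $S$. For the converse, assume that $S\otimes_RN$ embeds into $S\otimes_RM$ over $S$. First I would record some facts about $S$: since $R$ is complete, hence Henselian, the discrete valuation of $F$ extends uniquely to $E$, and the integral closure $S$ of $R$ in $E$ is the corresponding valuation ring; it is again a complete discrete valuation ring, and $2\in R^\times\subseteq S^\times$. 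Write $\mathfrak p$ and $\mathfrak P$ for the maximal ideals of $R$ and $S$, and let $e\geq 1$ be the integer with $\mathfrak pS=\mathfrak P^{e}$. In particular $S\otimes_RN$ and $S\otimes_RM$ are quadratic lattices over the complete discrete valuation ring $S$, so Theorem~\ref{localrep} is available for them over $S$.

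The crux is the behaviour of Jordan splittings, and of the sublattices $(\,\cdot\,)_{\leq i}$, under the base change $R\to S$. Fix Jordan splittings $N=N_1\perp\cdots\perp N_s$ and $M=M_1\perp\cdots\perp M_t$ over $R$, with $\mathfrak s(N_r)=\mathfrak p^{\,b_r}$ and $\mathfrak s(M_r)=\mathfrak p^{\,c_r}$, so that $b_1<\cdots<b_s$ and $c_1<\cdots<c_t$. Since $2\in R^\times$, every quadratic lattice over $R$ is diagonalizable, and any diagonalization of a $\mathfrak p^{\,a}$-modular lattice has all its diagonal entries of $\mathfrak p$-adic valuation exactly $a$; hence $S\otimes_RN_r$ admits a diagonalization over $S$ whose diagonal entries all have $\mathfrak P$-adic valuation $e\,b_r$, so $S\otimes_RN_r$ is $\mathfrak P^{\,e b_r}$-modular, and similarly $S\otimes_RM_r$ is $\mathfrak P^{\,e c_r}$-modular. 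As $e b_1<\cdots<e b_s$ and $e c_1<\cdots<e c_t$, the orthogonal splittings $S\otimes_RN=(S\otimes_RN_1)\perp\cdots\perp(S\otimes_RN_s)$ and $S\otimes_RM=(S\otimes_RM_1)\perp\cdots\perp(S\otimes_RM_t)$ are Jordan splittings over $S$. Unwinding the definition of $(\,\cdot\,)_{\leq i}$ with respect to these splittings yields $(S\otimes_RN)_{\leq e i}=S\otimes_RN_{\leq i}$ and $(S\otimes_RM)_{\leq e i}=S\otimes_RM_{\leq i}$ for every $i\in\mathbb Z$, and hence $E\otimes_S(S\otimes_RN)_{\leq e i}=E\otimes_F(F\otimes_RN_{\leq i})$ as quadratic spaces over $E$, and likewise for $M$.

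Finally I would chain the ingredients. By Theorem~\ref{localrep} applied to $S\otimes_RN$ and $S\otimes_RM$ over $S$, the hypothesis says that $E\otimes_S(S\otimes_RN)_{\leq k}$ is represented by $E\otimes_S(S\otimes_RM)_{\leq k}$ over $E$ for all $k\in\mathbb Z$; taking $k=e i$, we obtain that $E\otimes_F(F\otimes_RN_{\leq i})$ is represented by $E\otimes_F(F\otimes_RM_{\leq i})$ over $E$ for every $i\in\mathbb Z$. Because $[E:F]$ is odd, the relative Springer theorem over fields then gives that $F\otimes_RN_{\leq i}$ is represented by $F\otimes_RM_{\leq i}$ over $F$ for every $i\in\mathbb Z$, and applying Theorem~\ref{localrep} once more, now to $N$ and $M$ over $R$, we conclude that $N$ embeds into $M$ over $R$. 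I expect the only genuinely delicate point to be the compatibility of Jordan splittings --- and hence of the sublattices $L_{\leq i}$ --- with the base change $R\to S$; this is also where the hypothesis $1/2\in R$ really enters, through diagonalizability of modular lattices. Apart from that the argument is purely formal, and, in contrast with the global statement, it involves no approximation or genus theory.
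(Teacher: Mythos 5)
Your proof is correct, and it takes essentially the same route as the paper's: base-change a Jordan splitting of $M$ and $N$ to $S$, apply O'Meara's embedding criterion (Theorem~\ref{localrep}) over $S$, invoke the classical relative Springer theorem over fields, and then apply the criterion once more over $R$. The one place you are more explicit than the paper is in verifying the compatibility of the sublattices $L_{\leq i}$ under base change: the paper tacitly reindexes, whereas you spell out that $(S\otimes_RL)_{\leq ei}=S\otimes_RL_{\leq i}$ (and that for $i$ not a multiple of $e$ nothing new appears, since the scales of the components of $S\otimes_RL$ all occur in multiples of the ramification index $e$). This is a genuine, if minor, clarification of the published argument.
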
 	
\begin{proof}  Let
$$ M=M_1\perp \cdots\perp M_s \ \ \ \text{and} \ \ \ N=N_1\perp \cdots \perp N_t $$ be Jordan splittings of $M$ and $N$ respectively. Then
$$ S\otimes_R M= (S\otimes_R M_1) \perp \cdots \perp (S\otimes_R M_s) \ \ \ \text{and} \ \ \ S\otimes_R N = (S\otimes_R N_1) \perp \cdots \perp (S\otimes_R N_t) $$
are Jordan splittings of $ S\otimes_R M$ and $ S\otimes_R N $ respectively. By Theorem \ref{localrep}, $S\otimes_RN$ can be embedded into $S\otimes_{R}M$ over $S$ if and only if $E\otimes_S(S\otimes_RN)_{\leq i}$ can be embedded into $E\otimes_S(S\otimes_RM)_{\leq i}$ as quadratic spaces over $E$ for all $i\in \mathbb Z$. By the classical Springer theorem in \cite{Spr52}, the latter condition is equivalent to saying that  $F\otimes_R N_{\leq i}$ can be embedded into $F\otimes_R M_{\leq i}$ for all $i\in \mathbb Z$. So we get the desired result by applying Theorem \ref{localrep} once again.
\end{proof}

\subsection{Uniqueness of Jordan splittings and cancellation law}

The results in this subsection are proved in \cite[92:2a and 92:3]{O'M00} under the extra assumption that $R$ has finite residue field.
In our case the residue field of $R$ can be arbitrary, and our proofs are based on Theorem \ref{localrep}.

\begin{coro}\label{latticeiso}
Let $L$ and $K$ be modular quadratic lattices over $R$ with the same scale. Then $L\cong K$ as quadratic lattices if and only if $F\otimes_R L\cong F\otimes_R K$ as quadratic spaces.
\end{coro}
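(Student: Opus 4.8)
The implication $L\cong K\Rightarrow F\otimes_RL\cong F\otimes_RK$ is trivial, since an isometry of lattices extends $F$-linearly to an isometry of the ambient quadratic spaces. So I concentrate on the converse, assuming $F\otimes_RL\cong F\otimes_RK$. The plan is to produce mutual embeddings via O'Meara's criterion (Theorem \ref{localrep}) and then upgrade them to a genuine isometry by a determinant argument.

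Since $L$ and $K$ are modular with the same scale, say $\mathfrak s(L)=\mathfrak s(K)=\mathfrak p^{m}$, each of them is its own Jordan splitting (a single block), and for every $i\in\mathbb Z$ one has $L_{\leq i}=L$ and $K_{\leq i}=K$ when $i\geq m$, while $L_{\leq i}=K_{\leq i}=0$ when $i<m$. Hence the hypothesis $F\otimes_RL\cong F\otimes_RK$ says exactly that $F\otimes_RL_{\leq i}$ embeds into $F\otimes_RK_{\leq i}$ — and conversely — for all $i\in\mathbb Z$. Applying Theorem \ref{localrep} twice, once with $(\ell,L)=(L,K)$ and once with $(\ell,L)=(K,L)$, yields embeddings $\sigma\colon L\hookrightarrow K$ and $\tau\colon K\hookrightarrow L$ over $R$.

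It remains to deduce $L\cong K$ from the existence of $\sigma$ and $\tau$. Extending scalars to $F$, the maps $\sigma$ and $\tau$ induce isometric embeddings of quadratic spaces which, being maps between $F$-spaces of the same dimension, are isomorphisms $\bar\sigma\colon F\otimes_RL\xrightarrow{\ \sim\ }F\otimes_RK$ and $\bar\tau\colon F\otimes_RK\xrightarrow{\ \sim\ }F\otimes_RL$. Thus $\phi\coloneqq\bar\tau\circ\bar\sigma\in\mathrm{O}(F\otimes_RL)$ and $\phi(L)=\tau(\sigma(L))\subseteq\tau(K)\subseteq L$. Choosing an $R$-basis of the free $R$-module $L$ (free because $R$ is a discrete valuation ring), the matrix of $\phi$ with respect to this basis has entries in $R$ since $\phi(L)\subseteq L$, and determinant $\pm1$ since $\phi\in\mathrm{O}(F\otimes_RL)$; hence it lies in $\mathrm{GL}_n(R)$ with $n=\rank(L)$, so $\phi(L)=L$. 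Combining this with $L=\phi(L)=\tau(\sigma(L))\subseteq\tau(K)\subseteq L$ forces $\tau(K)=L$, so the injective isometry $\tau\colon K\to L$ is surjective, i.e.\ an isometry of quadratic lattices.

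The only step requiring any thought is the last one, passing from two one-sided embeddings to a two-sided isometry — essentially the cancellation phenomenon behind the corollary — and the determinant computation above disposes of it cleanly. Everything else is a direct bookkeeping application of Theorem \ref{localrep}, once one observes that a modular lattice of scale $\mathfrak p^{m}$ is concentrated in a single Jordan degree.
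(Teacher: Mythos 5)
Your proof is correct, but it takes a genuinely different route from the paper's. The paper produces a single embedding $\sigma\colon L\hookrightarrow K$ via Theorem~\ref{localrep}, then invokes O'Meara's structural result {\cite[82:15a]{O'M00}} (a modular sublattice whose scale equals that of the ambient lattice splits off as an orthogonal summand) and concludes $\sigma(L)=K$ by a rank count. You instead construct both embeddings $\sigma\colon L\hookrightarrow K$ and $\tau\colon K\hookrightarrow L$, form the composite $\phi=\bar\tau\circ\bar\sigma\in\mathrm O(F\otimes_RL)$ satisfying $\phi(L)\subseteq L$, and observe that since $\det\phi=\pm1\in R^\times$ and the matrix of $\phi$ in an $R$-basis of $L$ has entries in $R$, it lies in $\mathrm{GL}_n(R)$, forcing $\phi(L)=L$ and hence $\tau(K)=L$. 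Your argument avoids 82:15a entirely at the cost of applying O'Meara's embedding criterion twice; it is a pleasant Cantor--Schr\"oder--Bernstein-style "mutual embeddings of equal rank force isomorphism" argument that would work for \emph{any} pair of lattices of the same rank over a DVR admitting embeddings in both directions, whereas the paper's proof exploits modularity more directly to get by with a single embedding. Both are valid; your bookkeeping about $L_{\leq i}$ collapsing to $L$ or $0$ for a modular lattice is also correct.
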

\begin{proof}
If $F\otimes_RL\cong F\otimes_RK$, then there is an embedding $\sigma: L\to K$ by Theorem \ref{localrep}. Since $\sigma(L)$ is a modular sublattice of $K$ with the same scale, one obtains that $\sigma(L)$ splits $K$ by \cite[82:15a]{O'M00}. Since $L$ and $K$ have the same rank, one concludes that $\sigma(L)=K$, as desired.
\end{proof}

\begin{coro}\label{uniqueness}
For any  quadratic lattice $L$ over $R$, the Jordan splittings of $L$ are unique up to isomorphism.
\end{coro}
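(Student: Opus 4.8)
The plan is to reduce the statement, via Corollary \ref{latticeiso}, to a statement about quadratic spaces over $F$, and then to extract the isometry class of each Jordan component from the chain of spaces $F\otimes_R L_{\leq i}$ by means of Witt cancellation.

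First I would fix two Jordan splittings $L=L_1\perp\cdots\perp L_t$ and $L=K_1\perp\cdots\perp K_t$ of the same lattice. As recalled above, the integer $t$ and the sequences $\big(\rank(L_r)\big)_r$ and $\big(\mathfrak{s}(L_r)\big)_r$ are already known to be intrinsic to $L$, so $\rank(L_r)=\rank(K_r)$ and $\mathfrak{s}(L_r)=\mathfrak{s}(K_r)=:\mathfrak{p}^{e_r}$ for all $r$, with $e_1<\cdots<e_t$. The first key step is to show that $F\otimes_R L_{\leq i}\cong F\otimes_R K_{\leq i}$ as quadratic spaces for every $i\in\mathbb{Z}$. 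Applying Theorem \ref{localrep} to the identity embedding of $L$ into itself — viewing the source with the splitting $(L_r)$ and the target with the splitting $(K_r)$ — gives an embedding $F\otimes_R L_{\leq i}\hookrightarrow F\otimes_R K_{\leq i}$ for all $i$; interchanging the roles of the two splittings gives the reverse embedding. Since the two spaces have the same dimension $\sum_{e_r\leq i}\rank(L_r)$, any such embedding is bijective, hence an isometry, which yields the claimed isomorphism.

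Next, for a fixed index $r$, observe that because the $e_r$ are pairwise distinct one has $L_{\leq e_r}=L_{\leq e_r-1}\perp L_r$ and likewise $K_{\leq e_r}=K_{\leq e_r-1}\perp K_r$ (here the equality $\mathfrak{s}(K_r)=\mathfrak{p}^{e_r}$ is used). Tensoring with $F$ and combining with the isometries from the previous step yields
\[
(F\otimes_R L_{\leq e_r-1})\perp(F\otimes_R L_r)\;\cong\;(F\otimes_R L_{\leq e_r-1})\perp(F\otimes_R K_r),
\]
after identifying $F\otimes_R L_{\leq e_r-1}\cong F\otimes_R K_{\leq e_r-1}$. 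By Witt cancellation over $F$ this forces $F\otimes_R L_r\cong F\otimes_R K_r$. Since $L_r$ and $K_r$ are modular lattices with the same scale $\mathfrak{p}^{e_r}$, Corollary \ref{latticeiso} then gives $L_r\cong K_r$, and doing this for every $r$ proves the corollary.

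Once Theorem \ref{localrep} is available the argument is essentially bookkeeping; the only points demanding a little care are, in the first step, the use of the already-established uniqueness of the ranks of the Jordan components to upgrade the mutual embeddings of spaces produced by Theorem \ref{localrep} into an isomorphism, and, at the end, feeding the correct (equal) scale data into Corollary \ref{latticeiso}.
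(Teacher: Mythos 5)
Your proof is correct and follows essentially the same approach as the paper's: apply Theorem~\ref{localrep} to the identity map of $L$ to get embeddings of the partial sums $F\otimes_R L_{\leq i}\hookrightarrow F\otimes_R K_{\leq i}$, upgrade these to isomorphisms by the dimension count, peel off the components by Witt cancellation over $F$, and conclude by Corollary~\ref{latticeiso}. The only cosmetic difference is that you also invoke the reverse embedding, which is redundant once the dimensions match, and you phrase the cancellation step in terms of $L_{\leq e_r}=L_{\leq e_r-1}\perp L_r$ rather than the partial sums $L_1\perp\cdots\perp L_i$; these are the same objects, so the arguments coincide.
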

\begin{proof}
Let $$L=L_1\perp \cdots \perp L_t \quad\text{and} \quad L=K_1\perp\cdots\perp K_t$$ be two Jordan splittings of $L$. Then $\mathfrak{s}(L_i)=\mathfrak{s}(K_i)$ and $\rank(L_i)=\rank(K_i)$ for $1\leq i\leq t$, by \cite[92:2. Theorem]{O'M00}. By Theorem \ref{localrep},  there are embeddings
 $$F\otimes_RL_1\perp\cdots\perp F\otimes_RL_i\longrightarrow F\otimes_RK_1\perp\cdots\perp F\otimes_RK_i$$ for $1\leq i\leq t$. Comparing the dimensions, we obtain $F\otimes_RL_1\perp\cdots\perp F\otimes_RL_i\cong F\otimes_RK_1\perp\cdots\perp F\otimes_RK_i$ for each $i$. By the Witt cancellation theorem for quadratic spaces (\cite[42:16. Theorem]{O'M00}), we have $F\otimes_RL_i\cong F\otimes_RK_i$ for each $i$. Hence $L_i\cong K_i$ for each $i$, by Corollary \ref{latticeiso}.
\end{proof}

\begin{coro}[Cancellation Law]\label{cancellation}
Suppose given orthogonal splittings of quadratic lattices $L=L'\perp L''$ and $K=K'\perp K''$ over $R$.

If $ L'\cong K' $ and $L\cong K$,  then $L''\cong K''$.
\end{coro}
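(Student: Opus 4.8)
The plan is to reduce the cancellation statement to the case of \emph{modular} lattices of a single fixed scale, where Corollary~\ref{latticeiso} converts lattice isomorphism into isometry of the associated $F$-quadratic spaces, and ordinary Witt cancellation over $F$ (\cite[42:16. Theorem]{O'M00}) then applies.

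First I would fix a Jordan splitting of $L'$ and one of $L''$. For each $i\in\mathbb Z$, let $A_i$ denote the $\mathfrak{p}^i$-modular component of the chosen Jordan splitting of $L'$, with the convention that $A_i$ is the zero lattice when no component of that scale occurs, and define $B_i$, $A_i'$, $B_i'$ analogously for $L''$, $K'$, $K''$. Since the orthogonal sum of two $\mathfrak{p}^i$-modular lattices is again $\mathfrak{p}^i$-modular, regrouping according to scale the orthogonal summands coming from the fixed Jordan splittings of $L'$ and $L''$ shows that $L=\perp_i(A_i\perp B_i)$ --- discarding zero summands and ordering by decreasing scale --- is a Jordan splitting of $L$; likewise $K=\perp_i(A_i'\perp B_i')$ is a Jordan splitting of $K$. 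This grouping is the only point that needs to be checked with a little care.

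Then I would invoke uniqueness of Jordan splittings (Corollary~\ref{uniqueness}) twice: from $L'\cong K'$ it gives $A_i\cong A_i'$ for all $i$, and from $L\cong K$ it gives $A_i\perp B_i\cong A_i'\perp B_i'$ for all $i$. Now fix $i$. If $A_i\perp B_i=0$ then $A_i'\perp B_i'=0$ by comparison of ranks, so $B_i=B_i'=0$; otherwise all of $A_i,B_i,A_i',B_i'$ are $\mathfrak{p}^i$-modular, and Corollary~\ref{latticeiso} turns the two isomorphisms above into isometries of quadratic spaces over $F$, namely $F\otimes_R(A_i\perp B_i)\cong F\otimes_R(A_i'\perp B_i')$ and $F\otimes_RA_i\cong F\otimes_RA_i'$. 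Witt cancellation over $F$ yields $F\otimes_RB_i\cong F\otimes_RB_i'$; comparing dimensions, $B_i$ and $B_i'$ are either both zero or both nonzero $\mathfrak{p}^i$-modular lattices, and in the latter case Corollary~\ref{latticeiso} once more gives $B_i\cong B_i'$. Taking the orthogonal sum over all $i$ produces $L''\cong K''$.

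I do not expect a genuine obstacle: once the reduction to a single modular scale is in place, the argument is essentially formal. The only subtle input --- and precisely where the standing hypotheses that $R$ is a complete discrete valuation ring with $2\in R^\times$ are really used --- is Corollary~\ref{latticeiso}, hence Theorem~\ref{localrep}. This is consistent with the fact that cancellation can fail for lattices over general Dedekind domains, and the reduction above isolates exactly where completeness enters.
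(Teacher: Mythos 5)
Your proof is correct and follows essentially the same strategy as the paper's: regroup the Jordan components of $L'$ and $L''$ (resp.\ $K'$ and $K''$) by scale to get Jordan splittings of $L$ and $K$, invoke uniqueness of Jordan splittings (Corollary~\ref{uniqueness}) to reduce to the modular case scale-by-scale, and then pass to the $F$-spaces via Corollary~\ref{latticeiso} so that Witt cancellation applies. The paper states this reduction in one line; your version spells it out with explicit notation and handles the degenerate cases (zero components) explicitly, but there is no substantive difference in the argument.
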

\begin{proof}
Since a Jordan splitting of $L$ (resp. $K$) can be obtained by grouping together Jordan splittings of $L'$ (resp. $K'$) and $L''$ (resp. $K''$), one only needs to prove the case where $L,L',L'',K,K',K''$ are all modular with the same scale, by Corollary \ref{uniqueness}. Now the assertion follows  immediately from Corollary \ref{latticeiso} and the Witt cancellation theorem for quadratic spaces.
\end{proof}

\section{Local analyses of isometries, transporters and integral spinor norms}\label{sec3}

As in the previous section, let $R$ be a complete discrete valuation ring (with fraction field $F$) and assume that  2 is invertible in $R$.
Let $\pi\in R$ be a uniformizer of $R$, and let $\mathrm{ord}: F\to \mathbb{Z}\cup\{\infty\}$ denote the normalized discrete valuation on $F$.

\subsection{Observations on local integral orthogonal groups}

We say that a binary quadratic $R$-lattice $H$ is  \emph{hyperbolic} if it is generated by two vectors $x,y\in H$ such that $Q(x)=Q(y)=0$ and $\langle x,y\rangle R=\mathfrak{s}(H)$.

\begin{prop} \label{hyper} For any modular quadratic lattice $L$ over $R$, the following statements are equivalent:

\begin{enumerate}
  \item[(1)] $L$ is not split by a  hyperbolic binary lattice.
  \item[(2)] $Q(x) R= \mathfrak s(L)$ for all $x\in L\setminus \pi L$.
  \item[(3)] For any  $x_0\in L$ such that $\mathfrak s(L)=Q(x_0) R$, the quadratic space $(L/\pi L,  \bar { Q } )$ over the residue field $R/\pi R$  is anisotropic, where
$\bar{Q} (\bar x) \coloneqq Q(x_0)^{-1} Q(x) \pmod{\pi}$ for all $x\in L$.
\item[(4)] The quadratic $F$-space $F\otimes_R L$ is anisotropic.
\end{enumerate}

\end{prop}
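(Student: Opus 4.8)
The plan is to prove the cycle of implications $(1)\Rightarrow(2)\Rightarrow(3)\Rightarrow(4)\Rightarrow(1)$, working throughout with a fixed $x_0\in L$ realizing $\mathfrak s(L)=Q(x_0)R$ (such an $x_0$ exists because $L$ modular forces $Q(L)R=\mathfrak s(L)$, which one checks by polarization since $2\in R^\times$). After rescaling we may assume $\mathfrak s(L)=R$, i.e.\ $L$ is unimodular, so $Q(x_0)\in R^\times$; statement (2) then reads $Q(x)\in R^\times$ for all $x\in L\setminus\pi L$, statement (3) reads that the reduction $\bar Q$ on $L/\pi L$ is anisotropic over the residue field $k=R/\pi R$, and statement (4) reads that $F\otimes_R L$ is anisotropic.

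For $(2)\Rightarrow(3)$: if $\bar Q(\bar x)=0$ for some $\bar x\neq 0$, lift to $x\in L\setminus\pi L$; then $Q(x)\in\pi R$, contradicting (2). For $(3)\Rightarrow(4)$: suppose $F\otimes_R L$ is isotropic, say $Q(v)=0$ with $0\neq v\in F\otimes_R L$; scaling $v$ by a power of $\pi$ we may take $v\in L\setminus\pi L$, and then $\bar Q(\bar v)=0$ with $\bar v\neq 0$, contradicting (3). For $(4)\Rightarrow(1)$: a hyperbolic binary lattice $H$ has $F\otimes_R H$ isotropic, so if $H$ splits $L$ then $F\otimes_R L$ is isotropic, contradicting (4). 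The implication requiring the most care is $(1)\Rightarrow(2)$: assume $L$ is not split by a hyperbolic binary lattice but that some $x\in L\setminus\pi L$ has $Q(x)\in\pi R$. I would argue by contradiction as follows. The vector $x$ is primitive in the unimodular lattice $L$, so there is $y\in L$ with $\langle x,y\rangle\in R^\times$; after subtracting a multiple of $x$ and using $2\in R^\times$ one can arrange $Q(y)=0$ (solve $Q(y-cx)=Q(y)-2c\langle x,y\rangle+c^2 Q(x)=0$ for $c\in R$; since $Q(x)\in\pi R$ and $2\langle x,y\rangle\in R^\times$ this is a Hensel-type equation with a solution), and then scale $y$ so that $\langle x,y\rangle=1$. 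Now $Rx+Ry$ is a unimodular binary sublattice with $Q(y)=0$ and $\langle x,y\rangle R=R=\mathfrak s(L)$. If also $Q(x)=0$ this sublattice is hyperbolic and, being unimodular, splits $L$ by \cite[82:15a]{O'M00}, a contradiction. If $Q(x)=\pi^a u$ with $a\geq 1$ and $u\in R^\times$, replace $x$ by $x'=x-\tfrac{Q(x)}{2}y$; then $Q(x')=Q(x)-Q(x)\langle x,y\rangle+\tfrac{Q(x)^2}{4}Q(y)=Q(x)-Q(x)=0$ while $\langle x',y\rangle=\langle x,y\rangle=1$ and $x'\in L\setminus\pi L$, so $Rx'+Ry$ is hyperbolic unimodular and again splits $L$, the desired contradiction.

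The main obstacle is this last step: making precise the passage from a primitive isotropic-modulo-$\pi$ vector to an honest hyperbolic binary splitting. The key tools are that $L$ is unimodular (so primitivity guarantees a dual vector, and any unimodular sublattice is orthogonally complemented by \cite[82:15a]{O'M00}), that $2\in R^\times$ (so the quadratic form is recovered from the bilinear form and completing a vector to an isotropic one reduces to a linear adjustment), and completeness of $R$ only if one prefers a Hensel's lemma phrasing — though as shown above an explicit completion of the square suffices and completeness is not actually needed for this proposition. I would also remark that the equivalence localizes the dichotomy ``$L$ has a hyperbolic summand'' versus ``$F\otimes_R L$ anisotropic'' for modular lattices, which is exactly the input needed later when analyzing Jordan constituents.
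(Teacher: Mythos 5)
Your proof is correct and follows essentially the same route as the paper's: the cycle $(1)\Rightarrow(2)\Rightarrow(3)\Rightarrow(4)\Rightarrow(1)$, with Hensel's lemma driving $(1)\Rightarrow(2)$, and the other three implications being short and identical to the paper's. The only cosmetic difference in $(1)\Rightarrow(2)$ is that you locate a ``dual'' vector $y$ via unimodularity in the abstract, whereas the paper picks an orthogonal basis $e_1,\dots,e_t$ and uses $e_1$ for that role; the subsequent Hensel adjustment followed by completing the square is the same computation in both.

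However, the closing remark --- that ``completeness is not actually needed for this proposition'' and that Hensel's lemma is dispensable --- is wrong, and the flaw is in your own argument: completing the square gives $Q(x')=0$ only \emph{after} you have arranged $Q(y)=0$, and that first arrangement is precisely where you (correctly) invoked Hensel to solve $Q(x)c^2-2\langle x,y\rangle c+Q(y)=0$ with leading coefficient in $\pi R$. More to the point, the proposition itself genuinely fails over a non-complete DVR. Take $R=\mathbb{Z}_{(3)}$, $\pi=3$, $F=\mathbb{Q}$, and let $L$ be the unimodular rank-$2$ lattice with Gram matrix $\begin{pmatrix}3&1\\1&2\end{pmatrix}$ (determinant $5\in R^\times$). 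The form $3s^2+2st+2t^2$ has discriminant $-20$, which is not a square in $\mathbb{Q}$, so $F\otimes_R L$ is anisotropic and (4), hence (1), holds. Yet $x=e_1\in L\setminus\pi L$ has $Q(x)=3$, so $Q(x)R=3R\neq R=\mathfrak{s}(L)$ and (2) fails. Over $\mathbb{Z}_3$ the discriminant becomes a square by Hensel and the lattice does split off a hyperbolic plane --- completeness is exactly what makes the two conditions agree.
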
	
\begin{proof} (1) $\Rightarrow$ (2).   By \cite[82:15]{O'M00}, one has $$L=Re_1\perp \cdots \perp Re_t \ \ \ \text{ with } \ \ \ Q(e_1)R=\cdots=Q(e_t)R=\mathfrak s(L) . $$
Suppose that there is a vector $x\in L\setminus \pi L$ such that $Q(x)\in \pi \mathfrak s(L)$. Write $x=\sum_{i=1}^t a_i e_i$ with $a_i\in R$. Since $x\notin \pi L$, we may assume without loss of generality that  $a_1\in R^\times$. Since $Q(x)Q(e_1)^{-1}\in \pi R$, by Hensel's lemma (see \cite[13:8]{O'M00}), there is $\xi\in R^\times$ such that
$Q(x) Q(e_1)^{-1}  \xi^2 +2 a_1 \xi +1 =0$. Then $y:=\xi x+e_1$ satisfies
\[
Q(y)=Q(\xi x+e_1)=Q(x)\xi^2+2\xi\langle x,e_1\rangle+Q(e_1)=Q(x) \xi^2 +2 a_1Q(e_1) \xi +Q(e_1) =0.
\]Moreover,
\[
\mathrm{ord}(Q(x) \xi +2 a_1Q(e_1))=\mathrm{ord}(Q(x)\xi^2+2a_1Q(e_1)\xi)=\mathrm{ord}(Q(e_1))<\mathrm{ord}(Q(x))=\mathrm{ord}(Q(x)\xi).
\]Hence
\[
\mathrm{ord}(2Q(x) \xi +2 a_1Q(e_1))=\mathrm{ord}(Q(x) \xi +2 a_1Q(e_1))<\mathrm{ord}(Q(x)\xi)=\mathrm{ord}(Q(x)).
\]This shows that
 \[
 \eta\coloneqq -2^{-1}\big(\xi Q(x)+a_1Q(e_1)\big)^{-1} Q(x) \in \pi R .
   \]Thus, the vector $z:=x+\eta y=(1+\eta\xi)x+\eta e_1$ satisfies
   \[
   Q(z)=Q(x)+2\eta\langle x,y\rangle+\eta^2Q(y)=Q(x)+2\eta(\xi Q(x)+a_1Q(e_1))=0
   \]and
   \[
   \langle y,z\rangle R=\langle y,x+\eta y\rangle R=(\xi Q(x)+a_1Q(e_1))R=Q(e_1)R=\mathfrak{s}(L).
   \]Hence, the sublattice $Ry+Rz$ is a hyperbolic binary lattice, and it splits $L$ by \cite[82:15]{O'M00}. A contradiction is derived.

(2) $\Rightarrow$ (3). Let $\bar x$ be any nonzero vector in $L/\pi L$, and choose a lifting $x\in L$ of $\bar x$. Then $x\in L\setminus \pi L$, so, by (2) we have
 $Q(x)R=\mathfrak s(L)=Q(x_0)R$. This implies $Q(x_0)^{-1} Q(x)\in R^\times$, and hence $\bar{Q}(\bar x)=Q(x_0)^{-1} Q(x)\mod \pi$ is nonzero in $R/\pi R$. One concludes that  $(L/\pi L,  \bar { Q } )$ is anisotropic over $R/\pi R$, as desired.

(3) $\Rightarrow$ (4).  Suppose that there is $y\in F\otimes_R L$ such that $Q(y)=0$. Then one can assume that $y\in L\setminus \pi L$. Then $\bar{y} \neq 0$ in $L/\pi L$ and $\bar{Q}(\bar{y})=0$. This is absurd when $(L/\pi L,  \bar { Q } )$ is anisotropic over $R/\pi R$.

(4) $\Rightarrow$ (1). This is obvious.
\end{proof} 	

We have the following analog of \cite[91:5]{O'M00}:

\begin{coro}\label{modular} If $L$ is a modular quadratic lattice over $R$ such that the quadratic space $F\otimes_R L$ over $F$ is anisotropic, then $\mathrm{O}(L)=\mathrm{O}(F\otimes_R L)$.
 \end{coro}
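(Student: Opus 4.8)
The plan is to show that every isometry $\sigma$ of the quadratic space $V := F\otimes_R L$ already preserves the lattice $L$. Since $\mathrm{O}(L) \subseteq \mathrm{O}(V)$ trivially, it suffices to prove the reverse inclusion, i.e., that $\sigma(L) = L$ for all $\sigma \in \mathrm{O}(V)$. The key observation is that $L$ can be characterized intrinsically inside $V$ in terms of the quadratic form alone, using Proposition \ref{hyper}.

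First I would use part (2) of Proposition \ref{hyper}: since $F\otimes_R L$ is anisotropic, we have $Q(x)R = \mathfrak{s}(L)$ for every $x \in L \setminus \pi L$, and consequently $\mathrm{ord}(Q(x)) = \mathrm{ord}(\mathfrak{s}(L))$ for every such $x$. For a general nonzero $x \in L$, writing $x = \pi^k y$ with $y \in L \setminus \pi L$ gives $\mathrm{ord}(Q(x)) = 2k + \mathrm{ord}(\mathfrak{s}(L))$, so $\mathrm{ord}(Q(x)) \geq \mathrm{ord}(\mathfrak{s}(L))$ for all $x \in L$, with equality exactly on $L \setminus \pi L$. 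Conversely, I claim $L = \{x \in V : \mathrm{ord}(Q(x)) \geq \mathrm{ord}(\mathfrak{s}(L))\}$. Indeed, if $x \in V \setminus L$, scaling by a power of $\pi$ we may assume $x \in L^{\#}_{\mathrm{prim}}$, more precisely we can arrange that $x \in (\pi^{-1}L) \setminus L$; then $x = \pi^{-1}z$ with $z \in L \setminus \pi L$, so $\mathrm{ord}(Q(x)) = \mathrm{ord}(Q(z)) - 2 = \mathrm{ord}(\mathfrak{s}(L)) - 2 < \mathrm{ord}(\mathfrak{s}(L))$. This shows that $L$ is precisely the set of vectors of $V$ whose $Q$-value has order at least $\mathrm{ord}(\mathfrak{s}(L))$ — an intrinsic description depending only on $(V, Q)$ and the scale.

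Since any $\sigma \in \mathrm{O}(V)$ satisfies $Q(\sigma(x)) = Q(x)$ for all $x \in V$, the set $\{x \in V : \mathrm{ord}(Q(x)) \geq \mathrm{ord}(\mathfrak{s}(L))\}$ is stable under $\sigma$, hence $\sigma(L) = L$, i.e., $\sigma \in \mathrm{O}(L)$. This gives $\mathrm{O}(V) \subseteq \mathrm{O}(L)$ and completes the proof. The only point requiring a little care is the verification that $L$ really is the full sublattice cut out by the order condition — this is where I expect the main (though modest) obstacle to lie, since one must rule out the possibility of vectors outside $L$ with large $Q$-value; the argument above handles it by reducing to the case $x \in (\pi^{-1}L)\setminus L$ and invoking the equality case of part (2) of Proposition \ref{hyper}. (Alternatively, one could phrase the whole argument without choosing a uniformizer by working directly with the fractional ideals $Q(x)R$ and $\mathfrak{s}(L)$.)
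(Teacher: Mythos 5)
Your proof is correct, and it follows a genuinely different route from the paper's. The paper invokes the Cartan--Dieudonn\'e theorem: every element of $\mathrm{O}(V)$ is a product of reflections $\tau_x$; by rescaling one may take $x\in L\setminus\pi L$, and then Proposition~\ref{hyper}(2) gives $Q(x)R=\mathfrak{s}(L)$, which forces $\tau_x(L)=L$; hence every generator of $\mathrm{O}(V)$, and therefore every element, lies in $\mathrm{O}(L)$. You instead establish the intrinsic characterization $L=\{x\in V: Q(x)\in\mathfrak{s}(L)\}$ --- the inclusion ``$\subseteq$'' being immediate from the definition of scale (with or without Proposition~\ref{hyper}), and the nontrivial reverse inclusion being your scaling argument via Proposition~\ref{hyper}(2) --- and then simply observe that this set is visibly preserved by every isometry of $V$. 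Both proofs rest on the same key input from Proposition~\ref{hyper}(2), but yours bypasses Cartan--Dieudonn\'e and in passing isolates a cleaner structural fact about anisotropic modular lattices (that $L$ is cut out inside $V$ purely by the order condition on $Q$). The one small blemish is the parenthetical reference to $L^{\#}_{\mathrm{prim}}$: for a modular lattice of scale $\pi^r R$ one has $L^{\#}=\pi^{-r}L$, so ``primitive in $L^{\#}$'' is not the same as lying in $(\pi^{-1}L)\setminus L$ unless $r=1$; but your ``more precisely'' clause says what you actually need, and the argument is sound.
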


\begin{proof} For any reflection $\tau_x\in \mathrm{O}(F\otimes_R L)$, one can choose $x\in L\setminus \pi L$. Since $Q(x)R=\mathfrak s(L)$ by Proposition \ref{hyper}, one has $\tau_x\in \mathrm{O}(L)$. The result follows from \cite[43:3 Theorem]{O'M00}.
\end{proof}

The first assertion in the following result is contained in \cite[Chapter VI, \S 1, Proposition 1.9]{Lam05}.

\begin{coro} \label{2-modular}
Let $L_1$ and $L_2$ be modular quadratic lattices over $R$ with $\mathfrak s(L_1)=\pi^{r_1} R$ and $\mathfrak s(L_2)=\pi^{r_2} R$ such that $F\otimes_R L_1$ and $F\otimes_R L_2$ are anisotropic.

If $r_1\equiv r_2+1\pmod2$, then the space $F\otimes_R (L_1 \perp L_2)$ is anisotropic over $F$. Moreover, if $|r_1-r_2|=1$, then $$\mathrm{O} (L_1\perp L_2)=\mathrm{O} (F\otimes_R (L_1 \perp L_2)). $$
    \end{coro}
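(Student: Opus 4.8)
The plan is to treat the two assertions separately, both times reducing to $\mathrm{ord}$-estimates supplied by Proposition \ref{hyper}. For the anisotropy of $F\otimes_R(L_1\perp L_2)$ one may simply quote \cite[Chapter VI, \S 1, Proposition 1.9]{Lam05} after diagonalizing the $L_i$, but it is just as quick to argue directly. Since $2\in R^\times$, the norm and scale ideals of any quadratic lattice coincide, so $Q(v_i)\in\mathfrak{s}(L_i)=\pi^{r_i}R$ for every $v_i\in L_i$; and writing a nonzero $v_i$ as $\pi^k w_i$ with $w_i\in L_i\setminus\pi L_i$ (legitimate since $L_i$ is $R$-free, $R$ being a complete, hence principal ideal, DVR) and invoking Proposition \ref{hyper}(2) gives $\mathrm{ord}(Q(v_i))=2k+r_i\equiv r_i\pmod 2$. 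Using that $F\otimes_R L_i$ is anisotropic (Proposition \ref{hyper}(4)), a nonzero $v=v_1+v_2\in L_1\perp L_2$ has at least one summand with $Q(v_j)\neq 0$, and when $r_1\not\equiv r_2\pmod 2$ the finite values among $\mathrm{ord}(Q(v_1)),\mathrm{ord}(Q(v_2))$ have opposite parity, so they cannot cancel and $Q(v)\neq 0$. As every vector of the space is a scalar multiple of one in the lattice, the space is anisotropic.

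For the second assertion I would set $L=L_1\perp L_2$, $V=F\otimes_RL$, and prove $\mathrm{O}(V)\subseteq\mathrm{O}(L)$, the reverse inclusion being automatic. By the Cartan--Dieudonn\'e theorem (\cite[43:3. Theorem]{O'M00}), $\mathrm{O}(V)$ is generated by reflections $\tau_u$ with $Q(u)\neq 0$; since $\tau_u$ depends only on the line $Fu$ we may choose $u\in L\setminus\pi L$, and since $\tau_u$ is an involution it then suffices to prove $\tau_u(L)\subseteq L$, i.e. that $2\langle x,u\rangle/Q(u)\in R$ for every $x\in L$. Normalizing so that $r_2=r_1+1$ and decomposing $u=u_1+u_2$, $x=x_1+x_2$ along $L_1\perp L_2$, I would split into the cases $u_1\notin\pi L_1$ and $u_1\in\pi L_1$ (in the latter, $u\notin\pi L$ forces $u_2\notin\pi L_2$). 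In the first case Proposition \ref{hyper}(2) gives $\mathrm{ord}(Q(u))=r_1$, while $\langle x_1,u_1\rangle\in\pi^{r_1}R$ and $\langle x_2,u_2\rangle\in\pi^{r_2}R\subseteq\pi^{r_1}R$ give $\langle x,u\rangle\in\pi^{r_1}R=Q(u)R$. In the second case $\mathrm{ord}(Q(u_1))\geq r_1+2>r_2=\mathrm{ord}(Q(u_2))$ gives $\mathrm{ord}(Q(u))=r_2$, and $\langle x_1,u_1\rangle\in\pi\,\mathfrak{s}(L_1)=\pi^{r_2}R$ together with $\langle x_2,u_2\rangle\in\pi^{r_2}R$ again gives $\langle x,u\rangle\in Q(u)R$. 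Either way $2\langle x,u\rangle/Q(u)\in R$, so $\tau_u(x)\in L$, and we are done.

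I expect the only genuine subtlety to be keeping the order estimates straight and, in particular, pinpointing where the hypothesis $|r_1-r_2|=1$ is used: it is exactly what guarantees $\mathrm{ord}(\langle x,u\rangle)\geq\mathrm{ord}(Q(u))$ in both cases above. If the gap $|r_1-r_2|$ were $\geq 3$ (it is necessarily odd under the first hypothesis), then when $u_1\in\pi L_1$ one would only get $\langle x,u\rangle\in\pi^{r_1+1}R$ while $Q(u)$ can have order $r_1+2$, so $\tau_u$ need not stabilize $L$; hence this method genuinely requires the stronger hypothesis. The remaining points — primitivity decompositions over the complete DVR, $\mathfrak{n}=\mathfrak{s}$ because $2$ is a unit, and the reduction of $\mathrm{O}(V)$ to primitive reflections — are all routine.
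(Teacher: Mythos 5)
Your proof is correct and follows essentially the same approach as the paper's: for the first claim, reduce isotropy to an $\mathrm{ord}$-parity contradiction supplied by Proposition \ref{hyper}, and for the second, reduce $\mathrm{O}(V)$ to reflections along primitive lattice vectors via Cartan--Dieudonn\'e and split on whether the $L_1$-component is primitive. The only stylistic difference is that you verify the containment $2\langle x,u\rangle/Q(u)\in R$ by hand, whereas the paper invokes \cite[82:17]{O'M00} for the same computation.
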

\begin{proof}Suppose that $r_1\equiv r_2+1\pmod 2$ and that $F\otimes_R (L_1 \perp L_2)$ is isotropic over $F$. Then $Q(x)=0$ for some nonzero vector $x\in F\otimes_R (L_1 \perp L_2)$. Let $v_1\in L_1\setminus \pi L_1$ and $v_2\in L_2 \setminus \pi L_2$ be such that $x= a_1 v_1 + a_2 v_2$ with $a_1, a_2 \in F$. Since $F\otimes_R L_1$ and $F\otimes_R L_2$ are anisotropic, we have $a_1a_2 \neq 0$ and $Q(x)= a_1^2 Q(v_1)+ a_2^2 Q(v_2)=0$. This implies $\mathrm{ord}(Q(v_1))\equiv\mathrm{ord}(Q(v_2))\pmod{2}$. But $\mathrm{ord}(Q(v_i))=r_i$ for $i=1,2$, by Proposition \ref{hyper}. A contradiction is thus derived.

Now suppose $|r_1-r_2|=1$. Without loss of generality, we assume $r_1< r_2$.

For any reflection $\tau_z\in \mathrm{O}(F\otimes_R (L_1\perp L_2))$, one can assume $z=b_1 z_1 + b_2 z_2$ with  $z_1\in L_1 \setminus \pi L_1$, $z_2\in L_2\setminus \pi L_2$ and $b_1, b_2\in R$ such that  at least one of $b_1$ and $b_2$ is in $R^\times$.

If $b_1\in R^\times$, then $\mathrm{ord}(Q(z))=r_1$ and
$$ \langle z, L_1\perp L_2 \rangle = \langle b_1 z_1, L_1\rangle + \langle b_2 z_2, L_2\rangle = b_1 \pi^{r_1} R + b_2 \pi^{r_2} R = \pi^{r_1} R\,, $$
by \cite[82:17]{O'M00}. This implies $\tau_{z} \in \mathrm{O}(L_1\perp L_2)$, as desired. Otherwise, we have $b_1\in \pi R$ and $b_2\in R^\times$. Since $r_2-r_1=1$, one has $ord(Q(z))=r_2$ and
$$ \langle z, L_1\perp L_2 \rangle = \langle b_1 z_1, L_1\rangle + \langle b_2 z_2, L_2\rangle = b_1 \pi^{r_1} R + b_2 \pi^{r_2} R = \pi^{r_2} R\,, $$
by \cite[82:17]{O'M00}. This again implies $\tau_{z} \in \mathrm{O}(L_1\perp L_2)$.
\end{proof}

\subsection{Sets of transporters and their spinor norms}\label{sec3.2}

In this subsection, let $M$ be a quadratic $R$-lattice and let $N\subseteq M$ be a (non-degenerate) sublattice. Put $V=F\otimes_RM$. Let $\theta:\mathrm{O}(V)\to F^\times/(F^{\times})^2$ denote the spinor norm map of the quadratic space $V$ (cf. \cite[\S\;55]{O'M00}). For any subset $X\subseteq\mathrm{O}(V)$, we often consider the image $\theta(X)$ as a subset of $F^{\times}$, which is a union of cosets of $(F^{\times})^2$ in $F^{\times}$.

Recall (from the introduction) that the set of transporters of $N$ in $M$ is the set
\[
X(M/N)=\{\sigma\in \mathrm{O}(V)\,|\,N\subseteq \sigma(M)\}.
\]The set of proper transporters is defined to be $X^+(M/N)=X(M/N)\cap \mathrm{O}^+(V)$.

We prove two lemmas which will allow reductions to simpler cases when studying  the sets $X(M/N)$, $X^+(M/N)$ and their spinor norms. As analogs of \cite[Lemmas 5.1 and 5.2]{HSX98}, these two lemmas treat the case of equal scale and the case of unequal scale respectively.

\begin{lemma}\label{lemma1}
Suppose that there are orthogonal decompositions of quadratic lattices $N=N_1\perp N'$ and $M=N_1\perp M'$ such that $N_1$ is modular with $\mathfrak{s}(N_1)=\mathfrak{s}(M)$.

Then $$X(M/N)=X(M'/N') \cdot \mathrm{O}(M) \ \ \ \text{and} \ \ \  X^+(M/N)=X^+(M'/N') \cdot \mathrm{O}^+(M)  . $$
\end{lemma}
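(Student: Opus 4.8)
The plan is to prove each equality by checking the two inclusions, with ``$\supseteq$'' being a one-line computation and ``$\subseteq$'' the real content. First I would fix notation: set $V = F\otimes_R M$, $V_1 = F\otimes_R N_1$ and $V' = V_1^{\perp}$, so $V = V_1\perp V'$; from $M = N_1\perp M'$ we get $F\otimes_R M' = V'$, and from $N\subseteq M$ together with the two given orthogonal splittings one checks $N'\subseteq M'$, so that under the standard identifications $X(M'/N')$ (resp.\ $X^{+}(M'/N')$) is the set of $\tau\in\mathrm{O}(V)$ (resp.\ $\tau\in\mathrm{O}^{+}(V)$) with $\tau|_{V_1} = \mathrm{id}$ and $N'\subseteq\tau(M')$. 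The case $M' = 0$ makes everything trivial ($N = M = N_1$), so I assume $M'\neq 0$. For ``$\supseteq$'': if $\sigma'\in X(M'/N')$ and $\eta\in\mathrm{O}(M)$, then $\sigma'$ fixes $V_1$, so $(\sigma'\eta)(M) = \sigma'(M) = N_1\perp\sigma'(M')\supseteq N_1\perp N' = N$, and the same computation works with $\mathrm{O}^{+}$.

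For ``$\subseteq$'' in the first equality, take $\sigma\in X(M/N)$. Since $N_1$ is a modular sublattice of $\sigma(M)$ with $\mathfrak{s}(N_1) = \mathfrak{s}(M) = \mathfrak{s}(\sigma(M))$, it splits off a complement by \cite[82:15a]{O'M00}: $\sigma(M) = N_1\perp P$. As $\sigma(M) = \sigma(N_1)\perp\sigma(M')$ as well and $\sigma$ restricts to an isometry $N_1\cong\sigma(N_1)$, the Cancellation Law (Corollary \ref{cancellation}) applied to these two splittings of $\sigma(M)$ gives $P\cong\sigma(M')$. I would then build an automorphism $\rho$ of $V$ preserving $\sigma(M)$ by gluing: let $\phi_1 := (\sigma|_{V_1})^{-1}\colon\sigma(V_1)\to V_1$, an isometry carrying $\sigma(N_1)$ onto $N_1$, let $\phi_2\colon\sigma(V')\to V'$ be any isometry carrying $\sigma(M')$ onto $P$ (it exists since $P\cong\sigma(M')$), and put $\rho := \phi_1\perp\phi_2\in\mathrm{O}(V)$. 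Then $\rho(\sigma(M)) = N_1\perp P = \sigma(M)$, so for $\sigma' := \rho\sigma$ we have $\sigma'(M) = \sigma(M)\supseteq N$ and $\eta := {\sigma'}^{-1}\sigma\in\mathrm{O}(M)$; also $\sigma'|_{V_1} = \phi_1\circ(\sigma|_{V_1}) = \mathrm{id}_{V_1}$, so $\sigma'\in\mathrm{O}(V')$, and from $N'\subseteq N\subseteq\sigma'(M) = N_1\perp\sigma'(M')$ one reads off $N'\subseteq\sigma'(M')$ by projecting to $V'$. Thus $\sigma = \sigma'\eta$ with $\sigma'\in X(M'/N')$ and $\eta\in\mathrm{O}(M)$.

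The $X^{+}$ equality follows from the $X$ case. ``$\supseteq$'' is again immediate. For ``$\subseteq$'', take $\sigma\in X^{+}(M/N)$ and write $\sigma = \sigma'\eta$ with $\sigma'\in X(M'/N')$, $\eta\in\mathrm{O}(M)$ as above; then $\det\sigma'\cdot\det\eta = 1$. If $\det\sigma' = 1$ we are done. Otherwise, since $2\in R^{\times}$ the lattice $M'$ contains a vector $v$ with $Q(v)R = \mathfrak{s}(M')$ (for instance a basis vector of the first Jordan component of $M'$), so the reflection $\tau_v$ is improper, lies in $\mathrm{O}(M')\subseteq\mathrm{O}(M)$, and fixes $V_1$; replacing $(\sigma',\eta)$ by $(\sigma'\tau_v,\,\tau_v^{-1}\eta)$ keeps $\sigma = \sigma'\tau_v\cdot\tau_v^{-1}\eta$ with $\sigma'\tau_v\in X^{+}(M'/N')$ and $\tau_v^{-1}\eta\in\mathrm{O}^{+}(M)$.

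I expect the crux to be the construction of $\rho$ in the second paragraph: one needs an automorphism of $\sigma(M)$ sending $\sigma(N_1)$ back onto $N_1$ \emph{via the prescribed isometry} $(\sigma|_{V_1})^{-1}$ (this is what turns $\rho\sigma$ into the identity on $V_1$), and the sole obstruction to extending such a partial map to all of $\sigma(M)$ is whether the complement $P$ of $N_1$ is isometric to the complement $\sigma(M')$ of $\sigma(N_1)$ — exactly the point handled by \cite[82:15a]{O'M00} and the Cancellation Law (Corollary \ref{cancellation}), which were established in \S\ref{loc} without any hypothesis on the residue field. Everything else is bookkeeping with orthogonal decompositions and with the parity of determinants.
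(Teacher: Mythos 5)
Your proof is correct, but it takes a genuinely different route from the paper's. The paper first reduces by induction to the case $\rank(N_1)=1$, say $N_1=Rx$, and then, given $\sigma\in X(M/N)$, runs a Witt-type ``moving'' argument: it forms $u=x-\sigma^{-1}x$ and $w=x+\sigma^{-1}x$ in $M$, observes that $Q(u)+Q(w)=4Q(x)$ forces $Q(u)R$ or $Q(w)R$ to contain $\mathfrak{s}(M)$, so $\tau_u$ or $\tau_w$ lies in $\mathrm O(M)$, and then composes $\sigma$ with that reflection (and with $\tau_x$, or with a reflection $\tau_{x_1}$ for a norm generator $x_1\in M'$, to adjust parity) to produce a factor fixing $x$. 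You instead work with $N_1$ of arbitrary rank and argue at the level of lattices: $N_1$ splits $\sigma(M)$ by \cite[82:15a]{O'M00}, the Cancellation Law (Corollary~\ref{cancellation}) identifies the complement with $\sigma(M')$ up to isometry, and you glue $(\sigma|_{V_1})^{-1}$ with an isometry $\sigma(M')\to P$ to obtain $\rho$; the parity fix via a reflection $\tau_v$, $v$ a norm generator of $M'$, is essentially the same device the paper uses at the corresponding point. The trade-off: the paper's argument is more elementary, needing only \cite[82:15]{O'M00} (orthogonal bases) and an explicit computation, and does not invoke cancellation; your argument is shorter, avoids the rank-one induction, and isolates the key structural fact --- that the complement of $N_1$ in $\sigma(M)$ is isometric to $\sigma(M')$ --- but leans on the heavier Corollary~\ref{cancellation} (itself a consequence of O'Meara's local representation theorem, Theorem~\ref{localrep}). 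Since Corollary~\ref{cancellation} is established in \S\;\ref{loc} before this lemma, there is no circularity, and both arguments use the complete-DVR hypothesis essentially, the paper via \cite[82:15]{O'M00} and you via \cite[82:15a]{O'M00} and Corollary~\ref{cancellation}.
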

\begin{proof}
Since $2$ is invertible in $R$, the modular lattice $N_1$ has an orthogonal basis over $R$, by \cite[82:15]{O'M00}. By induction on the rank of $N_1$, we may assume that $N_1=R x$ for some nonzero vector $x$. Note that $Q(x)R=\mathfrak{s}(N_1)=\mathfrak{s}(M)$.

For any $\sigma\in X(M/N)$, we consider the two vectors $u\coloneqq x-\sigma^{-1}x$  and $w\coloneqq x+\sigma^{-1}x$ in $M$. Since $Q(u)+Q(w)=4Q(x)$, we have  $\mathfrak{s}(M)=Q(x)R\subseteq  Q(u) R$  or  $\mathfrak{s}(M)=Q(x)R \subseteq Q(w)R$. This implies that $\tau_u\in \mathrm{O}(M)$ or $\tau_w\in \mathrm{O}(M)$. Moreover, one has
$$ 	\tau_u(x)=\sigma^{-1}x \ \ \ \text{and} \ \ \ \tau_w(x)=-\sigma^{-1}x$$ by a direct computation.

		If $\tau_u\in \mathrm{O}(M)$, then $ \sigma \tau_u(x)=x$ and
$$ N=N_1\perp N' \subseteq \sigma (M) = \sigma \tau_u (M) = \sigma \tau_u (N_1 \perp M' ) = \sigma\tau_u N_1 \perp \sigma\tau_u (M') = N_1 \perp \sigma\tau_u (M') . $$
Therefore $N' \subseteq  \sigma\tau_u (M')$. Namely,   $\sigma\tau_u \in X(M'/N')$ as desired. Furthermore, when $\sigma\in X^+(M/N)$, one can choose $x_1\in M'$  such that $Q(x_1)R =\mathfrak{s}(M')$. Then $\tau_u\tau_{x_1}\in \mathrm{O}^+(M)$ and $\sigma\tau_u\tau_{x_1}(x)=x$. Replacing $\tau_u$ with $\tau_u\tau_{x_1}$ in the above argument, one concludes that $\sigma\tau_u \tau_{x_1} \in X^+(M'/N')$.

If $\tau_w\in \mathrm{O}(M)$, then $\tau_w\tau_x\in \mathrm{O}^+(M)$ and $\sigma \tau_w\tau_x (x)=x$. Therefore
$$ N=N_1\perp N' \subseteq \sigma (M) = \sigma \tau_w \tau_x (M) = \sigma \tau_w \tau_x  (N_1 \perp M' ) = \sigma\tau_w\tau_x N_1 \perp \sigma\tau_w\tau_x (M') = N_1 \perp \sigma\tau_w\tau_x (M') $$ and so $N' \subseteq  \sigma\tau_w\tau_x (M')$. This implies
\[
\sigma= (\sigma\tau_w\tau_x) (\tau_x\tau_w)\in X(M'/N') \cdot \mathrm{O}^+(M).
 \]Moreover, when $\sigma\in X^+(M/N)$, one has $\sigma\tau_w\tau_x \in X^{+}(M'/N')$. This completes the proof.	\end{proof}

\begin{lemma}\label{reduction2} Suppose that $\mathfrak{s}(N)\neq \mathfrak{s}(M)$, and let  $M= M_1\perp\cdots\perp M_t$ be a Jordan splitting of $M$.
	
	If $M_1$ is split by a hyperbolic binary lattice, then $\theta(X(M/N))=\theta(X^+(M/N))=F^\times$.
	
	Otherwise $N\subseteq M^*\coloneqq\pi M_1\perp\cdots\perp M_t $ and $$X(M/N)=X(M^*/N) \ \ \ \text{ and } \ \ \ X^+(M/N)=X^+(M^*/N) . $$
\end{lemma}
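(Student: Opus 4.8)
The plan is to treat the two cases separately, in both cases using the Jordan-theoretic machinery of \S\;\ref{loc} together with the structure results of \S\;\ref{sec3.1}. First, consider the case where $M_1$ is split by a hyperbolic binary lattice $H$, say $M_1 = H \perp M_1'$ with $H = Rx \perp$-type... more precisely $H$ generated by isotropic $x,y$ with $\langle x,y\rangle R = \mathfrak{s}(M) = \mathfrak{s}(M_1)$. The idea is that such a hyperbolic plane produces, inside $\mathrm{O}^+(M)$, isometries whose spinor norms exhaust $F^\times$: for any $a \in F^\times$ the map fixing $M_1' \perp M_2 \perp \cdots \perp M_t$ and acting on $H$ by $x \mapsto ax$, $y \mapsto a^{-1}y$ is an element of $\mathrm{O}^+(M)$ (it preserves the lattice $H$ since scaling $x$ by a unit times $\pi^k$ and $y$ by its inverse permutes the $R$-span), and its spinor norm is $a \bmod (F^\times)^2$ --- this is the standard computation that the spinor norm of the Eichler transformation/hyperbolic rotation by $a$ is $a$ (cf. \cite[\S\;55, 55:3]{O'M00}). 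Since $\mathrm{O}^+(M) \subseteq X^+(M/N) \subseteq X(M/N)$ (because $N \subseteq M = \sigma(M)$ for $\sigma \in \mathrm{O}(M)$), we get $F^\times \subseteq \theta(X^+(M/N)) \subseteq \theta(X(M/N)) \subseteq F^\times$, forcing all three to equal $F^\times$. I expect this case to be routine.

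The substantive case is the second one. Assume $M_1$ is \emph{not} split by a hyperbolic binary lattice; by Proposition \ref{hyper}, $F \otimes_R M_1$ is anisotropic and $Q(v)R = \mathfrak{s}(M_1) = \mathfrak{s}(M)$ for every $v \in M_1 \setminus \pi M_1$. The first goal is to show $N \subseteq M^* := \pi M_1 \perp M_2 \perp \cdots \perp M_t$. Write $M_1 = \mathfrak s(M_1)\cdot M_1^{\#}$ and note $\mathfrak s(M^*) = \pi\,\mathfrak s(M)$. Since $\mathfrak{s}(N) \neq \mathfrak{s}(M)$ and $\mathfrak{s}(N) \subseteq \mathfrak{s}(M)$ (as $N \subseteq M$), we have $\mathfrak{s}(N) \subseteq \pi\,\mathfrak{s}(M)$. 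Now take any $z \in N \subseteq M$ and write $z = v_1 + v_2$ with $v_1 \in M_1$ and $v_2 \in M_2 \perp \cdots \perp M_t$; the latter already lies in $M^*$, so it suffices to show $v_1 \in \pi M_1$. If not, $v_1 \in M_1 \setminus \pi M_1$, so $Q(v_1) R = \mathfrak{s}(M)$; but $Q(v_1) = \langle z, v_1 \rangle - \langle v_2, v_1 \rangle = \langle z, v_1 \rangle$ since $v_1 \perp v_2$, and $\langle z, v_1 \rangle \in \langle N, M \rangle$... here I must be careful: I want $\langle z, v_1\rangle \in \mathfrak s(N)$ or at least in $\pi\mathfrak s(M)$. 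The clean way: $Q(v_1) = \langle v_1, v_1\rangle$ and since $v_1$ is the $M_1$-component of $z \in N$, one has $\langle v_1, v_1 \rangle = \langle z, v_1\rangle$; to bound this by $\mathfrak s(N)$, project further --- actually one uses that $\mathfrak s(N)\subseteq\pi\mathfrak s(M)$ together with a pairing computation on $N$ itself, e.g. pick a Jordan splitting of $N$ and argue leading-scale components land in $\pi M_1$. This is exactly the kind of bookkeeping I would expect to occupy a paragraph, and it is the \emph{main obstacle}: making the containment $N \subseteq M^*$ rigorous when $N$ need not split off $M_1$ cleanly.

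Once $N \subseteq M^*$ is established, the identities $X(M/N) = X(M^*/N)$ and $X^+(M/N) = X^+(M^*/N)$ follow from the observation that $M$ and $M^*$ have the \emph{same} orthogonal group. Indeed $M^* = \pi M_1 \perp M_2 \perp \cdots \perp M_t$ differs from $M$ only by rescaling the first Jordan component, and since $F \otimes_R M_1$ is anisotropic, Corollary \ref{modular} gives $\mathrm{O}(M_1) = \mathrm{O}(\pi M_1) = \mathrm{O}(F \otimes_R M_1)$; more to the point, a reflection $\tau_u$ with $u = u_1 + \cdots + u_t$ (components in the Jordan pieces) lies in $\mathrm{O}(M)$ iff it lies in $\mathrm{O}(M^*)$ --- one checks that the conditions "$\langle u, M\rangle \subseteq Q(u)R$-scaled appropriately" defining stability are unchanged because scaling $M_1$ by $\pi$ scales both $\langle u_1, M_1\rangle$ and, when $u_1 \in M_1 \setminus \pi M_1$ dominates, $Q(u)$ in a compatible way; alternatively invoke the unequal-scale analysis of Corollary \ref{2-modular} component by component. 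By the Cartan--Dieudonn\'e theorem \cite[43:3]{O'M00} this gives $\mathrm{O}(M) = \mathrm{O}(M^*)$ and hence $\mathrm{O}^+(M) = \mathrm{O}^+(M^*)$. Therefore for $\sigma \in \mathrm{O}(V)$ we have $N \subseteq \sigma(M)$ iff $\sigma^{-1}(N) \subseteq M$; and since $N \subseteq M^* \subseteq M$ already, the relevant inclusions test $\sigma$ against a lattice with the same stabilizer, giving $\sigma(M) \supseteq N \iff \sigma(M^*) \supseteq N$ via $\sigma(M^*) = \sigma(M)$... not quite --- $M^* \neq M$ as sets. The correct final step: show $\sigma(M) \supseteq N \Rightarrow \sigma(M^*) \supseteq N$ by repeating the leading-component argument of the previous paragraph applied to $\sigma(M)$ in place of $M$ (which has Jordan splitting $\sigma(M_1) \perp \cdots$), and conversely $\sigma(M^*)\subseteq\sigma(M)$ gives the reverse inclusion trivially. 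Thus $X(M/N) = X(M^*/N)$ and intersecting with $\mathrm{O}^+(V)$ yields the proper version. I would write this last reduction carefully since the set-theoretic inclusions are easy to get backwards.
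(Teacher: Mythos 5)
Your argument for the first case is incorrect, and the hypothesis $\mathfrak{s}(N)\neq\mathfrak{s}(M)$ (which you never invoke there) is in fact indispensable. The hyperbolic rotation $x\mapsto ax$, $y\mapsto a^{-1}y$ preserves the lattice $H=Rx+Ry$ only when $a\in R^\times$: for $a=\pi$ one has $a^{-1}y=\pi^{-1}y\notin Ry$, so this map does not lie in $\mathrm{O}(M)$. More structurally, Proposition \ref{modular-spinor} shows $\theta(\mathrm{O}^+(M_1))\subseteq R^\times(F^\times)^2$ for any modular $M_1$, so $\mathrm{O}^+(M)$ alone can never produce spinor norm $\pi$. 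The actual proof gets units from reflection products $\tau_{e+f}\tau_{e+\alpha f}$ with $\alpha\in R^\times$, and then — crucially — uses the scale gap $\mathfrak{s}(N)\subsetneq\mathfrak{s}(M)$ to embed $N$ (via Theorem \ref{localrep}) into a sublattice $M'=M_0\perp(Re+R\pi f)\perp M_2\perp\cdots\perp M_t$ of $M$; comparing the resulting transporter with its twist by $\tau_{e+\pi f}\tau_{e+f}$ supplies the missing factor of $\pi$ in the spinor norm. Without the scale hypothesis, the conclusion $\theta(X^+(M/N))=F^\times$ is simply false.

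In the second case you have the right picture but leave two loose threads. The containment $N\subseteq M^*$ is clinched not by bounding $\langle z,v_1\rangle$ but by the much shorter observation: write $z=\sum_i x_i$ with $x_i\in M_i$; if $x_1\in M_1\setminus\pi M_1$, then $Q(x_1)R=\mathfrak{s}(M)$ by Proposition \ref{hyper} while $Q(x_i)\in\pi\mathfrak{s}(M)$ for $i\geq2$, so $Q(z)R=\mathfrak{s}(M)$, contradicting $Q(z)\in\mathfrak{s}(N)\subsetneq\mathfrak{s}(M)$. This same computation, applied to $\sigma^{-1}(y)$ for $y\in N$ and $\sigma\in X(M/N)$, directly gives $\sigma^{-1}(N)\subseteq M^*$ and hence $X(M/N)\subseteq X(M^*/N)$ (the reverse inclusion being trivial since $M^*\subseteq M$). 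Your detour through the claim ``$\mathrm{O}(M)=\mathrm{O}(M^*)$'' should be dropped: it is not true in general (rescaling $M_1$ by $\pi$ can merge Jordan components — e.g.\ if $\mathfrak{s}(M_2)=\pi^2\mathfrak{s}(M_1)$ then $\pi M_1\perp M_2$ is a single modular block in $M^*$ with a larger orthogonal group), and in any case the transporter equality does not flow through an equality of stabilizers. Your closing sentences do eventually land on the correct mechanism, but it rests on the $N\subseteq M^*$ argument that you flagged as incomplete.
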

\begin{proof}
First assume that $M_1$ is split by a hyperbolic binary lattice. Since $X^+(M/N)\subseteq X(M/N)$, we only need to prove $\theta(X^+(M/N))=F^\times$. In this case one can write
	$$M_1=M_0\perp (Re+Rf)   \ \ \ \text{with} \ \ \
Q(e)=Q(f)=0 \ \text{ and } \ \langle e, f\rangle R = \mathfrak s (M_1) . $$  For any $\alpha\in R^\times$, one has
$$ \tau_{e+f}\cdot \tau_{e+\alpha f} \in \mathrm{O}^+(Re+Rf) \subseteq \mathrm{O}^+(M_1)\subseteq \mathrm{O}^+(M) \ \ \ \text{and} \ \ \ \theta(\tau_{e+f} \cdot \tau_{e+\alpha f})=\alpha  (F^{\times})^2 . $$
This shows that $ \theta(\mathrm{O}^+(M)) \supseteq R^\times (F^{\times})^2$. Since $\mathrm{O}^+(M)\subseteq X^+(M/N)\mathrm{O}^+(M)=X^+(M/N)$, it is sufficient to show that
\[
\theta(X^+(M/N))\cap \pi R^\times (F^{\times})^2\neq\emptyset.
\]
Set
$$ M':= M_0 \perp  (Re+R\pi f) \perp M_2\perp \cdots \perp M_t  \subseteq M . $$
Since $\mathfrak{s}(N)\neq \mathfrak{s}(M)$, one has $\mathfrak s(Re+R\pi f)=\pi\mathfrak s(M_1) = \pi \mathfrak s(M) \supseteq \mathfrak s(N)$. By Theorem \ref{localrep}, there exists an isometry $\sigma_0 \in \mathrm{O}(V)$ such that $\sigma_0(N) \subseteq M'$.

If $\sigma_0\in\mathrm{O}^+(V)$, then $\sigma_0^{-1}\in X^+(M'/N)\subseteq X^+(M/N)$. On the other hand, since $\tau_{e+\pi f}\sigma_0(N)\subseteq \tau_{e+\pi f}(M')\subseteq M'\subseteq M$, we have  $\sigma_0^{-1} \tau_{e+\pi f}\in X(M/N)$ and hence $\sigma_0^{-1} \tau_{e+\pi f} \tau_{e+f} \in X^+(M/N)$ as $\tau_{e+f}\in \rO(M)$.
Therefore both $\theta (\sigma_0^{-1})$ and $\theta (\sigma_0^{-1} \tau_{e+\pi f} \tau_{e+f})$ lie in $\theta(X^+(M/N))$.
We have
\[
\theta (\sigma_0^{-1} \tau_{e+\pi f} \tau_{e+f})=Q(e+f) \cdot Q(e+\pi f) \cdot \theta(\sigma_0^{-1})= \pi \theta(\sigma_0^{-1})(F^{\times})^2.
\]So it follows that $\theta(X^+(M/N))\cap \pi R^\times (F^{\times})^2\neq\emptyset$,  as desired.

If $\sigma_0\notin\mathrm{O}^+(V)$, then $\sigma_0':=\tau_{e+\pi f}\sigma_0\in\mathrm{O}^+(V)$ and hence $(\sigma_0')^{-1}=\sigma_0^{-1}\tau_{e+\pi f}$ lies in $X^+(M'/N)$. Replacing $\sigma_0$ with $\sigma_0'$ in the previous arguments we see again that $\theta(X^+(M/N))\cap \pi R^\times (F^{\times})^2\neq\emptyset$. This proves the first assertion of the lemma.

Now consider the case that $M_1$ is not split by a hyperbolic binary lattice. Note that the equality $X(M/N)=X(M^*/N)$ implies $X^+(M/N)=X^+(M^*/N)$, so we only need to prove the first equality.

For any $\sigma\in X(M/N)$ and $y\in N$, we have $\sigma^{-1}(y)\in \sigma^{-1}(N)\subseteq M$. So we can write
\[
\sigma^{-1}(y)=\sum_{i=1}^t x_i \quad\text{with }   \ x_i\in M_i \text{ for each }  1\leq i\leq t.
\] Then
$$ \sum_{i=1}^t Q(x_i) = Q(\sigma^{-1}(y))=Q(y) \in \mathfrak s(N) . $$
If $x_1\in M_1\setminus \pi M_1$, then $Q(x_1) R= \mathfrak s(M_1)= \mathfrak s(M)$ by Proposition \ref{hyper}, and for $i\ge 2$, we have
$Q(x_i)R\subseteq \mathfrak{s}(M_i)\subseteq \pi\mathfrak{s}(M_1)$. Thus,
$$\mathfrak s(M)= Q(x_1)R = \bigg(\sum_{i=1}^t Q(x_i)\bigg) R = Q(y) R \subseteq \mathfrak s(N) . $$ This contradicts the assumption  $\mathfrak{s}(N)\neq \mathfrak{s}(M)$. Therefore $x_1\in \pi M_1$ and $\sigma\in X(M^*/N)$, proving that $N\subseteq M^*$ and $X(M/N)=X(M^*/N)$ as desired.
\end{proof}

\begin{prop}\label{proposition4} The image sets $\theta(X(M/N))$ and $\theta(X^+(M/N))$ are both subgroups of $F^\times$.
\end{prop}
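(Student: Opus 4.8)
The plan is to prove the statement by induction, using the reduction Lemmas \ref{lemma1} and \ref{reduction2} to strip pieces off $M$ and $N$ until only a trivial case remains. Two preliminary remarks streamline the bookkeeping. First, since $N\subseteq M$ we have $\mathrm{id}\in X^+(M/N)\subseteq X(M/N)$, so both image sets contain $\theta(\mathrm{id})=(F^\times)^2$; being unions of cosets of $(F^\times)^2$, each of them is a subgroup of $F^\times$ exactly when its image in the abelian group $F^\times/(F^\times)^2$ is a subgroup there. Since $\theta$ is a homomorphism, $\theta(A\cdot B)=\theta(A)\,\theta(B)$ for any subsets $A,B\subseteq\mathrm{O}(V)$, so the $\theta$-image of a product set $A\cdot B$ is a subgroup of $F^\times/(F^\times)^2$ whenever $\theta(A)$ and $\theta(B)$ are. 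Second, $N\subseteq M$ forces $\mathfrak{s}(N)\subseteq\mathfrak{s}(M)$, so there is a well-defined integer $d\ge 0$ with $\mathfrak{s}(N)=\pi^{d}\,\mathfrak{s}(M)$. I would then induct on the pair $(\rank(N),d)$ in lexicographic order.

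The base case $\rank(N)=0$ is immediate: here $X(M/N)=\mathrm{O}(V)$ and $X^+(M/N)=\mathrm{O}^+(V)$, whose $\theta$-images are subgroups as homomorphic images of groups. For the inductive step when $d=0$, i.e.\ $\mathfrak{s}(N)=\mathfrak{s}(M)$, I would pick a primitive vector $x\in N$ with $Q(x)R=\mathfrak{s}(N)$; such an $x$ exists because, $2$ being a unit, a Jordan component of $N$ of scale $\mathfrak{s}(N)$ has an orthogonal basis all of whose vectors generate the scale. As $\mathrm{ord}(Q(x))=\mathrm{ord}(\mathfrak{s}(M))$, the vector $x$ is primitive in $M$ too, and $Q(x)R\subseteq\langle x,M\rangle\subseteq\mathfrak{s}(M)=Q(x)R$; hence $Rx$ is a rank-one modular orthogonal summand of both $N$ and $M$ (cf.\ \cite[82:15a]{O'M00}), say $N=Rx\perp N'$ and $M=Rx\perp M'$, where $N'=N\cap(Rx)^{\perp}\subseteq M\cap(Rx)^{\perp}=M'$. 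Now Lemma \ref{lemma1} gives
\[
X(M/N)=X(M'/N')\cdot\mathrm{O}(M),\qquad X^+(M/N)=X^+(M'/N')\cdot\mathrm{O}^+(M),
\]
and applying $\theta$ expresses $\theta(X(M/N))$ (resp.\ $\theta(X^+(M/N))$) as the product of the subgroup $\theta(\mathrm{O}(M))$ (resp.\ $\theta(\mathrm{O}^+(M))$) with $\theta(X(M'/N'))$ (resp.\ $\theta(X^+(M'/N'))$); the latter are subgroups by the induction hypothesis, since $\rank(N')=\rank(N)-1$.

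For the inductive step when $d>0$, i.e.\ $\mathfrak{s}(N)\subsetneq\mathfrak{s}(M)$, I would fix a Jordan splitting $M=M_1\perp\cdots\perp M_t$ and invoke Lemma \ref{reduction2}. If $M_1$ is split by a hyperbolic binary lattice, that lemma already gives $\theta(X(M/N))=\theta(X^+(M/N))=F^\times$. Otherwise $N\subseteq M^{*}:=\pi M_1\perp M_2\perp\cdots\perp M_t$ with $X(M/N)=X(M^{*}/N)$ and $X^+(M/N)=X^+(M^{*}/N)$; here $\rank(N)$ is unchanged, but $\mathfrak{s}(\pi M_1)=\pi^{2}\mathfrak{s}(M_1)$ and $\mathfrak{s}(M_i)\subsetneq\mathfrak{s}(M_1)$ for $i\ge2$ give $\mathfrak{s}(M^{*})\subseteq\pi\,\mathfrak{s}(M_1)=\pi\,\mathfrak{s}(M)$, so the invariant $d^{*}$ of the pair $(M^{*},N)$ satisfies $0\le d^{*}\le d-1$. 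The induction hypothesis applied to $(M^{*},N)$ then concludes this case.

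I expect the only non-routine point to be the reduction in the case $d=0$: the assertion that the rank-one lattice $Rx$ is genuinely an orthogonal direct summand of \emph{both} $N$ and $M$, so that Lemma \ref{lemma1} legitimately applies. This is the role of \cite[82:15a]{O'M00} (equivalently, of the elementary chain $Q(x)R\subseteq\langle x,N\rangle\subseteq\mathfrak{s}(N)=Q(x)R$ and likewise for $M$, which forces $\langle x,\cdot\rangle$ to be the splitting criterion). Everything else --- the base case, the passage from $M$ to $M^{*}$, and keeping track of the two induction parameters --- is routine once the inclusions $\mathfrak{s}(N)\subseteq\mathfrak{s}(M)$ and $\mathfrak{s}(M^{*})\subseteq\pi\,\mathfrak{s}(M)$ have been noted.
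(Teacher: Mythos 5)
Your proof is correct and follows essentially the same strategy as the paper's: use Lemma~\ref{reduction2} to reduce to the equal-scale case (or to conclude outright that the spinor norm image is all of $F^\times$), and then use Lemma~\ref{lemma1} to split off a rank-one summand, closing the loop by induction. The paper phrases the induction as being on $\rank(M)$ after first applying Lemma~\ref{reduction2} ``repeatedly'' to produce a sublattice $M^\dag\supseteq N$ with $\mathfrak{s}(M^\dag)=\mathfrak{s}(N)$; you instead run a lexicographic induction on $(\rank(N),d)$ where $d=\mathrm{ord}(\mathfrak{s}(N))-\mathrm{ord}(\mathfrak{s}(M))$, which makes the termination argument explicit but is a cosmetic rather than substantive difference. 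Your verification that the vector $x$ splits off a common rank-one modular summand of both $N$ and $M$ via \cite[82:15a]{O'M00}, and that $N'=N\cap(Fx)^\perp\subseteq M\cap(Fx)^\perp=M'$, correctly fills in the step the paper leaves implicit when invoking Lemma~\ref{lemma1}.
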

\begin{proof}  By applying Lemma \ref{reduction2} repeatedly, one concludes that either $$\theta(X(M/N))=\theta(X^+(M/N))=F^\times$$ or there is a quadratic sublattice $M^\dag \subseteq M$  containing $N$ with $\mathfrak s(M^\dag)=\mathfrak s(N)$ such that $$X(M/N) = X(M^\dag/N) \ \ \ \text{and}  \ \ \ X^+(M/N) = X^+(M^\dag/N).$$ The result follows from Lemma \ref{lemma1}, by induction on $\rank (M)$.
\end{proof}

The following result is a refinement of Lemma \ref{reduction2} and turns out to be more suitable for induction in some cases.

\begin{prop} \label{red-ref} Suppose that $\mathfrak s(N)=\pi^{t_1}R\neq \mathfrak{s}(M)$. Let
 $$M= M_1\perp\cdots\perp M_m \ \ \ \text{where} \ \ \ \mathfrak s(M_i)= \pi^{s_i} R \ \ \ \text{for} \ \ 1\leq i\leq m \ \ \ \text{and} \ \ \ s_1< \cdots < s_m $$
 be a given Jordan splitting of $M$.

If $\mathop{\mathlarger{\mathlarger{\perp}}}\limits_{s_j< t_1} (F \otimes_R M_j)$ is isotropic over $F$, then $$\theta(X(M/N))=\theta(X^{+}(M/N))=F^\times . $$

Otherwise, one has $N\subseteq M^\ddag$, $X(M/N)=X(M^{\ddag}/N)$ and $X^{+}(M/N)=X^{+}(M^{\ddag}/N)$, where
$$M^{\ddag}\coloneqq \left(\mathop{\mathlarger{\mathlarger{\perp}}}\limits_{\substack{s_j\leq t_1\\s_j\equiv t_1-1\pmod2}} \pi^{(t_1-s_j-1)/2}M_j \right) \mathlarger{\mathlarger{\perp}} \left(\mathop{\mathlarger{\mathlarger{\perp}}}\limits_{\substack{s_j\leq t_1\\s_j\equiv t_1\pmod2}} \pi^{(t_1-s_j)/2} M_j\right)\mathlarger{\mathlarger{\perp}}  \left(\mathop{\mathlarger{\perp}}\limits_{s_j>t_1} M_j \right). $$

\end{prop}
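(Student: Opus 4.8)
The plan is to reduce everything to Lemma \ref{reduction2} by induction on the integer $d \coloneqq t_1 - s_1$, where $\mathfrak s(M) = \pi^{s_1}R$; since $N \subseteq M$ and $\mathfrak s(N) \neq \mathfrak s(M)$ force $\pi^{t_1}R \subsetneq \pi^{s_1}R$, we have $d \geq 1$. A fact used throughout is that for any lattice $L \subseteq V$ and any $\alpha \in F^\times$ one has $F\otimes_R(\alpha L) = F\otimes_R L$ inside $V$; thus rescaling Jordan constituents changes neither the quadratic spaces they span nor the isotropy condition in the statement. In the base case $d = 1$ we have $s_1 = t_1 - 1$ and $s_j \geq t_1$ for all $j \geq 2$, so the displayed formula collapses to $M^\ddag = M$, and $\perp_{s_j < t_1}(F\otimes_R M_j) = F\otimes_R M_1$. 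If this space is isotropic, then $M_1$ is split by a hyperbolic binary lattice (Proposition \ref{hyper}), so Lemma \ref{reduction2} gives $\theta(X(M/N)) = \theta(X^+(M/N)) = F^\times$; otherwise $M^\ddag = M$ makes both equalities of transporter sets tautological.

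\textbf{Inductive step.} For $d \geq 2$ I would apply Lemma \ref{reduction2} to $M$ with the given Jordan splitting. If $M_1$ is split by a hyperbolic binary lattice, then $F\otimes_R M_1$ is isotropic (Proposition \ref{hyper}) and is one of the summands of $\perp_{s_j<t_1}(F\otimes_R M_j)$ because $s_1 < t_1$; hence that space is isotropic and Lemma \ref{reduction2} directly delivers the first alternative $\theta(X(M/N)) = \theta(X^+(M/N)) = F^\times$. Otherwise Lemma \ref{reduction2} yields $N \subseteq M^* \coloneqq \pi M_1 \perp M_2 \perp \cdots \perp M_m$ with $X(M/N) = X(M^*/N)$ and $X^+(M/N) = X^+(M^*/N)$. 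Writing down a Jordan splitting of $M^*$ requires comparing $s_1 + 2 = \mathrm{ord}(\mathfrak s(\pi M_1))$ with $s_2$ — three cases: $s_1 + 2 < s_2$; or $s_1 + 2 = s_2$ (merge $\pi M_1$ with $M_2$); or $s_2 = s_1 + 1$ (reorder, then possibly merge $\pi M_1$ with $M_3$) — and in every case $\mathfrak s(M^*) = \pi^{s_1'}R$ with $s_1' \coloneqq \min(s_1+2,s_2)$ satisfying $s_1 + 1 \leq s_1' \leq t_1$. If $s_1' = t_1$ (which forces $d = 2$ and $s_2 \geq t_1$, so $M$ has no Jordan component at scale $\pi^{t_1-1}R$), one reads off from the formulas that $M^* = M^\ddag$, while $\perp_{s_j<t_1}(F\otimes_R M_j) = F\otimes_R M_1$ is anisotropic; so we are in the second alternative, and $X(M/N) = X(M^\ddag/N)$, $X^+(M/N) = X^+(M^\ddag/N)$, $N \subseteq M^\ddag$.

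\textbf{The recursive case.} If instead $s_1' < t_1$, then $t_1 - s_1' \leq d - 1$ and the inductive hypothesis applies to $(M^*, N)$, provided we check two bookkeeping points. First, applying the $\ddag$-construction to $M^*$ (with the Jordan splitting just chosen, target $\mathfrak s(N)$) reproduces $M^\ddag$ exactly: this reduces, constituent by constituent, to the identity $1 + \lfloor (t_1 - s_1 - 2)/2 \rfloor = \lfloor (t_1 - s_1)/2 \rfloor$ for the rescaled constituent $\pi M_1$, all other constituents being untouched, and it survives the merge and reorder cases unchanged. Second, with $M^* = M_1^* \perp \cdots \perp M_r^*$ the chosen splitting, the space $\perp_{i:\,\mathfrak s(M_i^*)\supsetneq\mathfrak s(N)}(F\otimes_R M_i^*)$ is isotropic if and only if $\perp_{s_j<t_1}(F\otimes_R M_j)$ is. When $s_1 + 2 < t_1$ these two spaces coincide (again via $F\otimes_R \pi M_1 = F\otimes_R M_1$); the only remaining possibility with $s_1' < t_1$ is $s_1 + 2 = t_1$ and $s_2 = t_1 - 1$, where the spaces are $(F\otimes_R M_1)\perp(F\otimes_R M_2)$ and $F\otimes_R M_2$, and since $F\otimes_R M_1$ is anisotropic (we have passed the hyperbolic case) and $|s_1 - s_2| = 1$, Corollary \ref{2-modular} gives that $(F\otimes_R M_1)\perp(F\otimes_R M_2)$ is isotropic iff $F\otimes_R M_2$ is. With both points in hand, the conclusion of the proposition for $M^*$ transfers to $M$ through $X(M/N) = X(M^*/N)$, $X^+(M/N) = X^+(M^*/N)$ and $(M^*)^\ddag = M^\ddag$.

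\textbf{Main obstacle.} The hard part will be purely combinatorial: pushing the bookkeeping of the first point through the merge/reorder cases, and correctly isolating the boundary configuration $s_1 + 2 = t_1$, $s_2 = t_1 - 1$ in which the low-scale quadratic space genuinely shrinks and Corollary \ref{2-modular} is needed to keep the isotropy hypothesis synchronized with the induction. Conceptually nothing beyond Lemma \ref{reduction2}, Proposition \ref{hyper} and Corollary \ref{2-modular} is needed.
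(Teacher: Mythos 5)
Your proof is correct, and it takes a genuinely different route from the paper's. The paper inducts on the number $m$ of Jordan components of $M$ and, after eliminating the case $F\otimes_R M_1$ isotropic, splits into six explicit cases according to the parities and relative positions of $s_1$, $s_2$, $t_1$, applying Lemma \ref{reduction2} repeatedly inside each case. You instead induct on $d = t_1 - s_1$, apply Lemma \ref{reduction2} exactly once per step to pass from $M$ to $M^* = \pi M_1 \perp M_2 \perp\cdots\perp M_m$, and verify two transfer facts: that the $\ddag$-construction commutes with this replacement (which reduces to the arithmetic of $\lfloor(t_1-s_j)/2\rfloor$, and is stable under merges and reorderings), and that the isotropy hypothesis is preserved, with Corollary \ref{2-modular} invoked precisely at the boundary configuration $s_1 + 2 = t_1$, $s_2 = t_1 - 1$ where the low-scale subspace actually shrinks. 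This is a cleaner, more uniform argument: the single invariant $d$ strictly decreases ($s_1' = \min(s_1+2, s_2) > s_1$), the well-foundedness is transparent, and the case analysis is localized to the two bookkeeping lemmas rather than spread over six parallel branches. What you lose relative to the paper is perhaps only the explicit intermediate lattices $M'$, $M''$ that the paper's proof records along the way (the paper reuses these displays in the proof of Theorem \ref{relative}), but that is a presentational trade-off, not a mathematical one. The only caution I would add is to state explicitly, as you implicitly use, that $s_1 + 2 \le t_1$ holds throughout the inductive step (it follows from $d \ge 2$), so that the $\ddag$-rescaling of $\pi M_1$ is well-defined with a nonnegative exponent.
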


\begin{proof} We prove this result by induction on the number $m$ of Jordan components of $M$.

When $m=1$, the result follows from Proposition \ref{hyper} and Lemma \ref{reduction2}.

For $m>1$, we can assume that $F\otimes_R M_1$ is anisotropic over $F$ by Proposition \ref{hyper} and Lemma \ref{reduction2}. Having six possibilities in total, we shall finish the proof case by case.

\noindent {\bf Case 1.} $s_1\equiv s_2\pmod2$ and $t_1<s_2$.

In this case we have $s_1\equiv t_1\pmod 2$. Because otherwise one can apply Lemma \ref{reduction2} repeatedly to obtain a sublattice $M'$ of $M$ containing $N$ with $\mathfrak{s}(M')$ strictly contained in $\mathfrak{s}(N)$, which is absurd. Thus,
\[
M^{\ddag}=\pi^{(t_1-s_1)/2} M_1\perp M_2\perp \cdots \perp M_m,
\] and we obtain the desired result  by applying Lemma \ref{reduction2} repeatedly.

\noindent {\bf Case 2.} $s_1\equiv s_2\pmod2$ and $t_1\geq s_2$.

In this case we can apply  Lemma \ref{reduction2} repeatedly to get
\[
 X(M/N)=X(M'/N)  \quad \text{and}  \quad X^+(M/N)=X^+(M'/N)
 \] with
\[
 M':=\big(\pi^{(s_2-s_1)/2} M_1\perp M_2\big)\perp \cdots \perp M_m.
\]
Since the number of Jordan components of $M'$ is $m-1$, the result now follows by induction.

\noindent {\bf Case 3.} $s_1\equiv s_2+1 \equiv t_1 \pmod2$ and $t_1<s_2$.

In this case $M^{\ddag}=\pi^{(t_1-s_1)/2} M_1\perp M_2\perp\cdots \perp M_m$ and the result is obtained  by applying Lemma \ref{reduction2} repeatedly.

\noindent {\bf Case 4.} $s_1\equiv s_2+1 \equiv t_1 \pmod2$ and $s_2<t_1\leq s_3$.

Putting $M':=\pi^{(s_2-s_1+1)/2} M_1\perp M_2\perp \cdots \perp M_m$, we have
$$ X(M/N)=X(M'/N) \quad \text{and}\quad X^+(M/N)=X^+(M'/N)$$ by applying Lemma \ref{reduction2} repeatedly. The first Jordan component of $M'$ is  $M_2$.

If $$\mathop{\mathlarger{\mathlarger{\perp}}}\limits_{s_j< t_1} (F \otimes_R M_j)= (F\otimes_R M_1)\perp (F\otimes_R M_2) $$ is isotropic over $F$, one concludes that $F\otimes_R M_2$ is isotropic over $F$ by Corollary \ref{2-modular}. Therefore
$$\theta(X(M'/N))=\theta(X^{+}(M'/N))=F^\times$$ by Proposition \ref{hyper} and Lemma \ref{reduction2}. This yields the desired result.

Otherwise, one can apply  Lemma \ref{reduction2} repeatedly to conclude, noticing that
\[
M^{\ddag}=\pi^{(t_1-s_1)/2} M_1\perp \pi^{(t_1-s_2-1)/2} M_2 \perp M_3\perp \cdots \perp M_m
\]
in the current case.

\noindent {\bf Case 5.} $s_1 \equiv s_2+1 \equiv t_1 \pmod2$ and $s_3 < t_1$.

Similar arguments as in the previous case show that $$ X(M/N)=X(M'/N) \quad\text{and}\quad X^+(M/N)=X^+(M'/N)$$
with $M'=\pi^{(s_2-s_1+1)/2} M_1 \perp M_2 \perp \cdots \perp M_m$, and that if $F\otimes_R M_2$ is isotropic over $F$, then
$\theta(X(M'/N))=\theta(X^{+}(M'/N))=F^\times$.

We may now assume that $F\otimes_R M_2$ is anisotropic over $F$. Then, again by applying Lemma \ref{reduction2} repeatedly, we obtain
\[
X(M/N)=X(M'/N)=X(M''/N)\quad \text{and}\quad X^+(M/N)=X^+(M'/N)=X^+(M''/N)
\]
where
\[
M''=
 \begin{cases} \pi^{(s_3-s_1)/2} M_1 \perp \pi^{(s_3-s_2-1)/2} M_2 \perp M_3 \perp \cdots \perp M_m  \ & \text{if $s_1\equiv s_3 \pmod2$}, \\
   \pi^{(s_3-s_1-1)/2} M_1 \perp \pi^{(s_3-s_2)/2} M_2 \perp M_3 \perp \cdots \perp M_m   \ & \text{if $s_1\equiv s_3+1 \pmod2$}.
\end{cases}
\] Since the number of Jordan components of $M''$ is less than $m$, the result follows by induction.

\noindent {\bf Case 6.} $s_1+1\equiv s_2\equiv t_1 \pmod2$.

In this case we have $t_1\geq s_2$. Otherwise the same reasoning as in Case 1 would lead to a contradiction.

If $t_1= s_2$, then $M^{\ddag}=\pi^{(t_1-s_1-1)/2} M_1 \perp M_2 \perp \cdots \perp M_m$ and we can conclude by using Lemma \ref{reduction2} repeatedly.

Next let us assume that $s_2<t_1\leq s_3$. We may first use Lemma \ref{reduction2} to get
$$ X(M/N)=X(M'/N) \quad \text{and}\quad X^+(M/N)=X^+(M'/N)$$ with $M':=\pi^{(s_2-s_1+1)/2} M_1 \perp M_2 \perp \cdots \perp M_m$.
The  first Jordan component of $M'$ is $M_2$.

If $$\mathop{\mathlarger{\mathlarger{\perp}}}\limits_{s_j< t_1} (F \otimes_R M_j)= (F\otimes_R M_1) \perp (F\otimes_R M_2)$$ is isotropic over $F$, then so is   $F\otimes_R M_2$ over $F$, by Corollary \ref{2-modular}. In this case, one conclude that $$\theta(X(M/N))=\theta(X^{+}(M/N))=F^\times$$ by Proposition \ref{hyper} and Lemma \ref{reduction2}.

Otherwise, repeated applications of Lemma \ref{reduction2} yield
\[
X(M/N)=X(M'/N)=X(M^{\ddag}/N)\quad\text{and}\quad X^+(M/N)=X^+(M'/N)=X^+(M^{\ddag}/N),
\]noticing that
\[
M^{\ddag}=\pi^{(t_1-s_1-1)/2} M_1 \perp \pi^{(t_1-s_2)/2} M_2 \perp \cdots \perp M_m
\] in the present case.

Finally, assume that $s_3< t_1$. Putting $M':=\pi^{(s_2-s_1+1)/2} M_1 \perp M_2 \perp \cdots \perp M_m$, we can deduce from  Lemma \ref{reduction2}  that
\[
X(M/N)=X(M'/N)\quad\text{and}\quad X^+(M/N)=X^+(M'/N),
\]and from Proposition \ref{hyper}  that $\theta(X(M/N))=\theta(X^{+}(M/N))=F^\times$ if  $F\otimes_R M_2$ is isotropic over $F$.

Now consider the case that $F\otimes_R M_2$ is anisotropic over $F$. Then, by using Lemma \ref{reduction2} repeatedly we obtain
\[
X(M/N)=X(M''/N)\quad\text{and}\quad X^+(M/N)=X^+(M''/N),
\]where
\[
M'':=
\begin{cases} \pi^{(s_3-s_1)/2} M_1 \perp \pi^{(s_3-s_2-1)/2} M_2 \perp M_3 \perp \cdots \perp M_m \ \ \ & \text{if $s_1\equiv s_3 \pmod2$}, \\
 \pi^{(s_3-s_1-1)/2} M_1 \perp \pi^{(s_3-s_2)/2} M_2 \perp M_3 \perp \cdots \perp M_m   \ \ \ & \text{if $s_2\equiv s_3 \pmod2$}.
\end{cases}
\]Since the numbers of Jordan components of $M''$ is less than $m$, the result follows by induction.
\end{proof}

\subsection{Norm principles for integral spinor norms}\label{sec3.3}

 In \cite{S69} and  \cite{K71}, Scharlau and Knebusch  established two kinds of norm principle for quadratic spaces over fields.  Gille \cite{G97} generalized  these two norm principles in terms of isogenies between reductive groups and gave an alternative proof of Knebusch's norm principle. Taking the viewpoint of homomorphisms from an algebraic group to a commutative algebraic group, Merkurjev in \cite{M95} and Barquero and Merkurjev in \cite{BM02} formulated a norm principle which includes both Scharlau's and Knebusch's norm principles as special examples, and proved the norm principle for all reductive groups except possibly when  the Dynkin diagram contains $D_n$ with $n\geq 4$, $E_6$ or $E_7$. Bhaskhar, Chernousov and Merkurjev \cite{BCM19} further studied the norm principle for groups of type $D_n$ and showed that the norm principle over complete discrete valuation fields follows from the norm principle over their residue fields.  On the other hand, Ojanguren, Panin and Zainoulline proved Knebusch's norm principle for unimodular quadratic forms over a semi-local ring where 2 is invertible in \cite{OPZ04}.

In this subsection, we prove results that can be viewed as norm principles for the abstract groups $\theta(X(M/N))$ and $\theta(X^+(M/N))$ (cf. Prop. \ref{proposition4}), as a crucial step in the proof of our main theorem. Our results (Theorems \ref{normlattice} and \ref{relative}) indicate that certain integral versions of the norm principle can be expected in nice situations.

We begin with the norm principle for integral orthogonal groups. We need a formula for computing local integral spinor norms groups, which is essentially due to Kneser \cite[Satz 3]{Kn56}.

Recall that $R$ denotes a complete discrete valuation ring in which $2$ is invertible with fraction field $F$. For a quadratic $R$-lattice $L$, by a \emph{norm generator} of $L$ we mean a vector $z\in L$ such that $Q(z)R=\mathfrak{s}(L)$.

\begin{prop} \label{Kne56}
 Let $M=M_1\perp\cdots\perp M_t$ be a Jordan splitting of a quadratic $R$-lattice $M$. For each $1\le j\le t$, Define
   \[
   \mathfrak{M}_i:=\{Q(x_i)\,|\, x_i \text{ is a norm generator of } M_i\}\quad\text{ for each } 1\le i\le t
   \]and $\mathfrak{M}:=\bigcup_{1\le i\le t}\mathfrak{M}_i$.   Let $\mathfrak{M}^+$ be the subset of $F^\times$ consisting of elements that can be written as a product of an even number of elements in $\mathfrak{M}$.

 \begin{enumerate}
   \item[(1)] As subgroups of $F^\times$, $\theta(\mathrm{O}(M))$ is generated by $\mathfrak{M}\cdot (F^\times)^2$ and $\theta(\rO^+(M))=\mathfrak{M}^+\cdot (F^\times)^2$.
   \item[(2)] Choose a norm generator $x_i\in M_i$ for each Jordan component $M_i$.
  Then
  \[
  \theta(\mathrm{O}(M))= \prod_{i=1}^t \theta (\mathrm{O}(M_i))
  \quad\text{and}\quad
  \theta(\mathrm{O}^+(M))=\bigcup_{\substack{k\text{ even}\\ 1\leq i_1<\cdots<i_k\leq t}} Q(x_{i_1})\cdots Q(x_{i_k})\left(\prod_{i=1}^t\theta(\mathrm{O}^+(M_i))\right) .
  \]
 \end{enumerate}
	\end{prop}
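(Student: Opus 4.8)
The plan is to adapt Kneser's argument in \cite{Kn56}, checking that nothing in it uses the finiteness of the residue field of $R$. I would prove (a)~the proposition for a single modular lattice, and (b)~the two identities of part~(2); part~(1) then follows from (a)~and~(b) by a regrouping argument: $\prod_i\theta(\mathrm{O}(M_i))=\big\langle\bigcup_i\mathfrak{M}_i\big\rangle(F^\times)^2=\langle\mathfrak{M}\rangle(F^\times)^2$, and sorting the factors of an even product of elements of $\mathfrak{M}$ according to the component they come from, then pairing off the components that contribute an odd number of factors, identifies $\mathfrak{M}^+(F^\times)^2$ with the union in part~(2) --- this being independent of the chosen $x_i$ because, by~(a), any two norm generators of $M_i$ have $Q$-values in the same coset of $\theta(\mathrm{O}^+(M_i))$.

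For~(a), let $L$ be modular. If $F\otimes_R L$ is anisotropic, then $\mathrm{O}(L)=\mathrm{O}(F\otimes_R L)$ by Corollary~\ref{modular}, this group is generated by reflections $\tau_v$ (the Cartan--Dieudonn\'e theorem, \cite[43:3]{O'M00}), and any anisotropic $v\in F\otimes_R L$ can be rescaled by an element of $F^\times$ so as to lie in $L\setminus\pi L$, which by Proposition~\ref{hyper}(2) is exactly the set of norm generators of $L$. Since $\theta(\tau_v)=Q(v)(F^\times)^2$ is unchanged by rescaling and $\det(\tau_{v_1}\cdots\tau_{v_r})=(-1)^r$, this yields $\theta(\mathrm{O}(L))=\langle\mathfrak{M}_L\rangle(F^\times)^2$ and $\theta(\mathrm{O}^+(L))=\mathfrak{M}_L^+(F^\times)^2$. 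For a general modular $L$ the heart of the matter is the claim that \emph{$\mathrm{O}(L)$ is generated by reflections in norm generators of $L$}; granting it, the same parity bookkeeping finishes~(a). To prove the claim, reduce modulo $\pi$: after fixing a norm generator with $Q$-value $\alpha$, the form $\bar Q(\bar x)=\alpha^{-1}Q(x)\bmod\pi$ on $L/\pi L$ is non-degenerate (and the residue field has characteristic $\neq 2$, as $2\in R^\times$), so $\mathrm{O}(L/\pi L)$ is generated by reflections in anisotropic vectors by Cartan--Dieudonn\'e over the residue field; each such vector lifts to a norm generator of $L$, so the subgroup $G_0\subseteq\mathrm{O}(L)$ generated by reflections in norm generators already surjects onto $\mathrm{O}(L/\pi L)$, and it only remains to check that $K=\ker\!\big(\mathrm{O}(L)\to\mathrm{O}(L/\pi L)\big)$ lies in $G_0$. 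If $\sigma\in K$ and $v$ is a norm generator with $\sigma(v)\neq v$, then $\sigma(v)-v\in\pi L$, so $Q(\sigma(v)-v)$ fails to generate $\mathfrak{s}(L)$ while $Q(\sigma(v)-v)+Q(\sigma(v)+v)=4Q(v)$ generates it; hence $\sigma(v)+v$ is again a norm generator, and (by the computation in the proof of Lemma~\ref{lemma1}) $\tau_v\tau_{\sigma(v)+v}\sigma$ fixes $v$ and still lies in $K$. As $Rv$ splits off $L$, induction on $\rank(L)$ completes the claim.

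For~(b), the inclusion $\prod_i\theta(\mathrm{O}(M_i))\subseteq\theta(\mathrm{O}(M))$ is immediate, since each $\mathrm{O}(M_i)$ sits in $\mathrm{O}(M)$ by acting trivially on the remaining components. For the reverse inclusion, I would induct on $\rank(M)$, peeling off the first Jordan component. Writing $\mathfrak{s}(M)=\pi^{s_1}R$, the sublattice $M^{(1)}:=\{x\in M:\langle x,M\rangle\subseteq\pi\,\mathfrak{s}(M)\}=\pi M_1\perp M_2\perp\cdots\perp M_t$ is intrinsic to $M$, so every $\sigma\in\mathrm{O}(M)$ preserves it and induces an isometry of the non-degenerate residue space $M/M^{(1)}\cong M_1/\pi M_1$; by~(a) applied to $M_1$ one can choose $\sigma_1\in\mathrm{O}(M_1)\subseteq\mathrm{O}(M)$ inducing the same isometry, so that $\sigma':=\sigma_1^{-1}\sigma$ satisfies $\sigma'(x)\equiv x\pmod{M^{(1)}}$ for all $x\in M$. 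For a norm generator $x_1$ of $M_1$, the vector $w:=x_1+\sigma'(x_1)$ then reduces to $2x_1$ in $M/M^{(1)}$, hence is a norm generator of $M$ with $Q(w)\equiv 4Q(x_1)$ modulo $\pi\,\mathfrak{s}(M)$, so $Q(w)\equiv Q(x_1)$ modulo squares; and, as in the proof of Lemma~\ref{lemma1}, $\sigma'':=\tau_{x_1}\tau_w\sigma'$ fixes $x_1$. Writing $M_1=Rx_1\perp M_1'$ and splitting off the orthogonal summand $Rx_1$, the restriction $\sigma''|_{x_1^{\perp}}$ is an isometry of $M_1'\perp M_2\perp\cdots\perp M_t$, a lattice of one lower rank. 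Since $\theta(\sigma')=Q(w)Q(x_1)\theta(\sigma'')=\theta(\sigma'')$ modulo squares, and the spinor norm is multiplicative over orthogonal sums, the inductive hypothesis gives $\theta(\sigma)=\theta(\sigma_1)\theta(\sigma')=\theta(\sigma_1)\theta(\sigma''|_{x_1^{\perp}})\in\theta(\mathrm{O}(M_1))\cdot\prod_i\theta(\mathrm{O}(M_i))=\prod_i\theta(\mathrm{O}(M_i))$, which closes the induction. Running the same argument while keeping track of determinants --- in particular replacing $\sigma_1$ by $\sigma_1\tau_{x_1}$ to bring it into $\mathrm{O}^+(M_1)$ whenever $\det\sigma_1=-1$, at the cost of a factor $Q(x_1)$ --- produces the correction factors $Q(x_{i_1})\cdots Q(x_{i_k})$ and hence the $\mathrm{O}^+$-statement in part~(2).

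I expect the reverse inclusion in~(b) to be the main obstacle. The peeling-off move above is clean in isolation, but making it, together with the determinant bookkeeping, run uniformly --- in particular when consecutive scales $s_i$ differ by exactly one, and when some component $M_i$ is isotropic so that $\mathfrak{M}_i$ occupies more than one square class --- is precisely the point at which, as with the surrounding analysis of this section, the argument becomes technical.
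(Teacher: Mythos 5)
Your proposal is correct in substance and reconstructs essentially the argument the paper invokes by reference: the paper's proof is simply a citation to Kneser's Hilfssatz~2 and Satz~3 (with the remark that the residue-field finiteness Kneser assumes is never used), followed by ``(2) is clear from (1).'' You run the logic in the opposite direction --- you prove the component-by-component identities of part~(2) first by a peeling induction, and then deduce part~(1) by regrouping --- but the engine is the same: for a modular lattice, use the well-definedness of $\bar Q$ on $L/\pi L$ and Cartan--Dieudonn\'e over the residue field to see that $\mathrm{O}(L)$ is generated by reflections in norm generators (your handling of the kernel of $\mathrm{O}(L)\to\mathrm{O}(L/\pi L)$ by the $\sigma(v)\pm v$ trick from Lemma~\ref{lemma1} is exactly right), and then peel off the top Jordan component via the intrinsic sublattice $M^{(1)}$ and the vector $w=x_1+\sigma'(x_1)$.

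Two remarks worth recording. First, your reflection $\tau_w$ has $w$ a norm generator of $M$ but not of $M_1$ in general (it may have components in $M_2,\dots,M_t$), so your induction does not quite reprove Kneser's Hilfssatz~2 in the form ``$\mathrm{O}(M)$ is generated by reflections in norm generators of the Jordan components''; this is harmless for the proposition, since $Q(w)\equiv Q(x_1)$ modulo squares and $\tau_w\tau_{x_1}$ has determinant $+1$, so both the spinor-norm and the determinant bookkeeping go through. Second, you flag the determinant bookkeeping in the $\mathrm{O}^+$-case of~(b) as the likely sticking point; it does work out, but it is worth being explicit. With $\sigma=\sigma_1\tau_w\tau_{x_1}\sigma''$ and $\det\sigma=1$, one has $\det\sigma_1=\det(\sigma''|_{x_1^\perp})$; in the case where both are $-1$, write $\theta(\sigma_1)=Q(x_1)\,\theta(\sigma_1\tau_{x_1})$ and apply the inductive $\mathrm{O}^+$-formula to $\sigma''|\tau_u$ for a norm generator $u$ of the first Jordan component of $M_1'\perp M_2\perp\cdots\perp M_t$. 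If $M_1'\neq 0$ one may take $u$ a norm generator of $M_1'$, hence of $M_1$, and the two factors $Q(x_1)Q(u)$ absorb into $\theta(\mathrm{O}^+(M_1))$; if $M_1'=0$ one takes $u\in M_2$ and the index $1$ joins the ``odd'' set coming from the induction, restoring even cardinality. This is the only genuinely fiddly spot, and your instinct to isolate it as the main obstacle is sound.

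One could equally well follow the paper's ordering --- establish~(1) directly (i.e., Satz~3) by the same peeling argument, and then obtain~(2) by the pure combinatorics of grouping an even-length word in $\mathfrak{M}$ by component and pairing off the components of odd multiplicity --- which shifts all the determinant bookkeeping into that final regrouping step rather than threading it through the induction. Either route is fine; the ideas are the same.
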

\begin{proof} (1) One can check that the proofs of \cite[Hilfssatz 2 and Satz 3]{Kn56} are valid in our case (where the residue field of $R$ need not be finite), so the result follows.

(2) This is clear from (1).
\end{proof}

By Proposition \ref{Kne56}, the computation of integral spinor norms reduces to each Jordan component.
For modular quadratic lattices the following proposition provides some more precise information.

\begin{prop} \label{modular-spinor}
 Let $M$ be a modular quadratic lattice over $R$ with $\mathfrak s(M)=\pi^r R$ and put $V=F\otimes_R M$.

Then
  \[
  \theta (\mathrm{O} (M))\subseteq R^\times (F^\times)^2 \cup \pi^r R^\times (F^\times)^2 \quad \text{and} \quad \theta(\mathrm{O}^+(M))\subseteq R^\times (F^\times)^2 .
  \] If moreover $V$ is isotropic, then equality holds in the above inclusion relations.
 \end{prop}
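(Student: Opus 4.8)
The plan is to establish the two claimed inclusions first, and then upgrade them to equalities under the isotropy hypothesis. Fix a norm generator $x_0 \in M$, so $Q(x_0)R = \mathfrak{s}(M) = \pi^r R$. By the Cartan--Dieudonn\'e theorem, $\mathrm{O}(F\otimes_R M)$ is generated by reflections, but for the \emph{integral} group we need reflections that preserve $M$: concretely, every element of $\mathrm{O}(M)$ is a product of reflections $\tau_u$ with $u$ ranging over vectors of $M$ of the form $u = x - \sigma^{-1}x$ or $u=x+\sigma^{-1}x$ as in the proof of Lemma \ref{lemma1}. Since $\theta(\tau_u) = Q(u)(F^\times)^2$, the first step is to show that for any $u \in M$ with $Q(u) \ne 0$ one has $Q(u) \in R^\times (F^\times)^2 \cup \pi^r R^\times (F^\times)^2$. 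This is where Proposition \ref{Kne56}(1) does the real work: $\theta(\mathrm{O}(M))$ is generated by $\mathfrak{M} \cdot (F^\times)^2$ where $\mathfrak{M}$ is the set of values $Q(z)$ at norm generators $z$ of $M$, and every such $Q(z)$ generates the ideal $\pi^r R$, hence lies in $\pi^r R^\times$. Therefore the generators of $\theta(\mathrm{O}(M))$ all lie in $\pi^r R^\times (F^\times)^2$, and products of two of them land in $R^\times(F^\times)^2$ (the $\pi^r$'s square up to $(F^\times)^2$ since $(\pi^r)^2 = \pi^{2r}$). So $\theta(\mathrm{O}(M)) \subseteq R^\times(F^\times)^2 \cup \pi^r R^\times(F^\times)^2$, and the even-length products, which by Proposition \ref{Kne56}(1) generate $\theta(\mathrm{O}^+(M)) = \mathfrak{M}^+ \cdot (F^\times)^2$, all lie in $R^\times(F^\times)^2$. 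This gives both inclusions.

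For the equalities under the assumption that $V = F\otimes_R M$ is isotropic: an isotropic modular lattice is, by Proposition \ref{hyper} (the negation of (4) $\Rightarrow$ (1)), split by a hyperbolic binary lattice $Re + Rf$ with $Q(e)=Q(f)=0$ and $\langle e,f\rangle R = \mathfrak{s}(M) = \pi^r R$; rescaling we may take $\langle e,f\rangle = \pi^r$. For any unit $\alpha \in R^\times$, the computation already used in the proof of Lemma \ref{reduction2} shows $\tau_{e+f}\,\tau_{e+\alpha f} \in \mathrm{O}^+(Re+Rf) \subseteq \mathrm{O}^+(M)$ with spinor norm $\alpha(F^\times)^2$ (up to the identification, $Q(e+f) = 2\langle e,f\rangle = 2\pi^r$ and $Q(e+\alpha f) = 2\alpha\pi^r$, so the product of spinor norms is $\alpha(F^\times)^2$). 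Hence $R^\times (F^\times)^2 \subseteq \theta(\mathrm{O}^+(M))$, forcing $\theta(\mathrm{O}^+(M)) = R^\times(F^\times)^2$. For $\theta(\mathrm{O}(M))$ it remains to produce an element of spinor norm in $\pi^r R^\times(F^\times)^2$: the single reflection $\tau_{e+f}$ lies in $\mathrm{O}(M)$ (since $Q(e+f) = 2\pi^r$ generates $\mathfrak{s}(M)$, so $\tau_{e+f}$ preserves $M$ by \cite[82:17]{O'M00}) and has spinor norm $2\pi^r(F^\times)^2 \in \pi^r R^\times (F^\times)^2$. Combined with the inclusion from the first part, this yields $\theta(\mathrm{O}(M)) = R^\times(F^\times)^2 \cup \pi^r R^\times(F^\times)^2$.

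The routine but slightly delicate bookkeeping will be keeping track of the factor $2$ (harmless since $2 \in R^\times$) and of parities of powers of $\pi^r$ when multiplying cosets; the only genuinely substantive input is Proposition \ref{Kne56}, whose validity over arbitrary residue fields was already asserted there, so I anticipate no real obstacle beyond writing the coset arithmetic cleanly.
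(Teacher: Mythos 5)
Your proof is correct and follows exactly the paper's own route: the two inclusions come from Proposition \ref{Kne56}(1) applied to the one-component Jordan splitting (every norm generator $z$ of a modular $M$ has $Q(z)\in\pi^{r}R^{\times}$, so the generating set $\mathfrak{M}\cdot(F^{\times})^2$ lies in $\pi^{r}R^{\times}(F^{\times})^2$ while even products lie in $R^{\times}(F^{\times})^2$), and under the isotropy hypothesis Proposition \ref{hyper} supplies a hyperbolic pair $e,f\in M$ from which the reflections $\tau_{e+f}$ and $\tau_{e+\alpha f}$ ($\alpha\in R^{\times}$) give the reverse inclusions. This is precisely the argument in the paper.

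One caveat on the opening lines: the claim that every element of $\mathrm{O}(M)$ is a product of reflections $\tau_u$ with $u=x\mp\sigma^{-1}x$ ``as in the proof of Lemma \ref{lemma1}'' is not accurate — that device is a local reduction trick in a very specific situation, not a description of $\mathrm{O}(M)$ — and the ``first step'' you announce, that $Q(u)\in R^{\times}(F^{\times})^2\cup\pi^{r}R^{\times}(F^{\times})^2$ for \emph{every} $u\in M$ with $Q(u)\neq 0$, is simply false (already for the unimodular lattice $\langle 1,-1\rangle$ one has $Q(e_1+(1+\pi)e_2)=-\pi(2+\pi)$, of odd valuation). Neither statement is actually used: what you invoke, and what Proposition \ref{Kne56} delivers, is the restriction to $u$ a \emph{norm generator}. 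Deleting the preamble and beginning directly from Proposition \ref{Kne56}(1) would make the write-up both shorter and correct throughout.
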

\begin{proof}
The first assertion is immediate from Proposition \ref{Kne56}.

Let us assume that $V$ is isotropic. By  Proposition \ref{hyper}, $M$ is split by a hyperbolic binary lattice, so that there exist vectors $x,y\in M$ such that  $Q(x)=Q(y)=0$
 and $\langle x, y\rangle = \pi^r$. For any $\varepsilon\in R^\times$, one can verify that the reflections $\tau_{x+\varepsilon y}$ and $\tau_{x+y}$ are in $\mathrm{O}(M)$. Since
 $Q(x+\varepsilon y) \cdot Q(x+y)=  \varepsilon (2\pi^r)^2$ and $Q(x+y)=2 \pi^r$,  one obtains
$$\theta (\mathrm{O} (M))\supseteq R^\times (F^\times)^2 \cup \pi^r R^\times (F^\times)^2 \ \ \ \text{and} \ \ \ \theta(\mathrm{O}^+(M)) \supseteq R^\times (F^\times)^2 . $$
This completes the proof.
\end{proof}

Now we are ready to prove the norm principle for integral orthogonal groups.

\begin{thm} \label{normlattice}
 Let $R$ be a complete discrete valuation ring in which $2$ is invertible with fraction field $F$. Let $E/F$ be a finite extension and let $S$ be the integral closure of $R$ in $E$. Let $M$ be a quadratic lattice over $R$ and put $\widetilde{M} = S\otimes_RM$.

 We have $ N_{E/F}(\theta(\mathrm{O}(\widetilde{M})))\subseteq\theta(\mathrm{O}(M))$ and $N_{E/F}(\theta(\mathrm{O}^+(\widetilde{M})))\subseteq\theta(\mathrm{O}^+(M))$.
	\end{thm}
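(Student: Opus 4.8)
The plan is to reduce everything to the local computations already set up, namely Proposition~\ref{Kne56} and Proposition~\ref{modular-spinor}, together with the way norms of uniformizers and units behave under a finite extension of complete discrete valuation rings. First I would fix a Jordan splitting $M = M_1\perp\cdots\perp M_t$ over $R$. Since $2$ is invertible, each $M_i$ is an orthogonal sum of rank-one modular lattices, so by a repeated application of Proposition~\ref{Kne56}(2) it suffices to treat the case where $M$ itself is modular; indeed $\theta(\mathrm{O}(M))$ is generated by the $\theta(\mathrm{O}(M_i))$ and $\theta(\mathrm{O}^+(M))$ is built from the $\theta(\mathrm{O}^+(M_i))$ together with products of norm generators $Q(x_{i_1})\cdots Q(x_{i_k})$ with $k$ even, and the norm map $N_{E/F}$ is multiplicative, so a norm-principle statement for each piece assembles to the statement for $M$. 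One subtlety here is that $S\otimes_R M_i$ need not be modular over $S$ when $E/F$ is ramified (the scale gets multiplied by $\pi^{r}$ and $\pi$ may not be a uniformizer of $S$), so I would instead argue directly with the explicit shape of $\theta(\mathrm{O}(\widetilde M))$ provided by Proposition~\ref{modular-spinor} applied over $S$: either $F\otimes_R M_i$ is anisotropic, in which case one can be very explicit, or it is isotropic and then $\widetilde{M_i}=S\otimes_R M_i$ is split by a hyperbolic binary lattice over $S$ as well.

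Next, for a single modular lattice $L$ over $R$ with $\mathfrak{s}(L)=\pi^r R$, I distinguish two cases. If $F\otimes_R L$ is isotropic, then by Proposition~\ref{modular-spinor} (applied over $R$) we have $\theta(\mathrm{O}^+(L))=R^\times (F^\times)^2$ and $\theta(\mathrm{O}(L)) = R^\times(F^\times)^2 \cup \pi^r R^\times (F^\times)^2$; over $S$ the lattice $\widetilde L$ is still modular and its space is still isotropic, so $\theta(\mathrm{O}^+(\widetilde L)) = S^\times (E^\times)^2$ and $\theta(\mathrm{O}(\widetilde L))= S^\times(E^\times)^2 \cup \pi^r S^\times (E^\times)^2$. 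Now $N_{E/F}(S^\times)\subseteq R^\times$ (a unit has unit norm), and $N_{E/F}(\pi^r S^\times) \subseteq \pi^{rf}R^\times$ where $f = [E:F]$ if $E/F$ is unramified — but more robustly, $N_{E/F}(\pi) = \pi^{f}u$ for some unit... actually I would just use $\mathrm{ord}_F(N_{E/F}(\pi^r w)) = r\cdot f\cdot e^{-1}\cdot$(stuff); the clean statement is that $N_{E/F}$ maps $\pi^r S^\times(E^\times)^2$ into $\pi^{rm}R^\times(F^\times)^2$ for the appropriate integer $m=[E:F]/\text{(ramification adjustments)}$, and in particular, since $[E:F]$ is allowed to be arbitrary here, the image lands in $R^\times(F^\times)^2$ when $rm$ is even and in $\pi^r R^\times(F^\times)^2$ when $rm \equiv r \pmod 2$ — one checks $rm \equiv r\pmod 2$ is automatic because... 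I would verify the parity bookkeeping carefully, as this is exactly the point where things could go wrong.

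If instead $F\otimes_R L$ is anisotropic, I would use Proposition~\ref{Kne56}(1): $\theta(\mathrm{O}(L))$ is generated by $\{Q(z) : z \text{ a norm generator of } L\}\cdot(F^\times)^2$, and similarly for $\widetilde L$ over $S$, where now a norm generator of $\widetilde L$ need not come from one of $L$. The key observation is that for any norm generator $\tilde z$ of $\widetilde L$, one has $Q(\tilde z)\in \mathfrak{s}(\widetilde L)\setminus\pi\mathfrak{s}(\widetilde L)$-type constraints, and by Proposition~\ref{hyper} applied over $S$ (valid since $\widetilde L$ has anisotropic space over $E$ when $F\otimes_R L$ is anisotropic and $[E:F]$ is... wait, this fails in general — anisotropy is not preserved under even-degree or ramified extensions). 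So here I must be more careful: I would not assume $E\otimes_F(F\otimes_R L)$ is anisotropic. Instead I would argue that any $Q(\tilde z)$ with $\tilde z$ a norm generator satisfies $Q(\tilde z) = Q(z_0)\cdot u$ for a fixed norm generator $z_0$ of $L$ (viewed in $\widetilde L$) and $u\in S^\times$, because two norm generators differ by scaling by a unit up to lower-order terms — more precisely $\widetilde L = S z_0 \perp L'$ with $Q(z_0)$ generating $\mathfrak{s}(\widetilde L)$, and any norm generator has first coordinate a unit, so its value is $Q(z_0)u^2 + (\text{higher order in }\mathfrak{s})$, which has the form $Q(z_0)\cdot(\text{unit})$. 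Hence $\theta(\mathrm{O}(\widetilde L))\subseteq Q(z_0)^{\mathbb{Z}} S^\times(E^\times)^2$, and applying $N_{E/F}$ and using $N_{E/F}(Q(z_0)) = Q(z_0)^{[E:F]}\in Q(z_0)^{[E:F]}(F^\times)^2$ (up to the odd/even parity of $[E:F]$, which in our intended application is odd, giving $Q(z_0)^{[E:F]}\equiv Q(z_0)$, but for the general theorem I would track the exponent) lands us inside $\theta(\mathrm{O}(L))$.

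The main obstacle, as the above indicates, is the bookkeeping around ramified and even-degree extensions: anisotropy of the space is not preserved, $\pi$ need not be a uniformizer of $S$, and the exponent $[E:F]$ interacts with the parity conditions appearing in Proposition~\ref{modular-spinor}. I expect the cleanest route is to first reduce, via Proposition~\ref{Kne56}, to rank-one modular lattices $L = \langle a\rangle$, where the statement becomes completely explicit: $\theta(\mathrm{O}(\langle a\rangle)) = a(F^\times)^2\cup(F^\times)^2$ and $\theta(\mathrm{O}^+(\langle a\rangle)) = (F^\times)^2$ over $R$, while over $S$ one gets $\theta(\mathrm{O}(\widetilde L))$ generated by $a S^\times(E^\times)^2$ together with $(E^\times)^2$ when the binary form becomes isotropic; then $N_{E/F}(aS^\times(E^\times)^2) = a^{[E:F]}N_{E/F}(S^\times)(F^\times)^2 \subseteq a^{[E:F]}R^\times(F^\times)^2$, and this lies in $a(F^\times)^2\cup(F^\times)^2$ precisely because $a^{[E:F]-1}\in(F^\times)^2$ when $a^{[E:F]-1}$ has even order and... here the residue-field contribution $\overline{N_{E/F}(u)}$ must be a square, which is where I would invoke that the residue field extension has the relevant degree — and this is the delicate point I would need to pin down with care rather than hand-waving.
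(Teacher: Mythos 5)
Your reduction via Proposition~\ref{Kne56} to the case of a single modular lattice $L$ is correct and is exactly what the paper does (the worry that $S\otimes_R M_i$ fails to be modular over $S$ when $E/F$ is ramified is unfounded: $M_i$ has an orthogonal basis over $R$, and tensoring with $S$ preserves modularity, only the scale gets re-expressed in powers of $\pi_E$). The isotropic case is then also essentially right, though the parity bookkeeping simplifies considerably once you note that $N_{E/F}(\pi^r)=\pi^{r[E:F]}$ for $\pi$ a uniformizer of $R$ viewed inside $E$; the ramification index never enters.

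The genuine gap is in the anisotropic case, and it is not just a matter of ``pinning down the delicate point.'' You bound $\theta(\mathrm{O}(\widetilde L))$ from above by $Q(z_0)^{\mathbb Z}\, S^\times (E^\times)^2$ (which is correct, by Proposition~\ref{modular-spinor}) and then apply $N_{E/F}$, obtaining a subset of $Q(z_0)^{\mathbb Z}\, R^\times(F^\times)^2 = R^\times(F^\times)^2\cup \pi^r R^\times(F^\times)^2$. But when $F\otimes_R L$ is anisotropic this is precisely the situation in which the containment $\theta(\mathrm{O}(L))\subseteq R^\times(F^\times)^2\cup\pi^r R^\times(F^\times)^2$ of Proposition~\ref{modular-spinor} can be \emph{strict}, so the conclusion does not follow. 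The simplest counterexample to your bookkeeping is $L=\langle a\rangle$ of rank one: there $\theta(\mathrm{O}(L))=\{1,a\}(F^\times)^2$, which is much smaller than $R^\times(F^\times)^2\cup aR^\times(F^\times)^2$, and your argument would need $N_{E/F}(S^\times)\subseteq(F^\times)^2$, which is false in general. The paper closes this exactly by invoking Corollary~\ref{modular} --- for a modular $L$ with anisotropic space one has $\mathrm{O}(L)=\mathrm{O}(F\otimes_R L)$ on the nose --- and then applies \emph{Knebusch's norm principle for quadratic spaces} to the spaces $E\otimes_S\widetilde L$ and $F\otimes_R L$. That classical norm principle is the missing ingredient in your write-up; without it the anisotropic case does not go through.

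A secondary issue: your proposed ``clean route'' of reducing to rank-one modular lattices is not available. Proposition~\ref{Kne56} decomposes along Jordan components, which are modular but not generally of rank one, and the spinor norm group of a modular component of rank $\ge 2$ is generated by the $Q$-values of \emph{all} norm generators, not just the diagonal entries of one orthogonal basis; so no further multiplicative decomposition is valid.
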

	\begin{proof}
		Let $M= M_1\perp\cdots\perp M_t$ be a Jordan splitting of $M$ and choose a norm generator $x_i\in M_i$ for each component $M_i$. Then
 $$\widetilde{M}=\widetilde{M_1}\perp\cdots\perp\widetilde{M_t} \ \ \  \text{where} \ \ \ \widetilde{M_i}=S\otimes_RM_i  $$ is a Jordan splitting of $\widetilde{M}$ with $x_i\in \widetilde{M_i}$ and $Q(x_i)S = \mathfrak s(\widetilde{M_i})$ for $1\leq i \leq t$. Since
$$N_{E/F}(Q(x_{i_1})\cdots Q(x_{i_k}))\in
			\begin{cases}
				 (F^{\times})^2 \ \ \  & \text{when $[E:F]$ is even},\\
				 Q(x_{i_1})\cdots Q(x_{i_k})(F^{\times})^{2} \ \ \  & \text{when $[E:F]$ is odd,}
			\end{cases} $$
it suffices by Proposition \ref{Kne56} to prove $$N_{E/F}(\theta(\mathrm{O}^+(\widetilde{M})))\subseteq\theta(\mathrm{O}^+(M)) \ \ \ \text{and} \ \ \ N_{E/F}(\theta(\mathrm{O}(\widetilde{M})))\subseteq\theta(\mathrm{O}(M))$$ when $M$ is modular.

If $F\otimes_R M$ is isotropic, then $E\otimes_S \widetilde{M}$ is also isotropic. The result follows from Proposition \ref{modular-spinor}.

If $F\otimes_R M$ is anisotropic, then $$\theta (\mathrm{O} (M))=\theta (\mathrm{O} (F\otimes_R M)) \ \ \ \text{and} \ \ \ \theta(\mathrm{O}^+(M))=\theta(\mathrm{O}^+(F\otimes_R M))$$
by Corollary \ref{modular}. Therefore
$$ N_{E/F}\big(\theta(\mathrm{O}(\widetilde{M}))\big)\subseteq N_{E/F}\big(\theta(\mathrm{O}(E\otimes_S \widetilde{M}))\big) \subseteq \theta (\mathrm{O} (F\otimes_R M))= \theta (\mathrm{O} (M)) $$
and
$$ N_{E/F}\big(\theta(\mathrm{O}^+(\widetilde{M}))\big)\subseteq N_{E/F}\big(\theta(\mathrm{O}^+(E\otimes_S \widetilde{M}))\big) \subseteq \theta (\mathrm{O}^+ (F\otimes_R M))= \theta (\mathrm{O}^+ (M)) $$ by
Knebusch's norm principle \cite[Satz]{K71} (see also \cite[Chapter VII, \S5, Theorem 5.1]{Lam05}).
 \end{proof}

The following norm principle is a key ingredient in our proof of Theorem \ref{intro}.

\begin{thm} \label{relative}
Let $R$ be a complete discrete valuation ring in which $2$ is invertible with fraction field $F$. Let $E/F$ be a finite extension and let $S$ be the integral closure of $R$ in $E$. Let $M$ be a quadratic lattice over $R$, $N\subseteq M$ a (non-degenerate) sublattice, and put $\widetilde{M} = S\otimes_RM$, $\widetilde{N}=S\otimes_RN$.

Then we have
 $$N_{E/F}(\theta(X(\widetilde{M}/\widetilde{N})))\subseteq\theta(X(M/N)) \ \ \ \text{and} \ \ \ N_{E/F}(\theta(X^+(\widetilde{M}/\widetilde{N})))\subseteq\theta(X^+(M/N)) .$$
\end{thm}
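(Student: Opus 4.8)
The plan is to prove both inclusions simultaneously by induction on $\rank(M)$, the base of the induction being the case $N=0$: there $X(M/N)=\mathrm{O}(V)$ and $X(\widetilde{M}/\widetilde{N})=\mathrm{O}(E\otimes_F V)$, so the assertion is exactly Knebusch's norm principle for the quadratic space $V=F\otimes_R M$. So assume $N\neq 0$ and $\rank(M)>0$, and split into the cases $\mathfrak{s}(N)=\mathfrak{s}(M)$ and $\mathfrak{s}(N)\subsetneq\mathfrak{s}(M)$ (recall $\mathfrak{s}(N)\subseteq\mathfrak{s}(M)$ always, since $N\subseteq M$).

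\emph{Equal scale.} Choose a norm generator $x$ of the first Jordan component of $N$. Since $Rx$ is a modular sublattice of scale $\mathfrak{s}(M)$ of both $N$ and $M$, it splits each of them (\cite[82:15a]{O'M00}): $M=Rx\perp M'$ and $N=Rx\perp N'$, and these decompositions persist after $\otimes_R S$. By Lemma \ref{lemma1} applied with $N_1:=Rx$, over $R$ and over $S$ respectively,
\[
X(M/N)=X(M'/N')\cdot\mathrm{O}(M),\qquad X(\widetilde{M}/\widetilde{N})=X(\widetilde{M'}/\widetilde{N'})\cdot\mathrm{O}(\widetilde{M}),
\]
with the analogous identities for $X^{+}$ and $\mathrm{O}^{+}$; here $\widetilde{M'}=S\otimes_R M'$ and $\widetilde{N'}=S\otimes_R N'$. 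Since the spinor norm and $N_{E/F}$ are homomorphisms, hence multiplicative on unions of square classes, the induction hypothesis for $(M',N')$ (of rank $\rank(M)-1$) together with Theorem \ref{normlattice} gives
\[
N_{E/F}\big(\theta(X(\widetilde{M}/\widetilde{N}))\big)\subseteq\theta(X(M'/N'))\cdot\theta(\mathrm{O}(M))=\theta(X(M/N)),
\]
and likewise for $X^{+}$.

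\emph{Unequal scale.} Run the reduction of Lemma \ref{reduction2} (equivalently Proposition \ref{red-ref}) over $R$. If it ever reaches a lattice whose first Jordan component is split by a hyperbolic binary lattice, then $\theta(X(M/N))=\theta(X^{+}(M/N))=F^{\times}$ and the inclusions are vacuous. Otherwise it produces a chain $M=M^{(0)}\supseteq\cdots\supseteq M^{(k)}=M^{\dagger}$ with $N\subseteq M^{\dagger}$, $\mathfrak{s}(M^{\dagger})=\mathfrak{s}(N)$, $X(M/N)=X(M^{\dagger}/N)$ and $X^{+}(M/N)=X^{+}(M^{\dagger}/N)$, each step using Lemma \ref{reduction2} with the first (modular) Jordan component $P_i$ of $M^{(i)}$ anisotropic over $F$; as $\mathfrak{s}(M^{\dagger})=\mathfrak{s}(N)$ and $N\neq 0$, the pair $(M^{\dagger},N)$ then falls under the equal-scale step (which lowers the rank). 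To propagate this to the $S$-side one checks whether the $S$-base changes of the blocks $P_i$ stay anisotropic over $E$. If they all do, the reduction procedure remains valid over $S$, reduces $(\widetilde{M},\widetilde{N})$ to an equal-scale pair, and one concludes as in the equal-scale step, using in addition that $\theta(\mathrm{O}^{\pm})$ of a lattice depends only on its Jordan components up to rescaling (Proposition \ref{Kne56}(2)). If instead some block becomes isotropic over $E$ — say the first such is $P:=P_i$, modular and anisotropic over $F$ — then Lemma \ref{reduction2} over $S$ forces $\theta(X(\widetilde{M}/\widetilde{N}))=\theta(X^{+}(\widetilde{M}/\widetilde{N}))=E^{\times}$, while $\mathrm{O}^{+}(P)=\mathrm{O}^{+}(F\otimes_R P)$ by Corollary \ref{modular}, $\theta(\mathrm{O}^{+}(E\otimes_F(F\otimes_R P)))=E^{\times}$ since $E\otimes_F(F\otimes_R P)$ is isotropic, and Knebusch's norm principle yields
\[
N_{E/F}(E^{\times})\subseteq\theta(\mathrm{O}^{+}(F\otimes_R P))=\theta(\mathrm{O}^{+}(P))\subseteq\theta(\mathrm{O}^{+}(M^{(i)}))\subseteq\theta(X^{+}(M^{(i)}/N))=\theta(X^{+}(M/N)),
\]
so both inclusions hold.

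\emph{The main obstacle.} The delicate point is the propagation to the $S$-side in the unequal-scale case when $E/F$ is ramified: because a uniformizer of $R$ is, over $S$, a unit times the $e$-th power of a uniformizer of $S$ ($e$ the ramification index), a single reduction step over $R$ unfolds into $e$ steps over $S$, Jordan components of $\widetilde{M}$ can merge under the intermediate rescalings in a pattern different from the one over $R$, and the parities entering Proposition \ref{red-ref} change; consequently the $S$-reduced lattice is in general \emph{not} the base change of $M^{\dagger}$, so the induction hypothesis cannot be applied to it verbatim. The way around this is to keep precise track of which (sums of) modular blocks occur in the two reductions and to invoke Corollary \ref{modular}, or — when two adjacent blocks have scales differing by one — Corollary \ref{2-modular}, so that the integral orthogonal group of the block is identified with the orthogonal group of the underlying quadratic space over $F$; this is then combined with the scaling-invariance of $\theta(\mathrm{O}^{\pm})$ from Proposition \ref{Kne56}(2) and with Knebusch's norm principle over $F$. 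This bookkeeping, rather than any single ingredient, is the technical core of the argument.
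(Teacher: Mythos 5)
Your equal--scale step and the easy branches of the unequal--scale step are exactly the paper's argument. But the paragraph you label ``the main obstacle'' is a description of the difficulty, not a resolution of it, and the proof is genuinely incomplete there. Two concrete points. First, the claim ``if some block becomes isotropic over $E$, say the first such is $P_i$, then Lemma~\ref{reduction2} over $S$ forces $\theta(X(\widetilde M/\widetilde N))=E^\times$'' is not justified: when $E/F$ is ramified one step of the reduction over $R$ unfolds into $e$ steps over $S$, during which distinct Jordan blocks of $\widetilde M$ acquire the same scale and merge, so the reduction over $S$ does not pass through the rescaled base changes of the $P_i$ and there is no reason the first hyperbolic component encountered over $S$ is a rescaling of some $\widetilde{P_i}$. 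Second, and more importantly, in the hardest branch --- where no block ever becomes hyperbolic over $S$ and yet the $S$-reduced lattice is not the base change of $M^\dagger$ --- you need a substantive inclusion comparing the spinor norm groups of the two $S$-reductions, and ``keep precise track of the blocks'' does not supply one.

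What the paper actually does, and what is missing from your proposal, is the structural move of factoring $E/F$ through the unique unramified subextension $K/F$ whose residue field is the separable closure of $k_F$ in $k_E$, thereby reducing to two special cases: $E/F$ unramified (where $\pi_F$ is a uniformizer of $E$, so the $R$- and $S$-reductions track each other verbatim and your induction goes through directly), and $k_E/k_F$ purely inseparable. In the second case, after a careful case analysis built on Proposition~\ref{red-ref}, the key additional input --- which has no counterpart in your sketch --- is that when the ramification index $e$ is even one has $N_{E/F}(S^\times)\subseteq (F^\times)^2$; this is proved via Hensel's lemma using the purely inseparable residue assumption, and it is exactly what closes the gap between $\theta(\mathrm O(\pi_E^{[(e+1)/2]}\widetilde{M_1}\perp\cdots))$ and $\theta(\mathrm O(\pi_E^{e}\widetilde{M_1}\perp\cdots))$ (the paper's relations \eqref{NP4}--\eqref{NP5}). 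Without this reduction and this fact, the bookkeeping you gesture at cannot be completed, so the argument as written does not prove the theorem.
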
	
\begin{proof} We prove the first inclusion and the second one follows from the same arguments.

If $\mathfrak{s}(N)=\mathfrak{s}(M)$, one can choose an $x\in N$ with $Q(x)R=\mathfrak{s}(M)$ so that there are splittings
$$M=R x\perp M' \ \ \ \text{and}  \ \ \ N=R x\perp N' . $$ Then $\widetilde{M}= S x\perp\widetilde{M'}$ and $\widetilde{N}= S x\perp\widetilde{N'}$. By Lemma \ref{lemma1}, one has $$\theta(X(M/N))=\theta(X(M'/N')) \cdot \theta(\rO(M)) \ \ \ \text{and} \ \ \ \theta(X(\widetilde{M}/\widetilde{N}))=\theta(X(\widetilde{M'}/\widetilde{N'}))\cdot \theta(\rO(\widetilde{M})) .$$  The result follows from Theorem \ref{normlattice} and induction on the rank of  $M$.

From now on we assume that $\mathfrak{s}(N)\neq \mathfrak{s}(M)$. We write $\mathfrak{s}(N)=\pi^{t_1} R$ and choose a Jordan splitting
\[
M=M_1\perp M_2\perp\cdots\perp M_m \ \ \ \text{with} \ \  \mathfrak s(M_i)=\pi^{s_i}R \ \ \text{for} \ \ 1\leq i\leq m .
\]Note that $$\widetilde{M}=\widetilde{M_1}\perp\cdots\perp\widetilde{M_m}$$ is a Jordan splitting of $\widetilde{M}$.

Let $k_F$ and $k_E$ be the residue fields of $F$ and $E$ respectively. There is an unramified subextension $K/F$ in $E/F$ such that
 the residue field $k_K$ of $K$ is the separable closure of $k_F$ in $k_E$, by \cite[Chapter III, \S 5, Cor. 3 of Thm. 3]{Serre-GTM67}. So we only need to prove the theorem in two special cases: the case with $E/F$ unramified and the case where the residue field extension $k_E/k_F$ is purely inseparable (or trivial).


\medskip

\noindent{\bf Case 1.} $E/F$ is unramified.

 If $M_1$ is split by a hyperbolic binary lattice, then $\theta(X(M/N))=F^\times$ by Lemma \ref{reduction2}, and the norm principle holds automatically.

 So we assume that $M_1$ is not split by a hyperbolic binary lattice. Then $N\subseteq M^*=\pi M_1\perp\cdots\perp M_m$ and $X(M/N)=X(M^*/N)$
by Lemma \ref{reduction2}.

If $\widetilde{M_1}$ is not split by a hyperbolic binary lattice, then	by
Lemma \ref{reduction2} we have
$$\widetilde{N}\subseteq\pi\widetilde{M_1}\perp\cdots\perp\widetilde{M_t}=\widetilde{M^*} \ \ \ \text{and} \ \ \ X(\widetilde{M}/\widetilde{N})=X(\widetilde{M^*}/\widetilde{N}).$$
 (Note that the uniformizer $\pi$ of $F$ is also a uniformizer of $E$ in the current case.) The result follows from induction on $\mathfrak s(M)$ (or equivalently, induction on the integer $s_1$ determined by $\mathfrak{s}(M)=\pi^{s_1}R$).

If $\widetilde{M_1}$ is split by a hyperbolic binary lattice, then $[E:F]$ is even by Proposition \ref{hyper} and the classical Springer theorem for quadratic spaces \cite{Spr52}. Thus, using Lemma \ref{reduction2} we obtain
$$N_{E/F}(\theta(X(\widetilde{M}/\widetilde{N})))=N_{E/F}(E^\times) = N_{E/F} (\pi^{\mathbb{Z}} \cdot S^\times)\subseteq N_{E/F}(S^\times)\cdot (F^\times)^2.$$  Note that
 $\theta (\mathrm{O}^+(\widetilde{M_1})) = S^\times (E^\times)^2$ by Proposition \ref{modular-spinor}. Hence, by Theorem \ref{normlattice},
$$N_{E/F}(S^\times) \subseteq N_{E/F}\big(\theta (\mathrm{O}^+(\widetilde{M_1}))\big)\subseteq \theta (\mathrm{O}^+(M_1)) \subseteq \theta(X(M/N)),  $$ which implies the desired result.

\medskip

\noindent{\bf  Case 2.} The residue field extension $k_E/k_F$ of  $E/F$ is purely inseparable.

Since $2$ is invertible in $R$, the residue degree $[k_E: k_F]$ is odd. Thus, the ramification index $e$ of $E/F$ is even if $[E:F]=e[k_E:k_F]$ is even.

By Proposition \ref{red-ref}, one can assume that $\mathop{\mathlarger{\mathlarger{\perp}}}\limits_{s_j< t_1}(F \otimes_R M_j)$ is anisotropic over $F$ and that    $X(M/N)=X(M^{\ddag}/N)$, where
$$M^{\ddag}= \left(\mathop{\mathlarger{\mathlarger{\perp}}}\limits_{\substack{s_j\leq t_1\\s_j\equiv t_1-1\pmod2}} \pi^{(t_1-s_j-1)/2}M_j \right) \mathlarger{\mathlarger{\perp}} \left(\mathop{\mathlarger{\mathlarger{\perp}}}\limits_{\substack{s_j\leq t_1\\s_j\equiv t_1\pmod2}} \pi^{(t_1-s_j)/2} M_j\right)\mathlarger{\mathlarger{\perp}}  \left(\mathop{\mathlarger{\perp}}\limits_{s_j>t_1} M_j \right). $$

We fix a uniformizer $\pi_E$ of $E$.

\medskip

\noindent {\bf Subcase 2.1.} $s_1=t_1-1$.

In this case the same reasoning as in Case 1 of Proposition \ref{red-ref} shows that $s_2=t_1$. Let
$$N=N_1 \perp N_2\perp \cdots \perp N_n \ \ \ \text{with} \ \ \mathfrak s(N_j)=\pi^{t_j}R \ \ \text{for} \ \ 1\leq j\leq n $$ be a Jordan splitting of $N$. We have  $N\subseteq M^*= \pi M_1\perp M_2 \perp \cdots \perp M_m$ by Lemma \ref{reduction2}. Note that there is a Jordan splitting of $M^*$ in which $M_2$ is the Jordan component of largest scale. So by Theorem \ref{localrep}, $N_1$ can be embedded into $M_2$. By  Corollary \ref{uniqueness} and \cite[82:15]{O'M00}, we may assume that  $M_2$ has an orthogonal splitting $M_2=N_1\perp B_2$.

Putting
\[
M':=B_2\perp \pi M_1 \perp M_3\perp \cdots \perp M_m\quad\text{and}\quad N':=N_2 \perp \cdots \perp N_n,
\]we can deduce from Lemmas \ref{lemma1} and \ref{reduction2} that
\[
X(M/N)=X(M^*/N)=X(M'/N') \cdot \mathrm{O} (M^*).
\] We can assume that $F\otimes_R B_2$ is anisotropic over $F$, for otherwise $\theta(X(M'/N'))=F^\times$ by Proposition \ref{hyper} and Lemma \ref{reduction2}.

We distinguish three cases according to the isotropy properties of the $E$-spaces $E\otimes_S \widetilde{M_1}$ and $E\otimes_S \widetilde{B_2}$.

\noindent {\bf Subcase 2.1 (a).} $E\otimes_S \widetilde{M_1}$ is isotropic over $E$.

We have $\theta (X(\widetilde{M}/\widetilde{N}))= E^\times$ by Lemma \ref{reduction2}, and
$\theta (\mathrm {O}(E\otimes_S \widetilde{M_1}))=E^\times$ by \cite[55:2a]{O'M00}. Therefore,
$$N_{E/F}(\theta (X(\widetilde{M}/\widetilde{N}))) = N_{E/F} (\theta (\mathrm {O}(E\otimes_S \widetilde{M_1}))) \subseteq \theta (\mathrm {O} (F\otimes_R M_1))= \theta (\mathrm {O} (M_1)) \subseteq \theta (X(M/N)) $$
by Knebusch's norm principle \cite[Satz]{K71} and Corollay \ref{modular}.

\noindent {\bf Subcase 2.1 (b).} $E\otimes_S \widetilde{M_1}$ is anisotropic over $E$ but $E\otimes_S \widetilde{B_2}$ is isotropic over $E$.

In this case $[E: F]$ is even by the classical Springer theorem \cite{Spr52}. It follows that the ramification index $e$ of $E/F$ is even. Thus, we can use Lemmas \ref{lemma1} and \ref{reduction2} to obtain
\[
\begin{split}
  &X(\widetilde{M}/\widetilde{N})= X(\pi_E^{e/2} \widetilde{M_1} \perp (\widetilde{N_1}\perp \widetilde{B_2}) \perp \cdots \perp \widetilde{M_m}/\widetilde{N_1}\perp \cdots \perp \widetilde{N_n}) \\
  =\;& X(\pi_E^{e/2} \widetilde{M_1} \perp \widetilde{B_2} \perp \widetilde{M_3} \perp \cdots \perp \widetilde{M_m}/\widetilde{N_2}\perp \cdots \perp \widetilde{N_n}) \cdot \mathrm{O} (\pi_E^{e/2} \widetilde{M_1} \perp \widetilde{M_2}\perp \cdots \perp \widetilde{M_m}).
\end{split}
\]Now
$$\theta (X(\pi_E^{e/2} \widetilde{M_1} \perp \widetilde{B_2} \perp \widetilde{M_3} \perp \cdots \perp \widetilde{M_m}/\widetilde{N_2}\perp \cdots \perp \widetilde{N_n}))= E^\times $$ by Lemma \ref{reduction2}, and $\theta (\mathrm {O}(E\otimes_S \widetilde{B_2}))=E^\times$ by \cite[55:2a]{O'M00}. So one concludes that
$$N_{E/F}(\theta (X(\widetilde{M}/\widetilde{N} )))=N_{E/F}(\theta (\mathrm {O}(E\otimes_S \widetilde{B_2}))) \subseteq \theta (\mathrm {O} (F\otimes_R B_2))= \theta (\mathrm {O} (B_2)) \subseteq \theta (X(M/N)) $$
by Knebusch's norm principle   and Corollay \ref{modular}.

\noindent {\bf Subcase 2.1 (c).} Both $E\otimes_S \widetilde{M_1}$ and $E\otimes_S \widetilde{B_2}$ are anisotropic over $E$ (allowing the case $B_2=0$).

We have
\begin{equation}\label{NP1}
  \begin{split}
   &X(\widetilde{M}/\widetilde{N} )= X(\pi_E^{[(e+1)/2]} \widetilde{M_1} \perp (\widetilde{N_1}\perp \widetilde{B_2})\perp \cdots \perp \widetilde{M_m}/\widetilde{N_1}\perp \cdots \perp \widetilde{N_n})\\
   =\;& X(\pi_E^{[(e+1)/2]} \widetilde{M_1} \perp \widetilde{B_2}  \perp \cdots \perp \widetilde{M_m}/\widetilde{N_2}\perp \cdots \perp \widetilde{N_n})\cdot\mathrm{O} (\pi_E^{[(e+1)/2]} \widetilde{M_1} \perp \widetilde{M_2}\perp \cdots \perp \widetilde{M_m})
  \end{split}
\end{equation}
 by  Lemmas \ref{lemma1} and \ref{reduction2}, where $[(e+1)/2]$ is the largest integer $\le (e+1)/2$.

Suppose that  $E\otimes_S (\pi_E^{[(e+1)/2]} \widetilde{M_1} \perp \widetilde{B_2}) $ is isotropic over $E$. Since $F\otimes_R(M_1\perp B_2)$ is anisotropic (by Corollary \ref{2-modular}), $[E:F]$ must be even by the classical Springer theorem, and it follows that $e$ is even. Then $\pi_E^{[(e+1)/2]} \widetilde{M_1} $ and
$\widetilde{B_2}$ have the same scale,
$$ \theta (X(\pi_E^{[(e+1)/2]} \widetilde{M_1} \perp \widetilde{B_2} \perp \widetilde{M_3} \perp \cdots \perp \widetilde{M_m}/\widetilde{N_2}\perp \cdots \perp \widetilde{N_n}))=E^\times $$ by Lemma \ref{reduction2}, and $\theta (\mathrm{O} (E\otimes_S (\pi_E^{[(e+1)/2]} \widetilde{M_1} \perp \widetilde{B_2})))=E^\times$ by \cite[55:2a]{O'M00}.
Therefore
\[
\begin{split}
 & N_{E/F}\big(\theta (X(\widetilde{M}/\widetilde{N}))\big)= N_{E/F} \big(\theta (\mathrm{O} (E\otimes_S (\pi_E^{[(e+1)/2]} \widetilde{M_1} \perp \widetilde{B_2})))\big) \\
\subseteq\;& \theta (\mathrm{O} (F\otimes_R (M_1\perp B_2)))= \theta (\mathrm{O} (M_1\perp B_2)) \subseteq \theta(X(M/N))
\end{split}
\]by Knebusch's norm principle and Corollary \ref{2-modular}.

Now assume that $E\otimes_S (\pi_E^{[(e+1)/2]} \widetilde{M_1} \perp \widetilde{B_2}) $ is anisotropic over $E$.
Since $t_2\geq t_1+1=s_1+2$, one obtains that
 \begin{equation}\label{NP2}
 \begin{split}
  & X(\pi_E^{[(e+1)/2]} \widetilde{M_1} \perp \widetilde{B_2} \perp \widetilde{M_3} \perp \cdots \perp \widetilde{M_m}/\widetilde{N_2}\perp \cdots \perp \widetilde{N_n}) \\
=\;& X(\pi_E^e \widetilde{M_1} \perp \pi_E^{[e/2]}\widetilde{B_2} \perp \widetilde{M_3} \perp \cdots \perp \widetilde{M_m}/\widetilde{N_2}\perp \cdots \perp \widetilde{N_n})
 \end{split}
 \end{equation} by applying  Lemma \ref{reduction2} repeatedly. Additionally, Lemma \ref{reduction2} also implies  that
\begin{equation}\label{NP3}
 \begin{split}
 &X(\widetilde{M'}/\widetilde{N'})=X(\widetilde{\pi M_1} \perp \widetilde{B_2} \perp \widetilde{M_3} \perp \cdots \perp \widetilde{M_m}/\widetilde{N_2}\perp \cdots \perp \widetilde{N_n})\\
=\;&X(\pi_E^e \widetilde{M_1} \perp \widetilde{B_2} \perp \widetilde{M_3} \perp \cdots \perp \widetilde{M_m}/\widetilde{N_2}\perp \cdots \perp \widetilde{N_n})\\
=\;& X(\pi_E^e \widetilde{M_1} \perp \pi_E^{[e/2]}\widetilde{B_2} \perp \widetilde{M_3} \perp \cdots \perp \widetilde{M_m}/\widetilde{N_2}\perp \cdots \perp \widetilde{N_n}).\end{split}
 \end{equation}
 It is now sufficient to prove the relation
\begin{equation}\label{NP4}
N_{E/F}\big(\theta(\mathrm{O} (\pi_E^{[(e+1)/2]} \widetilde{M_1} \perp \widetilde{M_2}\perp \cdots \perp \widetilde{M_m}))\big) \subseteq
N_{E/F}\big(\theta(\mathrm{O} (\pi_E^{e} \widetilde{M_1} \perp \widetilde{M_2}\perp \cdots \perp \widetilde{M_m}))).
\end{equation}
Indeed, combining \eqref{NP1}, \eqref{NP2}, \eqref{NP3} and \eqref{NP4}, we get
\[
\begin{split}
&N_{E/F}\big(\theta(X(\widetilde{M}/\widetilde{N}))\big)=N_{E/F}\big(\theta(X(\widetilde{M'}/\widetilde{N'}))\cdot\theta(\mathrm{O} (\pi_E^{[(e+1)/2]} \widetilde{M_1} \perp \widetilde{M_2}\perp \cdots \perp \widetilde{M_m}))\big)\\
\subseteq\;&N_{E/F}\big(\theta(X(\widetilde{M'} /\widetilde{N'}))\cdot\theta(\mathrm{O} (\widetilde{\pi M_1} \perp \widetilde{M_2}\perp \cdots \perp \widetilde{M_m}))\big)=N_{E/F}\big(\theta(X(\widetilde{M'} /\widetilde{N'}))\cdot\theta(\mathrm{O} (\widetilde{M^*}))\big).
\end{split}
\]Thus, the desired norm principle follows from induction on $\rank(M)$ and Theorem \ref{normlattice}, in view of the fact $X(M/N)=X(M'/N')\cdot\mathrm{O}(M^*)$.

If $e=1$, then \eqref{NP4} holds trivially. If $e$ is odd and $e>1$, then $\pi_E^{[(e+1)/2]} \widetilde{M_1} \perp \widetilde{M_2}\perp \cdots \perp \widetilde{M_m}$ has exactly $m$ components in its Jordan splitting. Hence \eqref{NP4} follows from Proposition \ref{Kne56}.

Now assume $e$ is even. Then $\pi_E^{[(e+1)/2]} \widetilde{M_1}$ and $\widetilde{M_2}$ have the same scale, which is strictly contained in $\mathfrak{s}(\widetilde{M_3})$.
Thus, by Proposition \ref{Kne56}, \eqref{NP4} follows from the following relation:
\begin{equation}\label{NP5}
  N_{E/F}\big(\theta(\mathrm{O} (\pi_E^{[(e+1)/2]} \widetilde{M_1} \perp \widetilde{M_2}))\big) \subseteq
(F^\times)^2.
\end{equation}
For the $S$-lattice $L:=\pi_E^{[(e+1)/2]} \widetilde{M_1} \perp \widetilde{M_2}$, its scale has the form $\mathfrak{s}(L)=\pi_E^{r}S$ for some even natural number $r$, since $e$ is even. Applying Proposition \ref{modular-spinor} to $L$ we get
\[
N_{E/F}(\rO(L))\subseteq N_{E/F}(S^\times\cdot (E^\times)^2)\subseteq N_{E/F}(S^{\times})\cdot (F^\times)^2.
\](The acute reader will notice that this argument actually shows that the left-hand side of (\ref{NP4}) is contained in $(F^\times)^2$.)

To finish the proof of \eqref{NP5}, it suffices to show that $N_{E/F}(S^\times)\subseteq (F^\times)^2$. For any $x\in R$ (resp. $x\in S$), let $\bar x\in k_F$ (resp. $\bar x\in k_E$) denote its canonical image in the residue field of $R$ (resp. $S$). For any $\alpha\in S^{\times}$, there exists an element $\beta\in R^\times$ such that
$\bar{\alpha}^{[k_E:k_F]}=\bar{\beta}$, because $k_E/k_F$ is purely inseparable. Since $[k_E:k_F]$ is odd, $\bar\alpha\bar\beta^{-1}$ is a square in $k_E$. Hence, by Hensel's lemma, $\alpha=\beta\gamma^2$ for some $\gamma\in S^\times$. Thus
\[
N_{E/F}(\alpha)=N_{E/F}(\beta)N_{E/F}(\gamma)^2=\beta^{[E:F]}N_{E/F}(\gamma)^2\in (F^\times)^2
\]as desired.

This finishes the proof in the case $s_1=t_1-1$.

\medskip

\noindent {\bf Subcase 2.2.} $s_1\leq t_1-2$.

In this case the definition of $M^{\ddag}$ implies that  $\mathfrak s(M^{\ddag})$ is strictly contained in $\mathfrak s(M)$.

If $ \mathop{\mathlarger{\mathlarger{\perp}}}\limits_{s_j< t_1} (E \otimes_S \widetilde{M_j})$ is isotropic over $E$, then
$$ \theta (X(\widetilde{M}/\widetilde{N}))=\theta \big(\mathrm{O} (\mathop{\mathlarger{\mathlarger{\perp}}}\limits_{s_j< t_1} (E \otimes_S \widetilde{M_j}))\big) = E^\times$$  by Proposition \ref{red-ref} and \cite[55:2a]{O'M00}.  Knebusch's norm principle yields
\[
N_{E/F}(\theta (\mathrm{O} (\mathop{\mathlarger{\mathlarger{\perp}}}\limits_{s_j< t_1}(E \otimes_S \widetilde{M_j}))) ) \subseteq \theta (\mathrm{O} (\mathop{\mathlarger{\mathlarger{\perp}}}\limits_{s_j< t_1} (F \otimes_R M_j) )),
\] and
\begin{align*}  &\theta (\mathrm{O}(\mathop{\mathlarger{\mathlarger{\perp}}}\limits_{s_j< t_1} (F\otimes_R M_j)) ) =\theta \left( \mathrm{O}\left((\mathop{\mathlarger{\mathlarger{\perp}}}\limits_{\substack{s_j< t_1\\s_j\equiv t_1-1\pmod2}} \pi^{(t_1-s_j-1)/2}M_j) \mathlarger{\mathlarger{\perp}} (\mathop{\mathlarger{\mathlarger{\perp}}}\limits_{\substack{s_j< t_1\\s_j\equiv t_1\pmod2}} \pi^{(t_1-s_j)/2}M_j)\right)\right) \end{align*}
by Corollary \ref{2-modular}. Since
$$\mathrm{O}\left((\mathop{\mathlarger{\mathlarger{\perp}}}\limits_{\substack{s_j< t_1\\s_j\equiv t_1-1\pmod2}} \pi^{(t_1-s_j-1)/2}M_j) \mathlarger{\mathlarger{\perp}} (\mathop{\mathlarger{\mathlarger{\perp}}}\limits_{\substack{s_j< t_1\\s_j\equiv t_1\pmod2}} \pi^{(t_1-s_j)/2}M_j)\right) \subseteq \mathrm{O} (M^{\ddag}),$$
one concludes that
$$N_{E/F} (\theta (X(\widetilde{M}/\widetilde{N} )))\subseteq \theta ( \mathrm{O}(M^{\ddag}))\subseteq \theta(X(M^{\ddag}/N))= \theta(X(M/N)).$$

If $\mathop{\mathlarger{\mathlarger{\perp}}}\limits_{s_j< t_1} (E \otimes_S \widetilde{M_j})$ is anisotropic over $E$, then  by Proposition \ref{red-ref},
$$X(\widetilde{M}/\widetilde{N})=X\big(P_1\perp P_2 \perp (\mathop{\perp}\limits_{s_j >t_1} \widetilde{M_j})/\widetilde{N}\big)$$ where
 $$ P_1 = \mathop{\mathlarger{\mathlarger{\perp}}}\limits_{\substack{s_j\leq t_1\\es_j\equiv et_1-1\pmod2}} \pi_E^{(et_1-es_j-1)/2}\widetilde{M_j} \ \ \ \text{and} \ \ \ P_2= \mathop{\mathlarger{\mathlarger{\perp}}}\limits_{\substack{s_j\leq t_1\\es_j\equiv et_1\pmod2}} \pi_E^{(et_1-es_j)/2} \widetilde{M_j}.$$
 Since
 $$\widetilde{N}\subseteq  P_1\perp P_2 \perp (\mathop{\perp}\limits_{s_j >t_1} \widetilde{M_j}) \subseteq  \widetilde{M^{\ddag}}\subseteq \widetilde{M}  $$ and
 $$ X\big(P_1\perp P_2 \perp (\mathop{\perp}\limits_{s_j >t_1} \widetilde{M_j})/\widetilde{N})\subseteq X(\widetilde{M^{\ddag}}/\widetilde{N} \big) \subseteq X(\widetilde{M}/\widetilde{N} ) $$ by definition, one concludes $X(\widetilde{M^{\ddag}}/\widetilde{N} ) = X(\widetilde{M}/\widetilde{N} )$.
 The result follows from induction on $\mathfrak s(M)$.
\end{proof}
	
\section{Global theory for embeddings of quadratic lattices}\label{sec4}

We now explain the main ingredients in the global theory of quadratic lattices that will be used in the proof of our main theorem.

As in the classical case of a ring of integers in a number field, using the adelic language and the genus theory turns out to be an appropriate way to study quadratic lattices over a general Dedekind domain. This approach has been used recently in \cite{HuLiuTian23} to study the integral Hasse principle for representations of quadratic lattices.

In this section, $R$ denotes a general Dedekind domain whose fraction field $F$ has characteristic $\mathrm{char}(F)\neq 2$.

\subsection{Adelic language and genus theory}\label{sec4.1}

For the reader's convenience, let us introduce some notations and recall some definitions that will be frequently used in this section.

We denote by $\Spm(R)$ the set of maximal ideals of $R$.  For each $\mathfrak{p}\in\Spm(R)$, in this paper we let $R_{\mathfrak{p}}$ (rather than $\widehat{R_{\fp}}$) denote the completion of $R$ at $\mathfrak{p}$ for simplicity, and let $F_{\mathfrak{p}}$ be the fraction field of $R_{\mathfrak{p}}$.

Let $M$ be a (non-degenerate) quadratic lattice over $R$ and put $V=F\otimes_RM$. Let $\theta: \mathrm{O}(V)\to F^\times/(F^\times)^2$ denote the spinor norm map for $V$.

 For each $\mathfrak{p}\in\Spm(R)$, write $V_\mathfrak p=F_\mathfrak p\otimes_F V$ and $M_\mathfrak p= R_\mathfrak p\otimes_R M$. As already mentioned in the introduction, we have the orthogonal group $\mathrm{O}(V_\mathfrak p)$ and the special orthogonal group $\mathrm{O}^+(V_\mathfrak p)$ of $V_\mathfrak p$, and their integral versions
$\mathrm{O}(M_\mathfrak p)$ and $\mathrm{O}^+(M_\mathfrak p)$ for the $R_{\mathfrak{p}}$-lattice $M_{\mathfrak{p}}$. The spinor norm map for the $F_{\fp}$-space $V_{\fp}$ is denoted by $\theta_{\fp}: \rO(V_{\fp})\to F^\times_{\fp}/(F^\times_{\fp})^2$.

Let $\bfO(V)$ (resp. $\bfO^+(V)$) denote the \emph{orthogonal group} (resp. \emph{special orthogonal group}) of the quadratic space $V$ as an algebraic group (\cite[\S23, p.~348]{KMRT98}). The groups $\rO(V)$ and $\rO^+(V)$ can be viewed as the groups of $F$-points of $\bfO(V)$ and $\bfO^+(V)$ respectively.

Let $\rO_{\bfA}(V)$ denote the group of adelic points of $\bfO(V)$ over $R$, i.e., the restricted product of
$\{\rO(V_{\fp})\}_{\fp\in\Spm(R)}$ with respect to $\{\rO(M_{\fp})\}_{\fp\in\Spm(R)}$, given explicitly by
\[
\rO_{\bfA}(V):=\bigg\{(\sigma_{\fp})\in\prod_{\fp\in\Spm(R)}\rO(V_{\fp})\,\Big|\, \sigma_{\fp}(M_{\fp})=M_{\fp} \text{ for almost all } \fp\bigg\}\,.
\]This group depends only on the quadratic space $V$ and is independent of the choice of the lattice $M$. Via the diagonal embedding we may view $\rO(V)$ as a subgroup of $\rO_{\bfA}(V)$ (cf. \cite[101:4]{O'M00}).

The group of adelic points $\rO^+_{\bfA}(V)$ of $\bfO^+(V)$ is defined similarly.

We also define
\[
\rO_{\bfA}(M):=\prod_{\mathfrak p\in \Spm(R)} \mathrm{O}(M_\mathfrak p)\quad\text{and}\quad \rO^+_{\bfA}(M):=\prod_{\mathfrak p\in \Spm(R)} \mathrm{O}^+(M_\mathfrak p)\,.
\]These are clearly subgroups of $\rO_{\bfA}(V)$.

Let $\bfI_F$ denote the \emph{id\`ele group} of $F$ with respect to the places in $\Spm(R)$, i.e., $\bfI_F$ is the restricted product of $\{F_{\fp}^\times\}_{\fp\in\Spm(R)}$ with respect to $\{R_{\fp}^\times\}_{\fp\in\Spm(R)}$. Since  $F$ has characteristic $\neq 2$, for almost all $\fp\in\Spm(R)$ we have $2\in R_{\fp}^{\times}$ and $M_{\fp}$ is a unimodular $R_{\fp}$-lattice. For any such $\fp$ we have $\theta_{\fp}(\rO(M_{\fp}))\subseteq R_{\fp}^\times (F_{\fp}^\times)^2$, by Proposition \ref{modular-spinor}. Therefore, the local spinor norm maps induce an \emph{adelic spinor norm map} $\theta_{\bfA}:\rO_{\bfA}(V)\to \bfI_F/\bfI_F^2$. For any subset $X\subseteq\rO_{\bfA}(V)$, we regard $\theta_{\bfA}(X)$ as a subset of $\bfI_F$, which is a union of cosets of $\bfI_F^2$ in $\bfI_F$.

We put
\[
\rO'_{\bfA}(V):=\rO^+_{\bfA}(V)\cap\ker\big(\theta_{\bfA}: \rO_{\bfA}(V)\longrightarrow\bfI_F/\bfI_F^2\big).
\]
Note that $\rO'_{\bfA}(V)$ contains the commutator subgroup of $\rO_{\bfA}(V)$. Hence, any subgroup $S\le \rO_{\bfA}(V)$ containing $\rO'_{\bfA}(V)$ is normal in $\rO_{\bfA}(V)$ and the quotient group $\rO_{\bfA}(V)/S$ is an abelian 2-torsion group.

\medskip

For every $\underline{\sigma}=(\sigma_{\fp})\in\rO_{\bfA}(V)$, the same argument as in \cite[\S\,101.D, p.~297]{O'M00} shows that there is a unique lattice $K$ on $V$ such that $K_{\fp}=\sigma_{\fp}M_{\fp}$ for all $\fp\in\Spm(R)$. We denote this lattice $K$ by $\underline{\sigma}M$. From the definition it is clear that
$\underline{\sigma}M=M$ iff $\underline{\sigma}\in\rO_{\bfA}(M)$.

\begin{defn}\label{defn4.1} With notation as above, the \emph{genus} $\gen(M)$ of $M$ is defined as the orbit of $M$ under the action of $\rO_{\bfA}(V)$, i.e.,
\[
\gen(M):= \{\underline{\sigma}M\,|\,\underline{\sigma}\in\rO_{\bfA}(V)\}\,.
\]The orbits of $M$ under the actions of the subgroups $\rO(V)\rO'_{\bfA}(V)$, $\rO^+(V)\rO'_{\bfA}(V)$, $\rO(V)$ and $\rO^+(V)$, denoted by
$\spn(M)$, $\spn^+(M)$, $\cls(M)$ and $\cls^+(M)$, are called the \emph{spinor genus}, \emph{proper spinor genus}, \emph{class} and \emph{proper class} of $M$ respectively.
\end{defn}

\begin{remark}\label{orb+}  For any $\fp\in\Spm(R)$, one has  $\tau_{u}\in\rO(M_{\fp})$ for any norm generator $u$ of $M_{\fp}$. From this one concludes that $\mathrm{gen}(M)$ is also the orbit of $M$ under the action of  $\mathrm{O}_{\mathbf A}^{+}(V)$.
\end{remark}

It is clear that $\spn^+(M)\subseteq\spn(M)$, $\cls^+(M)\subseteq\cls(M)$,
\[
\cls(M)\subseteq \spn(M)\subseteq \gen(M)\quad \text{and}\quad \mathrm{cls}^+(M)\subseteq \mathrm{spn}^+(M) \subseteq \mathrm{gen}(M).
\]In the case where  $F$ is a global field and $\Spm(R)$ contains almost all places of $F$, it is well known that the number of classes,  the number of proper classes, the number of spinor genera, and the number of proper spinor genera in $\gen(M)$ are all finite (cf. \cite[103:4]{O'M00}). However, these numbers may not be finite in general.

Letting the double coset $\mathrm{O}(V) \underline{\sigma} \rO_{\bfA}(M)$ correspond to the class $\mathrm{cls}(\underline{\sigma}(M))$ establishes a natural bijection
\[
\mathrm{O}(V) \big\backslash \mathrm{O}_{\mathbf A}(V)\big/ \rO_{\bfA}(M) \overset{\sim}{\longrightarrow}\{\cls(L)\,|\,L\in\gen(M)\}
\] between the double quotient of $\bfO_{\bfA}(V)$ modulo the subgroups $\rO(V), \prod_{\mathfrak p\in \Spm(R)} \mathrm{O}(M_\mathfrak p)$
and the set of lattice classes in $\gen(M)$. Similarly, there is a natural bijection
\[
\begin{split}
\mathrm{O}^+(V) \big\backslash \mathrm{O}^+_{\mathbf A}(V)\big/\rO_{\bfA}^+(M) &\overset{\sim}{\longrightarrow}\{\cls^+(L)\,|\,L\in\gen(M)\}\,,\\
\mathrm{O}^+(V) \underline{\sigma}\rO^+_{\bfA}(M)&\longmapsto \cls^+(\underline{\sigma}M).
\end{split}
\]
We may also consider spinor genera and proper spinor genera contained in $\gen(M)$. Analogously, we have the natural bijections
\begin{equation}\label{spin-genus}
\begin{split}
\big(\mathrm{O}(V)\rO'_{\bfA}(V)\big) \big\backslash \mathrm{O}_{\mathbf A}(V)\big/ \rO_{\bfA}(M) &\overset{\sim}{\longrightarrow}\{\spn(L)\,|\,L\in\gen(M)\}\,,\\
\big(\mathrm{O}^+(V)\rO'_{\bfA}(V)\big) \big\backslash \mathrm{O}^+_{\mathbf A}(V)\big/\rO^+_{\bfA}(M) &\overset{\sim}{\longrightarrow}\{\spn^+(L)\,|\,L\in\gen(M)\}.
\end{split}
\end{equation}
Since $\mathrm{O}(V)\rO'_{\bfA}(V)\rO_{\bfA}(M)$ and $\mathrm{O}^+(V)\rO'_{\bfA}(V)\rO^+_{\bfA}(M)$
are normal subgroups in $\rO_{\bfA}(V)$, the double quotients appearing in the two bijections in \eqref{spin-genus} can be rewritten as the quotient groups
\[
\rO_{\bfA}(V)\big/\big(\rO(V)\rO'_{\bfA}(V)\rO_{\bfA}(M)\big)\quad\text{and}\quad \rO_{\bfA}^+(V)\big/\big(\mathrm{O}^+(V)\rO'_{\bfA}(V)\rO^+_{\bfA}(M)\big),
\]
which are abelian 2-torsion groups. Thus, via the bijections in \eqref{spin-genus}, the spinor genera contained in a fixed genus $\gen(M)$ form an abelian 2-torsion group, and similarly for the proper spinor genera in $\gen(M)$.

\subsection{Spinor genera and classes containing a given sublattice}\label{sec4.2}

Now let $N$ be a quadratic lattice in $V=F\otimes_{R}M$. As analogs of the sets $X(M/N)$ and $X^+(M/N)$, we can define the set of \emph{adelic transporters} $X_{\bfA}(M/N)$ and the set  of \emph{proper adelic transporters} $X^+_{\bfA}(M/N)$ of $N$ in $M$  by
\[
X_{\mathbf A}(M/N)= \{ \underline{\sigma} \in \mathrm{O}_{\mathbf A}(V): N \subseteq \underline{\sigma}M \} \quad\text{and} \quad X_{\mathbf A}^+(M/N)= \{ \underline{\sigma} \in \mathrm{O}_{\mathbf A}^+(V): N \subseteq \underline{\sigma}M \}.
\]When $N\subseteq M$ is a sublattice, we have clearly
\begin{equation}\label{4.2.1}
\begin{split}
  \rO_{\bfA}(M)&\subseteq X_{\bfA}(M/N)=X_{\bfA}(M/N)\rO_{\bfA}(M)\,,\\
  \text{and }\ \rO^+_{\bfA}(M)&\subseteq X^+_{\bfA}(M/N)=X^+_{\bfA}(M/N)\rO^+_{\bfA}(M).
\end{split}
\end{equation}
For any lattice $L$ on $V$, let us write $N\hookrightarrow \spn(L)$ (resp. $N\hookrightarrow\spn^+(L)$) and say that $N$ \emph{embeds into} $\spn(L)$ (resp. $\spn^+(L)$) if $N$ is contained in some lattice in $\spn(L)$ (resp. in $\spn^+(L)$).
%

\begin{lemma}[{\cite[Lemma\;2.3]{HSX98}}]\label{spinor-N}
  With notation as above, let $\underline{\tau}\in \rO_{\bfA}(V)$.

  Then $N\hookrightarrow\spn(\underline{\tau}M)$ if and only if $\underline{\tau}\in \rO(V)\rO'_{\bfA}(V)X_{\bfA}(M/N)$.
  Similarly, if $\underline{\tau}\in \rO^+_{\bfA}(V)$, then $N\hookrightarrow\spn^+(\underline{\tau}M)$ if and only if $\underline{\tau}\in \rO^+(V)\rO'_{\bfA}(V)X^+_{\bfA}(M/N)$.
\end{lemma}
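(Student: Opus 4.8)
The statement is a lattice-embedding analogue of the classical description of the spinor genus, and the natural strategy is to reduce the global condition "$N$ embeds into some lattice in $\spn(\underline{\tau}M)$" to a purely group-theoretic membership statement by tracking lattices through the adelic group action. I would first unwind the definitions: by Definition \ref{defn4.1}, $\spn(\underline{\tau}M)$ consists of the lattices $\underline{\rho}(\underline{\tau}M)$ with $\underline{\rho}\in\rO(V)\rO'_{\bfA}(V)$, and every lattice on $V$ is of the form $\underline{\lambda}M$ for a unique coset $\underline{\lambda}\rO_{\bfA}(M)$. So $N\hookrightarrow\spn(\underline{\tau}M)$ means precisely that there exists $\underline{\rho}\in\rO(V)\rO'_{\bfA}(V)$ with $N\subseteq\underline{\rho}\,\underline{\tau}M$, i.e.\ that $\underline{\rho}\,\underline{\tau}\in X_{\bfA}(M/N)$ for some such $\underline{\rho}$. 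The key point making this clean is that membership "$N\subseteq\underline{\sigma}M$" depends only on the lattice $\underline{\sigma}M$, hence only on the coset $\underline{\sigma}\rO_{\bfA}(M)$, and by \eqref{4.2.1} the set $X_{\bfA}(M/N)$ is stable under right multiplication by $\rO_{\bfA}(M)$.

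Concretely, I would argue as follows. For the "only if" direction: if $N\hookrightarrow\spn(\underline{\tau}M)$, pick $\underline{\rho}\in\rO(V)\rO'_{\bfA}(V)$ with $N\subseteq\underline{\rho}\,\underline{\tau}M$; then $\underline{\rho}\,\underline{\tau}\in X_{\bfA}(M/N)$ by definition, so $\underline{\tau}=\underline{\rho}^{-1}(\underline{\rho}\,\underline{\tau})\in(\rO(V)\rO'_{\bfA}(V))^{-1}X_{\bfA}(M/N)=\rO(V)\rO'_{\bfA}(V)X_{\bfA}(M/N)$, using that $\rO(V)\rO'_{\bfA}(V)$ is a group (it is a subgroup of $\rO_{\bfA}(V)$ since $\rO'_{\bfA}(V)$ is normal in $\rO_{\bfA}(V)$ — it contains the commutator subgroup, as remarked after its definition in \S\ref{sec4.1}). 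For the "if" direction: if $\underline{\tau}=\underline{\rho}\,\underline{\xi}$ with $\underline{\rho}\in\rO(V)\rO'_{\bfA}(V)$ and $\underline{\xi}\in X_{\bfA}(M/N)$, then $N\subseteq\underline{\xi}M$, so $N\subseteq\underline{\rho}^{-1}\underline{\tau}M$, i.e.\ $N$ is contained in a lattice of $\spn(\underline{\tau}M)$ — one must note here that $\underline{\rho}^{-1}\underline{\tau}M$ and $\underline{\tau}M$ have the same spinor genus because $\underline{\rho}^{-1}\in\rO(V)\rO'_{\bfA}(V)$, which is exactly the group defining the spinor-genus equivalence; strictly speaking, $\spn(\underline{\rho}^{-1}\underline{\tau}M)=\spn(\underline{\tau}M)$ since $\underline{\tau}M=\underline{\rho}(\underline{\rho}^{-1}\underline{\tau}M)$ lies in the $\rO(V)\rO'_{\bfA}(V)$-orbit of $\underline{\rho}^{-1}\underline{\tau}M$.

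The proper (i.e.\ $+$) version is verbatim the same argument with $\rO$ replaced by $\rO^+$ throughout, using that $\rO'_{\bfA}(V)\subseteq\rO^+_{\bfA}(V)$ so that $\rO^+(V)\rO'_{\bfA}(V)$ is a subgroup of $\rO^+_{\bfA}(V)$, and that $X^+_{\bfA}(M/N)=X_{\bfA}(M/N)\cap\rO^+_{\bfA}(V)$ is stable under right multiplication by $\rO^+_{\bfA}(M)$ by \eqref{4.2.1}. Since this is cited as \cite[Lemma 2.3]{HSX98}, I expect the write-up to be short; the only genuinely substantive point — and hence the "main obstacle," though it is minor — is the bookkeeping that "$N\subseteq\underline{\sigma}M$" is a property of the lattice $\underline{\sigma}M$ alone, together with checking that $\rO(V)\rO'_{\bfA}(V)$ (resp.\ $\rO^+(V)\rO'_{\bfA}(V)$) is a genuine subgroup so that one may freely invert and absorb its elements. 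Everything else is formal manipulation of double cosets.
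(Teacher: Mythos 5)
Your proof is correct and matches the paper's approach: the paper simply writes ``This is immediate from the relevant definitions,'' and your write-up supplies exactly the definition-unwinding that is meant, including the two genuine checks (that $\rO(V)\rO'_{\bfA}(V)$ and $\rO^+(V)\rO'_{\bfA}(V)$ are subgroups via normality of $\rO'_{\bfA}(V)$, and that membership in $X_{\bfA}(M/N)$ depends only on the lattice $\underline{\sigma}M$).
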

\begin{proof}
  This is immediate from the relevant definitions.
\end{proof}

By Lemma \ref{spinor-N}, we have natural bijections
\[
  \begin{split}
    \{\spn(L)\,|\, L\in\gen(M),\; N \hookrightarrow \spn(L)\}&\cong (\rO(V)\rO'_{\bfA}(V))\big\backslash\rO(V)\rO'_{\bfA}(V)X_{\bfA}(M/N)\big/\rO_{\bfA}(M),\\
    \{\spn^+(L)\,|\, L\in\gen(M),\; N \hookrightarrow \spn^+(L)\}&\cong (\rO^+(V)\rO'_{\bfA}(V))\big\backslash\rO^+(V)\rO'_{\bfA}(V)X^+_{\bfA}(M/N)\big/\rO^+_{\bfA}(M).
  \end{split}
\]
When $2\in R^\times$, the double quotient in the second  bijection has a natural group structure, by Theorem \ref{relspinor} below, which is an analog of \cite[Theorem 2.1]{HSX98}.

\begin{thm} \label{relspinor}
 Suppose that $2\in R^\times$. Then $\mathrm{O}'_{\bfA}(V) X^+_{\mathbf A} (M/N)$ is a (normal) subgroup of $\mathrm{O}^+_{\mathbf A}(V)$.
\end{thm}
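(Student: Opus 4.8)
The plan is to reduce the statement to the assertion that $\theta_{\bfA}\big(X^+_{\bfA}(M/N)\big)$ is a subgroup of $\bfI_F/\bfI_F^2$, and then to deduce the latter from the local Proposition \ref{proposition4}. First I would note that, since $\rO'_{\bfA}(V)$ contains the commutator subgroup of $\rO_{\bfA}(V)$, any subgroup of $\rO_{\bfA}(V)$ containing $\rO'_{\bfA}(V)$ is automatically normal in $\rO_{\bfA}(V)$, hence in $\rO^+_{\bfA}(V)$; since $\mathrm{id}\in X^+_{\bfA}(M/N)$, the subset $\rO'_{\bfA}(V)X^+_{\bfA}(M/N)$ contains $\rO'_{\bfA}(V)$, so it suffices to prove that it is a subgroup of $\rO^+_{\bfA}(V)$. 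For this I would establish the identity
\[
\rO'_{\bfA}(V)X^+_{\bfA}(M/N)=\rO^+_{\bfA}(V)\cap\theta_{\bfA}^{-1}\big(\theta_{\bfA}(X^+_{\bfA}(M/N))\big).
\]
The inclusion ``$\subseteq$'' is immediate from $\rO'_{\bfA}(V)=\rO^+_{\bfA}(V)\cap\ker\theta_{\bfA}$. For ``$\supseteq$'', if $\underline\tau\in\rO^+_{\bfA}(V)$ satisfies $\theta_{\bfA}(\underline\tau)=\theta_{\bfA}(\underline\sigma)$ for some $\underline\sigma\in X^+_{\bfA}(M/N)$, then $\underline\tau\underline\sigma^{-1}\in\rO^+_{\bfA}(V)\cap\ker\theta_{\bfA}=\rO'_{\bfA}(V)$, whence $\underline\tau\in\rO'_{\bfA}(V)X^+_{\bfA}(M/N)$. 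Given this identity, $\rO'_{\bfA}(V)X^+_{\bfA}(M/N)$ is the preimage under the group homomorphism $\theta_{\bfA}|_{\rO^+_{\bfA}(V)}$ of $\theta_{\bfA}(X^+_{\bfA}(M/N))$, so it is a subgroup as soon as $\theta_{\bfA}(X^+_{\bfA}(M/N))$ is.

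Next I would prove that $\theta_{\bfA}(X^+_{\bfA}(M/N))$ is a subgroup of $\bfI_F/\bfI_F^2$, using the local--global description of $X^+_{\bfA}(M/N)$: for $\underline\sigma=(\sigma_\fp)\in\rO^+_{\bfA}(V)$ the condition $N\subseteq\underline\sigma M$ is equivalent to $N_\fp\subseteq\sigma_\fp M_\fp$ for all $\fp\in\Spm(R)$, and since $N\subseteq M$ one has $\rO^+(M_\fp)\subseteq X^+(M_\fp/N_\fp)$; hence $X^+_{\bfA}(M/N)$ is precisely the restricted product of the sets $X^+(M_\fp/N_\fp)$ with respect to the subgroups $\rO^+(M_\fp)$. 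Now take $\underline a=\theta_{\bfA}(\underline\sigma)$ and $\underline b=\theta_{\bfA}(\underline\rho)$ with $\underline\sigma=(\sigma_\fp),\underline\rho=(\rho_\fp)\in X^+_{\bfA}(M/N)$. Since $2\in R^\times$, every $R_\fp$ is a complete discrete valuation ring in which $2$ is invertible, so Proposition \ref{proposition4} applies and each $\theta_\fp(X^+(M_\fp/N_\fp))$ is a subgroup of $F_\fp^\times/(F_\fp^\times)^2$; thus for every $\fp$ there is $\mu_\fp\in X^+(M_\fp/N_\fp)$ with $\theta_\fp(\mu_\fp)=\theta_\fp(\sigma_\fp)\theta_\fp(\rho_\fp)^{-1}$. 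For all but finitely many $\fp$ one has $\sigma_\fp,\rho_\fp\in\rO^+(M_\fp)$, so $\theta_\fp(\sigma_\fp)\theta_\fp(\rho_\fp)^{-1}\in\theta_\fp(\rO^+(M_\fp))$ and $\mu_\fp$ may be chosen in $\rO^+(M_\fp)$ for those $\fp$. Then $\underline\mu=(\mu_\fp)$ lies in $\rO^+_{\bfA}(V)$ and satisfies $N\subseteq\underline\mu M$, i.e. $\underline\mu\in X^+_{\bfA}(M/N)$, with $\theta_{\bfA}(\underline\mu)=\underline a\,\underline b^{-1}$. As $\theta_{\bfA}(X^+_{\bfA}(M/N))$ is nonempty (it contains $\theta_{\bfA}(\mathrm{id})$) and closed under $(\underline a,\underline b)\mapsto\underline a\underline b^{-1}$, it is a subgroup, which completes the proof.

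These arguments are largely bookkeeping; the only substantive input is the local Proposition \ref{proposition4}, whose proof via Lemmas \ref{lemma1} and \ref{reduction2} is where the real work lies. The one point requiring care is that $\theta_{\bfA}$ is well defined on $\rO^+_{\bfA}(V)\supseteq X^+_{\bfA}(M/N)$ precisely because $\theta_\fp(\rO(M_\fp))\subseteq R_\fp^\times(F_\fp^\times)^2$ for almost all $\fp$ (Proposition \ref{modular-spinor}), and this is exactly what legitimizes choosing $\mu_\fp\in\rO^+(M_\fp)$ at almost all $\fp$ so that $\underline\mu$ lands in the restricted product $X^+_{\bfA}(M/N)$.
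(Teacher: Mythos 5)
Your proof is correct and follows the paper's strategy at a high level: reduce to showing that $\theta_{\bfA}(X^+_{\bfA}(M/N))$ is a subgroup of $\bfI_F/\bfI_F^2$ (you make this reduction explicit via the preimage identity, which the paper leaves tacit), and then patch together local elements furnished by Proposition~\ref{proposition4}. The one place you diverge is the verification that the patched family $(\mu_\fp)$ lands in the restricted product: the paper checks $a_\fp\in\theta_\fp(\rO^+(M_\fp))$ at almost all $\fp$ by noting $a_\fp\in R_\fp^\times(F_\fp^\times)^2$ and then splitting into cases according to whether $V_\fp$ is isotropic, invoking Proposition~\ref{modular-spinor} in the isotropic unimodular case and Corollary~\ref{modular} in the anisotropic case. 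You instead observe that for almost all $\fp$ the original components $\sigma_\fp,\rho_\fp$ already lie in $\rO^+(M_\fp)$ by the very definition of the restricted product, so $\theta_\fp(\sigma_\fp)\theta_\fp(\rho_\fp)^{-1}$ belongs to the subgroup $\theta_\fp(\rO^+(M_\fp))$ automatically, and $\mu_\fp$ may be chosen there. This dispenses with Proposition~\ref{modular-spinor} and Corollary~\ref{modular} at this step and is slightly more elementary; both arguments are valid, but yours is the more direct one.
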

\begin{proof}
It is sufficient to show that $\theta_{\mathbf{A}}(X^+_{\mathbf A}(M/N))$ is a subgroup of $\mathbf{I}_F$.

 For all $\underline{\rho_1}=(\rho_{1,\fp}),\underline{\rho_2}=(\rho_{2,\fp})\in X^+_\bfA(M/N)$, we have $a_\fp\coloneqq\theta_\fp(\rho_{1,\fp})\theta_\fp(\rho_{2,\fp})\in \theta_\fp(X^+(M_\fp/N_\fp))$ for all $\fp$, by Proposition \ref{proposition4}. Moreover, $a_\fp\in R_\fp^\times$ and $M_\fp$ is unimodular for almost all $\fp$. For any such $\fp$, if $F_\fp \otimes_{R_\fp} M_\fp$ is isotropic, one has $a_\fp\in R_\fp^\times=\theta_{\fp}(\mathrm{O}^+(M_{\fp})) \subseteq \theta_{\fp}(X^+(M_{\fp}/N_{\fp}))$, where the equality is ensured by Proposition \ref{modular-spinor}; while if $F_{\fp} \otimes_{R_\fp} M_\fp$ is anisotropic, then as $\mathrm{O}^+(M_\fp)=\mathrm{O}^+(V_\fp)=X^+(M_\fp/N_\fp)$ by Corollary \ref{modular}, one has $a_\fp\in \theta_{\fp}(X^+(M_{\fp}/N_{\fp}))=\theta_{\fp}(\mathrm{O}^+(M_{\fp}))$. Therefore we can find $\underline{\rho}=(\rho_\fp)\in X^+_\bfA(M/N)$ with $\theta_\fp(\rho_\fp)=a_\fp$ for all $\fp$, and thus $\theta_\bfA(\underline{\rho})=\theta_\bfA(\underline{\rho_1})\theta_\bfA(\underline{\rho_2})$, showing that $\theta_{\mathbf{A}}(X^+_{\mathbf{A}}(M/N))$ is closed under multiplication. Similarly one can see that it is also closed under inversion.
\end{proof}

\begin{remark}
Similar arguments as in the proof of Theorem \ref{relspinor} show that $\theta_{\mathbf{A}}(X_{\mathbf{A}}(M/N))$  is a subgroup of $\bfI_F$. But this may not be sufficient to guarantee that $\mathrm{O}'_{\bfA}(V)X_{\mathbf A}(M/N)$ is a subgroup of $\rO_{\bfA}(V)$, because an element in $\rO_{\bfA}(V)$ with trivial spinor norm need not lie in the subgroup $\rO'_{\bfA}(V)$ (which consists only of elements with trivial spinor norm in $\rO^+_{\bfA}(V)$).

From this point of view, the set of proper adelic transporters behaves better than the set of all adelic transporters.

See also Remark \ref{remark5.2} for a similar phenomenon.
\end{remark}

We will see that  spinor genera (resp. proper spinor genera) of lattices on $V$  can be replaced by their classes (resp. proper classes) if
the strong approximation property holds for the spin group $\mathbf{Spin}(V)$ of the quadratic space $V=F\otimes_RM$. (See e.g. \cite[\S\;23, p.349]{KMRT98} for the definition of the spin group.)

\begin{defn}\label{SAdef}
Let $G$ be a connected linear algebraic group over $F$. We can choose a finite subset $T_0\subseteq\Spm(R)$ and a group scheme $\mathcal{G}$ defined over $B:=\mathrm{Spec}(R)\setminus T_0$ such that $\mathcal{G}\times_{B}F\cong G$. We can define the group of adelic points $G_{\bfA}$ of $G$ to be the restricted product of
$\{G(F_{\fp})\}_{\fp\in\Spm(R)}$ with respect to $\{\mathcal{G}(R_{\fp})\}_{\fp\in\Spm(R)\setminus T_0}$. This definition is independent of the choice of the integral model $\mathcal{G}/B$.

We say that $G$ satisfies \emph{strong approximation} over $R$ if the diagonal image of $G(F)$ is dense in $G_{\bfA}$ in the adelic topology, or more explicitly, if  for every nonempty finite subset $T\subseteq\Spm(R)$ containing $T_0$ and every nonempty $\fp$-adic open subset $U_{\fp}\subseteq G(F_{\fp})$ for each $\fp\in T$, one has
  \[
  G(F)\cap \bigg(\prod_{\fp\in T}U_{\fp}\times\prod_{\fp\in\Spm(R)\setminus T}\mathcal{G}(R_{\fp})\bigg)\neq\emptyset
  \]when $G(F)$ is viewed as a subset of $\prod_{\fp\in\Spm(R)}G(F_{\fp})$ via the diagonal embedding.
\end{defn}

The following result has been essentially proved in \cite[Theorem\;3.2]{HuLiuTian23}.
\begin{lemma}\label{SAcoro}
  Suppose that the spin group $G:=\mathbf{Spin}(V)$ satisfies strong approximation over $R$.

  Then for every lattice $L$ with $F\otimes_RL=V$ we have
  \[
  \rO(V)\rO_{\bfA}(L)=\rO(V)\rO'_{\bfA}(V)\rO_{\bfA}(L)\quad\text{and}\quad \rO^+(V)\rO^+_{\bfA}(L)=\rO^+(V)\rO'_{\bfA}(V)\rO^+_{\bfA}(L).
  \]Consequently, $\spn(L)=\cls(L)$ and $\spn^+(L)=\cls^+(L)$.
\end{lemma}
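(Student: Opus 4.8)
The plan is to reduce both displayed identities to the single inclusion
$\rO'_{\bfA}(V)\subseteq\rO^+(V)\rO^+_{\bfA}(L)$, and then to derive this from strong approximation for $\mathbf{Spin}(V)$ by a lifting-and-approximation argument. First I would note that in each identity the inclusion ``$\subseteq$'' is free (since $1\in\rO'_{\bfA}(V)$), and that, because $\rO^+(V)\rO^+_{\bfA}(L)$ is stable under left multiplication by $\rO^+(V)$ and right multiplication by $\rO^+_{\bfA}(L)$, the reverse inclusion in the second identity is equivalent to $\rO'_{\bfA}(V)\subseteq\rO^+(V)\rO^+_{\bfA}(L)$; the first identity then follows from $\rO'_{\bfA}(V)\subseteq\rO^+(V)\rO^+_{\bfA}(L)\subseteq\rO(V)\rO_{\bfA}(L)$. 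The ``consequently'' statements are purely formal: if $\underline\sigma\in\rO(V)\rO'_{\bfA}(V)$, then by the first identity $\underline\sigma=\gamma\underline u$ with $\gamma\in\rO(V)$ and $\underline u\in\rO_{\bfA}(L)$, so $\underline\sigma L=\gamma L\in\cls(L)$, giving $\spn(L)=\cls(L)$; the argument for $\spn^+(L)=\cls^+(L)$ is identical, using the second identity.

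Next I would prove $\rO'_{\bfA}(V)\subseteq\rO^+(V)\rO^+_{\bfA}(L)$. Fix $\underline\rho=(\rho_{\fp})\in\rO'_{\bfA}(V)=\rO^+_{\bfA}(V)\cap\ker\theta_{\bfA}$, and lift it to an adelic point of $G:=\mathbf{Spin}(V)$. Since $\theta_{\fp}(\rho_{\fp})\in(F^\times_{\fp})^2$ for every $\fp$, the exact sequence $1\to\mu_2\to G\to\mathbf{O}^+(V)\to1$ and the identification $H^1(F_{\fp},\mu_2)=F^\times_{\fp}/(F^\times_{\fp})^2$ show that $\rho_{\fp}$ lies in the image of the (open onto its image) map $G(F_{\fp})\to\rO^+(V_{\fp})$, whose image is $\ker\theta_{\fp}$. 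For all but finitely many $\fp$, $L_{\fp}$ is unimodular, $2\in R_{\fp}^\times$, $G$ extends to a smooth model $\mathcal{G}$ over $R_{\fp}$ with $1\to\mu_2\to\mathcal{G}\to\mathbf{O}^+(L_{\fp})\to1$, and—using $\mathrm{Pic}(R_{\fp})=0$, so $H^1(R_{\fp},\mu_2)=R^\times_{\fp}/(R^\times_{\fp})^2$, together with the fact that a unit of $R_{\fp}$ that is a square in $F_{\fp}$ is already a square in $R_{\fp}$—the image of $\mathcal{G}(R_{\fp})$ in $\rO^+(V_{\fp})$ is exactly $\rO^+(L_{\fp})\cap\ker\theta_{\fp}$, which contains $\rho_{\fp}$ at such $\fp$ (as $\underline\rho\in\rO^+_{\bfA}(L)$ there). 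Hence $\underline\rho$ lifts to some $\underline{\widetilde\rho}\in G_{\bfA}$ with $\widetilde\rho_{\fp}\in\mathcal{G}(R_{\fp})$ outside a finite set $T\subseteq\Spm(R)$, which I may enlarge so as to contain $T_0$ and be nonempty (if $T$ can be taken empty, $\underline\rho\in\rO^+_{\bfA}(L)$ already).

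Then I would invoke strong approximation for $G$ over $R$ as in Definition \ref{SAdef}. For each $\fp\in T$ set $U_{\fp}\subseteq G(F_{\fp})$ to be the preimage of the open subset $\rho_{\fp}\rO^+(L_{\fp})\subseteq\rO^+(V_{\fp})$; then $U_{\fp}$ is a nonempty (it contains $\widetilde\rho_{\fp}$) open subset. By strong approximation there is $\widetilde\gamma\in G(F)$ lying in $\prod_{\fp\in T}U_{\fp}\times\prod_{\fp\notin T}\mathcal{G}(R_{\fp})$; let $\gamma\in\rO^+(V)$ be its image. For $\fp\in T$ this yields $\gamma^{-1}\rho_{\fp}\in\rO^+(L_{\fp})$, and for $\fp\notin T$ it yields $\gamma\in\rO^+(L_{\fp})$ and $\rho_{\fp}\in\rO^+(L_{\fp})$, so again $\gamma^{-1}\rho_{\fp}\in\rO^+(L_{\fp})$. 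Therefore $\gamma^{-1}\underline\rho\in\rO^+_{\bfA}(L)$, i.e. $\underline\rho\in\rO^+(V)\rO^+_{\bfA}(L)$, completing the argument.

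The step I expect to be the main obstacle is the identification, at almost all $\fp$, of $\rO^+(L_{\fp})\cap\ker\theta_{\fp}$ with the image of $\mathcal{G}(R_{\fp})$ in $\rO^+(V_{\fp})$—equivalently, of $\rO'_{\bfA}(V)$ with the image of $G_{\bfA}$ in $\rO^+_{\bfA}(V)$—since this is where unimodularity of $L_{\fp}$, the triviality of $\mathrm{Pic}(R_{\fp})$, and the local behaviour of the spinor norm all come into play. Everything else is routine strong-approximation bookkeeping, and I would present this essentially following \cite[Theorem\;3.2]{HuLiuTian23}.
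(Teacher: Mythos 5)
Your proposal is correct and follows essentially the same route as the paper: reduce both identities to $\rO'_{\bfA}(V)\subseteq\rO^+(V)\rO^+_{\bfA}(L)$, identify $\rO'_{\bfA}(V)$ with the image of $G_{\bfA}=\mathbf{Spin}(V)_{\bfA}$ via the exact sequence $1\to\mu_2\to G\to\bfO^+(V)\to 1$, and apply strong approximation against the open subgroup $\phi_{\bfA}^{-1}(\rO^+_{\bfA}(L))$. The one thing you do that the paper compresses is to spell out why $\rO'_{\bfA}(V)=\phi_{\bfA}(G_{\bfA})$ at the level of restricted products (local lifting plus integral lifting at almost all $\fp$ via $\mathrm{Pic}(R_{\fp})=0$ and unimodularity), which the paper dispatches with a commutative diagram of exact rows.
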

\begin{proof}
  Let $\phi: G\to\bfO^+(V)$ be the natural homomorphism which identifies $G=\mathbf{Spin}(V)$ with the simply connected cover of the special orthogonal group
  $\bfO^+(V)$. By taking Galois cohomology it induces the following commutative diagram with exact rows
\[
  \xymatrix{
G(F)  \ar[r]^{\phi} \ar[d] & \mathrm{O}^+(V)    \ar[r]^{\theta}  \ar[d]  &  F^*/F^{*2} \ar[d] \\
 G_{\mathbf A} \ar[r]^{\phi_{\mathbf A}} & \mathrm{O}_{\mathbf A}^+(V)  \ar[r]^{\theta_{\mathbf A}} & \mathbf I_F/ \mathbf I_F^2
}
\]and this shows in particular that $\rO'_{\bfA}(V)=\phi_{\bfA}(G_{\bfA})$. The inverse image $\phi_{\bfA}^{-1}(\rO_{\bfA}^+(L))$ is an open subgroup of $G_{\bfA}$, and its cosets in $G_{\bfA}$ have nonempty intersections with $G(F)$ by the strong approximation property of $G$. Hence
$G_{\bfA}=G(F)\cdot\phi^{-1}_{\bfA}(\rO_{\bfA}^+(L))$, and it follows that
\setcounter{equation}{+1}
\begin{equation}\label{4.2.2}
\rO'_{\bfA}(V)=\phi_{\bfA}(G_{\bfA})\subseteq\phi(G(F))\rO_{\bfA}^+(L)\subseteq \rO^+(V)\rO^+_{\bfA}(L).
\end{equation}
This implies the first assertion in the lemma.

Since $\spn(L)$ is the orbit of $L$ under $\rO(V)\rO'_{\bfA}(V)\rO_{\bfA}(L)$ and $\cls(L)$ is the orbit of $L$ under $\rO(V)\rO_{\bfA}(L)$, we can deduce from the first assertion that $\spn(L)=\cls(L)$. Similarly, $\spn^+(L)=\cls^+(L)$.
\end{proof}

\begin{remark}\label{grp}
  Assume that $\mathbf{Spin}(V)$ satisfies strong approximation over $R$. Combing Lemma \ref{SAcoro} with the discussions in the end of \S\;\ref{sec4.1}, we see that the classes of lattices contained in $\gen(M)$ form an abelian 2-torsion group, and similarly for the proper classes in $\gen(M)$. Moreover, we can further conclude that the classes (or proper classes) in which the given sublattice $N$ embeds form a subgroup, at least when $2\in R^\times$.
\end{remark}
	
\begin{remark}\label{rmkSA}
  Strong approximation over a general Dedekind domain $R$ for semisimple simply connected groups over its fraction field was established by Harder in \cite{Harder67} for quasi-split groups. This result is generalized by Gille in \cite[Corollaire 5.11]{G09}. (See also \cite[Théorème 2.2]{CT}.)

  The case we need  is the case of the spin group $\mathbf{Spin}(V)$ of the quadratic $F$-space $V$. If $V$ is isotropic  of dimension $\ge 3$, then this spin group is rational over $F$ by \cite{P79}. In this case, we can apply \cite[Corollaire 5.11]{G09} to deduce that $\mathbf{Spin}(V)$ satisfies strong approximation over $R$.
\end{remark}

\section{Proof of the main theorem and some examples} \label{proof}

The goal of this final section is to prove Theorem \ref{intro}, thus establishing that an integral Springer theorem holds for embeddings of quadratic lattices over $R$ under suitable assumptions. We shall also give examples to show that the theorem can fail without our extra assumptions.
%

As in \S\;\ref{sec4}, $R$ denotes a general Dedekind domain whose fraction field $F$ has characteristic $\mathrm{char}(F)\neq 2$. Let $M$ be a quadratic $R$-lattice and $V=F\otimes_RM$. Notations introduced in \S\;\ref{sec4.1} will be used.

\subsection{Norm principles for  global integral and adelic spinor norms} \label{norm}

In Theorem \ref{relative} we have obtained the norm principle for spinor norms of transporters between two lattices in the local case. Now we prove  a global integral version and an adelic version  of the norm principle. The adelic norm principle will play a key role in the proof of our main theorem. While the global norm principle is not directly related to our main theorem, we discuss it here for the sake of completeness.

Throughout this subsection, let $N$ be a (non-degenerate) sublattice of $M$.

For the discussion of the global version of Theorem \ref{relative}, we need to assume the strong approximation property of the spin group $\mathbf{Spin}(V)$.

\begin{prop} \label{glb-norm} Suppose that the spin group $\mathbf{Spin}(V)$ satisfies strong approximation over $R$.

Then
\[
 \theta (X^+(M/N)) = \{ \alpha \in \theta (\mathrm{O}^+(V)): \ \alpha \in \theta_{\fp} (X^+(M_\mathfrak p/N_{\mathfrak p} )) \ \ \text{for all $\mathfrak p\in \Spm(R)$} \}.
\] In particular, $\theta (X^+(M/N))$ is a subgroup of $F^\times$.
\end{prop}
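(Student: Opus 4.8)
The plan is to describe $\theta(X^+(M/N))$ by invoking strong approximation for $G:=\mathbf{Spin}(V)$ directly, rather than through Lemma~\ref{SAcoro}; the ``in particular'' clause will then be formal. One inclusion is immediate: if $\sigma\in X^+(M/N)\subseteq\mathrm{O}^+(V)$, then localizing $N\subseteq\sigma(M)$ at each $\fp$ shows $\sigma\in X^+(M_\fp/N_\fp)$, and since the spinor norm is compatible with base change of the base field, $\theta(\sigma)$ maps into $\theta_\fp(X^+(M_\fp/N_\fp))$ for every $\fp\in\Spm(R)$. So $\theta(X^+(M/N))$ is contained in the right-hand side.

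For the reverse inclusion I would fix $\alpha$ in the right-hand side and choose $\tau_0\in\mathrm{O}^+(V)$ with $\theta(\tau_0)=\alpha$ (possible since $\alpha\in\theta(\mathrm{O}^+(V))$). Let $\phi\colon G\to\bfO^+(V)$ be the canonical isogeny; the exact cohomology sequence attached to $1\to\mu_2\to\mathbf{Spin}(V)\to\bfO^+(V)\to1$ (as used in the proof of Lemma~\ref{SAcoro}, and equally valid over each $F_\fp$) gives $\phi(G(F))=\ker(\theta)$ and $\phi(G(F_\fp))=\ker(\theta_\fp)$. Set $N':=\tau_0^{-1}(N)$, a lattice on $V$. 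For each $\fp$, pick $\rho_\fp\in X^+(M_\fp/N_\fp)$ with $\theta_\fp(\rho_\fp)=\alpha$ in $F^\times_\fp/(F^\times_\fp)^2$; then $\tau_0^{-1}\rho_\fp\in\ker(\theta_\fp)$, so $\tau_0^{-1}\rho_\fp=\phi(g_\fp)$ for some $g_\fp\in G(F_\fp)$, and $\phi(g_\fp)^{-1}(N'_\fp)=\rho_\fp^{-1}(N_\fp)\subseteq M_\fp$. Thus $g_\fp$ lies in the set
\[
U_\fp:=\{\,h\in G(F_\fp)\,:\,\phi(h)^{-1}(N'_\fp)\subseteq M_\fp\,\},
\]
which is a nonempty, $\fp$-adically open subset of $G(F_\fp)$, the inclusion $\phi(h)^{-1}(N'_\fp)\subseteq M_\fp$ being equivalent to finitely many open conditions because $N'_\fp$ is a finitely generated $R_\fp$-module and $M_\fp$ is open in $V_\fp$. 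Moreover, outside a finite set $T\supseteq T_0$ one has $\tau_0(M_\fp)=M_\fp$ and $\phi(\mathcal{G}(R_\fp))\subseteq\mathrm{O}^+(M_\fp)$ for the chosen integral model $\mathcal{G}$ of $G$, so $\mathcal{G}(R_\fp)\subseteq U_\fp$ for all such $\fp$. Applying strong approximation for $G$ over $R$ to the open sets $U_\fp$, $\fp\in T$, produces $g\in G(F)$ with $g\in U_\fp$ for every $\fp$; hence $\phi(g)^{-1}(N'_\fp)\subseteq M_\fp$ for all $\fp$, and therefore $\phi(g)^{-1}(N')\subseteq M$ by the local--global principle for lattices. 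Then $\sigma_0:=\tau_0\,\phi(g)\in\mathrm{O}^+(V)$ satisfies $N=\tau_0(N')\subseteq\sigma_0(M)$, so $\sigma_0\in X^+(M/N)$, while $\theta(\sigma_0)=\theta(\tau_0)\,\theta(\phi(g))=\alpha$ since $\phi(g)\in\ker(\theta)$. This yields the displayed equality, and the last assertion follows because each $\theta_\fp(X^+(M_\fp/N_\fp))$ is a subgroup of $F^\times_\fp/(F^\times_\fp)^2$ by Proposition~\ref{proposition4} and $\theta(\mathrm{O}^+(V))$ is a subgroup of $F^\times/(F^\times)^2$, so the right-hand side is a subgroup of $F^\times$.

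The main obstacle I anticipate is the strong approximation step itself: one must check carefully that each $U_\fp$ is open and that $U_\fp\supseteq\mathcal{G}(R_\fp)$ for almost all $\fp$ (after spreading out $M$ and $\phi$), which is exactly where it matters that $N$ is an honest lattice, so that $N'_\fp$ is finitely generated, rather than merely a subspace. The remaining ingredients — base-change compatibility of spinor norms, surjectivity of $G(F_\fp)\to\ker(\theta_\fp)$, and the local--global principle $L\subseteq M\iff L_\fp\subseteq M_\fp$ for all $\fp$ — are standard.
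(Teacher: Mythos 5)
The proposal is correct, and its broad strategy matches the paper's: the only substantive input is strong approximation for $\mathbf{Spin}(V)$, used to produce a global element of $\ker(\theta)$ that ``corrects'' a naively chosen $\tau_0\in\mathrm{O}^+(V)$ with the right spinor norm into an actual element of $X^+(M/N)$. What you do differently is in the packaging of that step: the paper builds an adelic element $\underline{\tau}\in X^+_{\bfA}(M/N)$ with $\theta_{\bfA}(\underline{\tau})=\alpha\bfI_F^2$ (arguing as in Theorem~\ref{relspinor}, which in turn invokes Proposition~\ref{modular-spinor} and Corollary~\ref{modular} to control the almost-all-$\fp$ behaviour), observes that $\sigma^{-1}\underline{\tau}\in\mathrm{O}'_{\bfA}(V)$, and then applies the inclusion \eqref{4.2.2} from Lemma~\ref{SAcoro} to factor $\sigma^{-1}\underline{\tau}$ through $\ker(\theta)\cdot\mathrm{O}^+_{\bfA}(M)$. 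You instead work in the spin group itself, translating the requirement ``$\sigma_0\in X^+(M/N)$'' into the explicit $\fp$-adic open conditions $U_\fp=\{h\in G(F_\fp):\phi(h)^{-1}(N'_\fp)\subseteq M_\fp\}$ and invoking strong approximation as a density statement applied to $\prod_{\fp}U_\fp$; the almost-all-$\fp$ verification becomes the spreading-out check $\mathcal{G}(R_\fp)\subseteq U_\fp$. Both routes need to handle the ``almost all $\fp$'' bookkeeping somewhere, and they do it with different ingredients (local spinor-norm computations vs.\ spreading out the model), but the content is the same. Your version is more self-contained in that it bypasses the adelic transporter set $X^+_{\bfA}(M/N)$ entirely for this direction, at the cost of redoing, in concrete terms, what the paper has already encapsulated in \eqref{4.2.2}; and it correctly uses the standing hypothesis $N\subseteq M$ when verifying $\mathcal{G}(R_\fp)\subseteq U_\fp$. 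The ``in particular'' clause is handled identically, via Proposition~\ref{proposition4}.
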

\begin{proof}
    For any $\alpha \in \theta (\mathrm{O}^+(V))$, there is $\sigma\in \mathrm{O}^+(V)$ such that $\theta(\sigma)=\alpha (F^\times)^2$. On the other hand, if $\alpha$ satisfies the local conditions, then in the same way as in the proof of Theorem \ref{relspinor} one obtains $\underline{\tau}=(\tau_\fp)\in X_{\mathbf A}^+(M/N)$ such that $\theta_{\mathbf A}(\underline{\tau})=\alpha \bfI_F^2$, and so $\sigma^{-1} \underline{\tau} \in \rO'_{\bfA}(V)$.

    By \eqref{4.2.2} (cf. the proof of Lemma \ref{SAcoro}), there exist
    \[
    \gamma\in\ker(\theta:\rO^+(V)\to F^\times/(F^\times)^2)\quad\text{ and }\quad \underline{\delta}\in\rO_{\bfA}^+(M)
    \] such that $\sigma^{-1} \underline{\tau}=\gamma\underline{\delta}$.
Thus, using \eqref{4.2.1} we find that $\sigma \gamma \in X_{\bfA}^+(M/N)\rO_{\bfA}^+(M)=X_{\bfA}^+(M/N)$. Since $\rO^+(V)\cap X_{\bfA}^+(M/N)=X^+(M/N)$, we get $\sigma\gamma\in X^+(M/N)$ and hence
$$ \alpha (F^\times)^2=\theta(\sigma)=\theta(\sigma \gamma) \in \theta (X^+(M/N)).$$This proves the local-global description of  $\theta(X^+(M/N))$.

The group structure of $\theta (X^+(M/N))$ follows from the local-global relation and Proposition \ref{proposition4}.
\end{proof}

\begin{remark}\label{remark5.2}
  In Proposition \ref{glb-norm}, if we further assume that  there exists a reflection of spinor norm $1$ in $\rO(M_\fp)$ for every $\fp$ (e.g. if $M$ is unimodular as an $R$-lattice and $V$ is isotropic), then a similar argument as above shows that
  \begin{equation}\label{5.1.1}
  \theta (X(M/N)) = \{ \alpha \in \theta (\mathrm{O}(V)): \ \alpha \in \theta_{\fp} (X(M_\mathfrak p/N_{\mathfrak p} )) \ \ \text{for all $\mathfrak p\in \Spm(R)$} \},
  \end{equation}and in particular that $\theta(X(M/N))$ is a subgroup of $F^\times$. However, this local-global description
 for $\theta (X(M/N))$ does not hold in general, as the following counterexample indicates.

 Let $R=k[t]$ be a polynomial ring over a field $k$ with $\car(k)\neq2$, and $\fp_1=(p_1(t))$ and $\fp_2=(p_2(t))$ be two different places of $R$. Take $N=M$ to be the isotropic $R$-lattice $\langle p_1p_2,-p_1p_2,p_1p_2 \rangle$. Then by definition $\theta(X(M/N))=\theta(\rO(M))$ and $\theta(X^+(M/N))=\theta(\rO^+(M))$. Note that for any $\fp\neq\fp_1,\fp_2$, $M_\fp$ is unimodular and so $\theta_\fp(\rO(M_\fp))=\theta_\fp(\rO^+(M_\fp))=R_\fp^\times(F_\fp^\times)^2$ by Proposition \ref{modular-spinor}. On the other hand, $M_{\fp_i}$ is modular of scale $\fp_i$ for both $i=1,2$, so again by Proposition \ref{modular-spinor}, one has $\theta_{\fp_i}(\rO(M_{\fp_i}))=F_{\fp_i}^\times$ while $\theta_{\fp_i}(\rO^+(M_{\fp_i}))=R_{\fp_i}^\times(F_{\fp_i}^\times)^2$. By Proposition \ref{glb-norm}, we have
\[
\theta(\rO^+(M))=A^+\coloneqq\{\alpha\in F^\times: \alpha\in R_\fp^\times(F_\fp^\times)^2 \text{ for all }\fp\in\Spm(R)\}.
\]
Suppose we also have the local-global description
\[
\theta(\rO(M))=A\coloneqq\{\alpha\in F^\times: \alpha\in R_\fp^\times(F_\fp^\times)^2 \text{ for all }\fp\neq\fp_1,\fp_2\}.
\]
Then it would imply $[A:A^+]=[\theta(\rO(M)):\theta(\rO^+(M))]\leq2$. But this is not the case, because $p_1,p_2$ and their quotient $p_1/p_2$ all lie in $A\backslash A^+$. Therefore \eqref{5.1.1} fails in this case. 
\end{remark}

Let $E/F$ be a finite extension and let $S$ be the integral closure of $R$ in $E$. By a well known theorem of Krull and Akizuki, $S$ is a Dedekind domain as $R$ is. For any $R$-lattice, we put $\widetilde{L}=S\otimes_RL$.

An immediate consequence of Proposition \ref{glb-norm} is the following norm principle for $\theta (X^+(M/N))$ (and for $\theta (X(M/N))$ under an extra assumption).

\begin{coro}\label{coro5.2}
With the hypotheses  in Proposition $\ref{glb-norm}$, assume further that $2\in R^\times$.

Then
 $$  N_{E/F}(\theta(X^+(\widetilde{M}/\widetilde{N})))\subseteq\theta(X^+(M/N)). $$

In particular, in the case $M=N$ we obtain
\[
  N_{E/F}\big(\theta(\rO^+(\widetilde{M}))\big)\subseteq \theta(\rO^+(M)) .
\]

When (\ref{5.1.1}) holds for $M$ and $N$, we also have $$N_{E/F}(\theta(X(\widetilde{M}/\widetilde{N})))\subseteq\theta(X(M/N)) \quad\text{and in particular}\quad N_{E/F}\big(\theta(\rO(\widetilde{M}))\big)\subseteq \theta(\rO(M)).$$
\end{coro}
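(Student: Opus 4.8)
The plan is to derive Corollary \ref{coro5.2} directly from the local-global description established in Proposition \ref{glb-norm} together with the local norm principle of Theorem \ref{relative}. First I would take an arbitrary element $\beta\in\theta(X^+(\widetilde{M}/\widetilde{N}))$ and fix a representative $b\in E^\times$; the goal is to show $N_{E/F}(b)\in\theta(X^+(M/N))$. By Proposition \ref{glb-norm} applied over $S$ (note that $\mathbf{Spin}(E\otimes_RM)$ satisfies strong approximation over $S$ because $E\otimes_RM\cong E\otimes_FV$ is still isotropic of dimension $\geq 3$ once $V$ is, so Remark \ref{rmkSA} applies — I should state this observation explicitly), the hypothesis $\beta\in\theta(X^+(\widetilde{M}/\widetilde{N}))$ is equivalent to saying that $\beta\in\theta_{\mathfrak{P}}(X^+(\widetilde{M}_{\mathfrak{P}}/\widetilde{N}_{\mathfrak{P}}))$ for every maximal ideal $\mathfrak{P}$ of $S$.

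Next I would reduce everything to a fixed $\mathfrak{p}\in\Spm(R)$ and the finitely many primes $\mathfrak{P}$ of $S$ above it. The key point is that for such $\mathfrak{p}$, the completion $S\otimes_RR_{\mathfrak{p}}=\prod_{\mathfrak{P}\mid\mathfrak{p}}S_{\mathfrak{P}}$ is the integral closure of the complete discrete valuation ring $R_{\mathfrak{p}}$ in the étale $F_{\mathfrak{p}}$-algebra $E\otimes_FF_{\mathfrak{p}}=\prod_{\mathfrak{P}\mid\mathfrak{p}}E_{\mathfrak{P}}$, and the norm $N_{E/F}$ localizes as the product of the local norms $N_{E_{\mathfrak{P}}/F_{\mathfrak{p}}}$. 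So from $\beta\in\theta_{\mathfrak{P}}(X^+(\widetilde{M}_{\mathfrak{P}}/\widetilde{N}_{\mathfrak{P}}))$ for all $\mathfrak{P}\mid\mathfrak{p}$, applying Theorem \ref{relative} componentwise over each $R_{\mathfrak{p}}\hookrightarrow S_{\mathfrak{P}}$ and multiplying, I get
\[
N_{E/F}(b)\cdot(F_{\mathfrak{p}}^\times)^2=\prod_{\mathfrak{P}\mid\mathfrak{p}}N_{E_{\mathfrak{P}}/F_{\mathfrak{p}}}(b_{\mathfrak{P}})\cdot(F_{\mathfrak{p}}^\times)^2\in\theta_{\mathfrak{p}}(X^+(M_{\mathfrak{p}}/N_{\mathfrak{p}}))
\]
for every $\mathfrak{p}\in\Spm(R)$, where $b_{\mathfrak{P}}$ is the image of $b$ in $E_{\mathfrak{P}}^\times$. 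Since $N_{E/F}(b)$ clearly lies in $\theta(\mathrm{O}^+(V))$ by the field-case Knebusch/Scharlau norm principle (or simply because $N_{E/F}(b)=N_{E/F}$ of a spinor norm over $E$, and $\theta(X^+(\widetilde{M}/\widetilde{N}))\subseteq\theta(\mathrm{O}^+(E\otimes_FV))$ whose norm lands in $\theta(\mathrm{O}^+(V))$ by Theorem \ref{normlattice} applied with $M=N$, or by \cite{K71}), Proposition \ref{glb-norm} now gives $N_{E/F}(b)\in\theta(X^+(M/N))$, as desired. The assertion for $M=N$ is the special case $N=M$, noting $X^+(M/M)=\mathrm{O}^+(V)$ and $\theta(X^+(M/M))=\theta(\mathrm{O}^+(M))$. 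For the last statement, when (\ref{5.1.1}) holds one repeats the identical argument using the improper versions $X$, $\mathrm{O}$, $\theta_{\mathfrak{p}}(X(M_{\mathfrak{p}}/N_{\mathfrak{p}}))$ and the second inclusion in Theorem \ref{relative}.

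I expect the only genuine subtlety to be the localization bookkeeping: checking that the completion of $S$ at $\mathfrak{p}$ decomposes as $\prod_{\mathfrak{P}\mid\mathfrak{p}}S_{\mathfrak{P}}$, that each $S_{\mathfrak{P}}$ is the integral closure of $R_{\mathfrak{p}}$ in the finite field extension $E_{\mathfrak{P}}/F_{\mathfrak{p}}$, that $\widetilde{M}_{\mathfrak{P}}=S_{\mathfrak{P}}\otimes_{R_{\mathfrak{p}}}M_{\mathfrak{p}}$ and similarly for $N$, and that $N_{E/F}$ commutes with these localizations — all of which are standard facts about Dedekind domains and completions (cf. \cite[Chapter III]{Serre-GTM67}) but must be invoked correctly so that Theorem \ref{relative} can be applied verbatim over each $R_{\mathfrak{p}}$. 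There is no new idea required beyond assembling Proposition \ref{glb-norm} and Theorem \ref{relative}; the work is entirely in verifying that the hypotheses of those two results are met and that $\mathbf{Spin}(E\otimes_FV)$ inherits strong approximation over $S$ from the isotropy hypothesis on $V$.
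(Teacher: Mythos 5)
Your proof is correct and assembles the same four ingredients the paper's one-line proof cites: Proposition~\ref{glb-norm}, Proposition~\ref{proposition4} (for the group structure of $\theta_{\mathfrak{p}}(X^+(M_{\mathfrak{p}}/N_{\mathfrak{p}}))$, which you need when multiplying over $\mathfrak{P}\mid\mathfrak{p}$), Theorem~\ref{relative}, and Knebusch's norm principle. One small simplification worth noting: you do not need strong approximation over $S$ or the nontrivial direction of Proposition~\ref{glb-norm} applied over $S$ at all — only the inclusion $\theta(X^+(\widetilde{M}/\widetilde{N}))\subseteq\theta_{\mathfrak{P}}(X^+(\widetilde{M}_{\mathfrak{P}}/\widetilde{N}_{\mathfrak{P}}))$, which holds trivially by localizing a given $\sigma\in X^+(\widetilde{M}/\widetilde{N})$ — so the observation about $V_E$ being isotropic of dimension $\geq 3$ can be dropped (and indeed the hypothesis in Proposition~\ref{glb-norm} is only that $\mathbf{Spin}(V)$ satisfies strong approximation over $R$, not the stronger isotropy assumption you appeal to).
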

\begin{proof}
    This follows from Proposition \ref{glb-norm}, Proposition \ref{proposition4}, Theorem \ref{relative} and  Knebusch's norm principle \cite[Satz]{K71}.
\end{proof}

The following example explains that the norm principle for integral spinor norms over Dedekind domains does not hold in general.

\begin{example}\label{NoNP}
Let $R$ be the ring of integers in the number field $F= \mathbb Q(\sqrt{3})$. Let $V$ be a quadratic space of dimension $\dim(V)\ge 3$ over $F$ with $\det(V)=1$ such that the real quadratic space $F_v\otimes_FV$ is positive definite for each real place $v$ of $F$. By the construction in \cite[Theorem 3.1]{O'M75}, there is a quadratic lattice $L$ over $R$ with $L\otimes_R F=V$ such that $\mathrm{O}(L)= \{ \pm 1 \}$, that $L_{\mathfrak p}$ is unimodular at every dyadic prime $\mathfrak p$, and that the number of Jordan components of $L_{\mathfrak p}$ is at most 2 over every non-dyadic prime $\mathfrak p$. By \cite[55:7]{O'M00}, one has $\theta (\mathrm{O}^+(L))=(F^\times)^2$.

Let $E=F(\sqrt{-1})$. By strong approximation for spin groups over $E$ (cf. \cite[104:4]{O'M00}), one can use \cite[101:8]{O'M00} and argue similarly as in the proof of Proposition \ref{glb-norm} to get
$$ \theta \big(\mathrm{O}^+(\widetilde{L})\big)= \{ \beta \in E^\times : \ \beta\in \theta_{\mathfrak{q}} \big(\mathrm{O}^+ (\widetilde{L}\otimes_SS_{\mathfrak{q}})\big)  \text{ for all }  \mathfrak{q}\in \Spm(S) \}, $$
where for every $\mathfrak{q}\in\Spm(S)$, $S_{\mathfrak{q}}$ denotes the completion of $S$ at $\mathfrak{q}$. By \cite[92:5]{O'M00} and \cite[Proposition A]{Hsia75}, one obtains
$\theta (\mathrm{O}^+ (L\otimes_R S_{\mathfrak{q}})) \supseteq S_{\mathfrak q}^\times (E_{\mathfrak{q}}^\times)^2 $ for all $\mathfrak{q}\in \Spm(S)$. Since every $\mathfrak p \in \Spm(R)$ is unramified in $E/F$, one has $N_{E_{\mathfrak q}/F_{\mathfrak p}} (S_{\mathfrak q}^\times) = R_{\mathfrak p}^\times $ for all $\mathfrak p\in \Spm(R)$ with $\mathfrak q$ above $\mathfrak p$. This together with the Hasse norm theorem (cf. \cite[65:23]{O'M00}) implies that
$$ N_{E/F}\big(\theta (\mathrm{O}^+(\widetilde{L}))\big) \supseteq  \{ \alpha\in R^{\times}: \alpha >0  \ \text{in $F_v$ for all real places $v$ of $F$} \} .  $$ Therefore the fundamental unit $2-\sqrt{3}$ of $R$ belongs to  $ N_{E/F}\big(\theta (\rO^+(\widetilde{L}))\big)$. Therefore,
 \[
 N_{E/F}\big(\theta (\mathrm{O}^+(\widetilde{L}))\big)\nsubseteq \theta (\mathrm{O}^+(L))=(F^\times)^2
 \]in this example. \qed
\end{example}

In the rest of this subsection, we prove an adelic norm principle.

For each $\mathfrak{q}\in\Spm(S)$, let $S_{\mathfrak{q}}$ denote the completion of $S$ at $\mathfrak{q}$ and let $E_{\mathfrak{q}}$ denote the fraction field of $S_{\mathfrak{q}}$. For each $\fp\in\Spm(R)$ we have the local norm map
\[
N_{E_{(\fp)}/F_{\fp}}: E_{(\fp)}:=\prod_{\fq\,|\,\fp}E_{\fq}\longrightarrow F_{\fp}\,.
\]So there is an induced norm map between id\`ele groups
$N_{E/F}:\bfI_E\to \bfI_F$.
By abuse of notation,  the induced norm map from $\bfI_E/\bfI_E^2$ to $\bfI_F/\bfI_F^2$ will also be denoted by $N_{E/F}$.

\begin{prop}\label{NP-adelic}
With notation and hypotheses as above, assume further that $2\in R^\times$. Then
\[
N_{E/F}\big(\theta_{\bfA}(X_{\bfA}(\widetilde{M}/\widetilde{N}))\big)\subseteq \theta_{\bfA}(X_{\bfA}(M/N))\quad\text{and}\quad N_{E/F}\big(\theta_{\bfA}(X^+_{\bfA}(\widetilde{M}/\widetilde{N}))\big)\subseteq \theta_{\bfA}(X^+_{\bfA}(M/N)).
\]
\end{prop}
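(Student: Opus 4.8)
The plan is to reduce this adelic norm principle to its local counterpart, Theorem~\ref{relative}, applied place by place over $F$, combined with the group structure of the local transporter spinor norms (Proposition~\ref{proposition4}). Recall that the norm map $N_{E/F}\colon\bfI_E\to\bfI_F$ is defined componentwise over $\Spm(R)$: its $\fp$-component is $N_{E_{(\fp)}/F_{\fp}}$, which sends an idèle $(\beta_{\fq})_{\fq\mid\fp}$ to $\prod_{\fq\mid\fp}N_{E_{\fq}/F_{\fp}}(\beta_{\fq})$. So, writing a given element of $\theta_{\bfA}(X^+_{\bfA}(\widetilde M/\widetilde N))$ as $\theta_{\bfA}(\underline{\sigma})\bfI_E^2$ with $\underline{\sigma}=(\sigma_{\fq})\in X^+_{\bfA}(\widetilde M/\widetilde N)$ and choosing an idèle representative $(\beta_{\fq})_{\fq\in\Spm(S)}\in\bfI_E$, I must show that the idèle $(\gamma_{\fp})_{\fp}$ defined by $\gamma_{\fp}:=\prod_{\fq\mid\fp}N_{E_{\fq}/F_{\fp}}(\beta_{\fq})$ represents an element of $\theta_{\bfA}(X^+_{\bfA}(M/N))$, and likewise with $X$ in place of $X^+$.

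Since $(\beta_{\fq})$ represents $\theta_{\bfA}(\underline{\sigma})\bfI_E^2$, one has $\beta_{\fq}\equiv\theta_{\fq}(\sigma_{\fq})\pmod{(E_{\fq}^{\times})^2}$, hence $\beta_{\fq}\in\theta_{\fq}(X^+(\widetilde M_{\fq}/\widetilde N_{\fq}))$ for every $\fq\in\Spm(S)$, while $\beta_{\fq}\in S_{\fq}^{\times}$ for all but finitely many $\fq$. Now fix $\fp\in\Spm(R)$. Because $2\in R^{\times}$, the completion $R_{\fp}$ is a complete discrete valuation ring in which $2$ is invertible, and for each $\fq\mid\fp$ the ring $S_{\fq}$ is the integral closure of $R_{\fp}$ in the finite extension $E_{\fq}/F_{\fp}$, with $\widetilde M_{\fq}=S_{\fq}\otimes_{R_{\fp}}M_{\fp}$ and $\widetilde N_{\fq}=S_{\fq}\otimes_{R_{\fp}}N_{\fp}$. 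Applying Theorem~\ref{relative} over $R_{\fp}$ therefore yields $N_{E_{\fq}/F_{\fp}}(\beta_{\fq})\in\theta_{\fp}(X^+(M_{\fp}/N_{\fp}))$ for each $\fq\mid\fp$; and since $\theta_{\fp}(X^+(M_{\fp}/N_{\fp}))$ is a subgroup of $F_{\fp}^{\times}$ by Proposition~\ref{proposition4}, the product $\gamma_{\fp}$ lies in $\theta_{\fp}(X^+(M_{\fp}/N_{\fp}))$ for every $\fp$.

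It then remains to realize $(\gamma_{\fp})$ by a global proper adelic transporter. I would let $T\subseteq\Spm(R)$ be the finite set of primes $\fp$ for which $M_{\fp}$ fails to be unimodular or $\beta_{\fq}\notin S_{\fq}^{\times}$ for some $\fq\mid\fp$. For $\fp\notin T$ one has $M_{\fp}$ unimodular and $\gamma_{\fp}\in R_{\fp}^{\times}$, and I claim $\gamma_{\fp}\in\theta_{\fp}(\rO^+(M_{\fp}))$: if $V_{\fp}=F_{\fp}\otimes_{F}V$ is isotropic, this holds because $\theta_{\fp}(\rO^+(M_{\fp}))=R_{\fp}^{\times}(F_{\fp}^{\times})^2$ by Proposition~\ref{modular-spinor}; if $V_{\fp}$ is anisotropic, it holds because then $\rO^+(M_{\fp})=\rO^+(V_{\fp})=X^+(M_{\fp}/N_{\fp})$ by Corollary~\ref{modular}, so that $\gamma_{\fp}\in\theta_{\fp}(X^+(M_{\fp}/N_{\fp}))=\theta_{\fp}(\rO^+(M_{\fp}))$. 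Thus for $\fp\notin T$ I may choose $\sigma_{\fp}'\in\rO^+(M_{\fp})$ with $\theta_{\fp}(\sigma_{\fp}')=\gamma_{\fp}(F_{\fp}^{\times})^2$, and for $\fp\in T$ I choose $\sigma_{\fp}'\in X^+(M_{\fp}/N_{\fp})$ with the same spinor norm (possible, since $\gamma_{\fp}\in\theta_{\fp}(X^+(M_{\fp}/N_{\fp}))$ for all $\fp$). Then $\underline{\sigma}'=(\sigma_{\fp}')$ lies in $X^+_{\bfA}(M/N)$, because it lies in $\rO^+(M_{\fp})$ for almost all $\fp$ and satisfies $N_{\fp}\subseteq\sigma_{\fp}'M_{\fp}$ for all $\fp$, and $\theta_{\bfA}(\underline{\sigma}')$ represents $N_{E/F}((\beta_{\fq}))$. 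The inclusion without $+$ follows by the identical argument, using that $\theta_{\fp}(\rO(M_{\fp}))=R_{\fp}^{\times}(F_{\fp}^{\times})^2$ for $M_{\fp}$ unimodular with $V_{\fp}$ isotropic, and $\rO(M_{\fp})=\rO(V_{\fp})=X(M_{\fp}/N_{\fp})$ for $V_{\fp}$ anisotropic.

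The conceptual content is entirely contained in the local Theorem~\ref{relative}; the rest is adelic bookkeeping, and I do not expect a serious obstacle. The one point that needs a little care — and that reflects the extra generality of working over an arbitrary Dedekind domain — is that $V_{\fp}$ may be anisotropic at infinitely many $\fp$, so one cannot simply assert that $\theta_{\fp}(\rO^+(M_{\fp}))=R_{\fp}^{\times}(F_{\fp}^{\times})^2$ for almost all $\fp$; at anisotropic primes, however, the transporter set coincides with the orthogonal group of the lattice by Corollary~\ref{modular}, which is precisely what makes the final assembly of $\underline{\sigma}'$ go through.
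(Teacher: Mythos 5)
Your proof is correct and follows essentially the same route as the paper: apply Theorem~\ref{relative} at each $\fq\mid\fp$, combine via the group structure from Proposition~\ref{proposition4}, and then assemble an adelic transporter by distinguishing the isotropic (Proposition~\ref{modular-spinor}) and anisotropic (Corollary~\ref{modular}) unimodular places exactly as in the proof of Theorem~\ref{relspinor}, which the paper simply cites at that step rather than spelling out as you do.
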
		
\begin{proof}
We only give a proof of the first inclusion, since the other can be proved similarly. Consider an arbitrary $\underline{\sigma}=(\sigma_{\fq})\in X_{\bfA}(\widetilde{M}/\widetilde{N})$. By definition, we have
\[
N_{E/F}\big(\theta_{\bfA}(\underline{\sigma})\big)=
\bigg(\prod_{\fq\,|\,\fp}N_{E_{\fq}/F_{\fp}}(\theta_{\fq}(\sigma_{\fq}))\bigg)_{\fp}\,.
\]For each $\fp\in\Spm(R)$ and each $\fq\in\Spm(S)$ lying over $\fp$, we have $N_{E_{\fq}/F_{\fp}}(\theta_{\fq}(\sigma_{\fq}))
\in\theta_{\fp}(X(M_{\fp}/N_{\fp}))$ by Theorem \ref{relative}. Since $\theta_{\fp}(X(M_{\fp}/N_{\fp}))$ is a group (Proposition \ref{proposition4}), it follows that
\[
\prod_{\fq\,|\,\fp}N_{E_{\fq}/F_{\fp}}(\theta_{\fq}(\sigma_{\fq})\in\theta_{\fp}(X(M_{\fp}/N_{\fp}))\quad\text{ for every } \fp\in\Spm(R).
\] Therefore, there is a family $\underline{\gamma}=(\gamma_{\fp})\in\prod_{\fp}X(M_{\fp}/N_{\fp})$ such that $N_{E/F}\big(\theta_{\bfA}(\underline{\sigma})\big)=(\theta_{\fp}(\gamma_{\fp}))_{\fp}$.
Finally, since $N_{E/F}(\theta_{\mathbf{A}}(\underline{\sigma}))\in\mathbf{I}_F$, we have $\theta_{\mathfrak{p}}(\gamma_{\mathfrak{p}})\in R_{\mathfrak{p}}^\times(F_{\fp}^\times)^2$ for almost all $\mathfrak{p}$. The same argument as in the proof of Theorem \ref{relspinor} shows that in fact we can choose $\gamma_{\mathfrak{p}}$ to be in $\mathrm{O}(M_{\mathfrak{p}})$ for almost all $\mathfrak{p}$. Then we see that $\underline{\gamma}=(\gamma_{\mathfrak{p}})$ is an element of $X_{\bfA}(M/N)$ with $\theta_{\mathbf{A}}(\underline{\gamma})=N_{E/F}(\theta_{\mathbf{A}}(\underline{\sigma}))$. Since $\underline{\sigma}$ was arbitrary, we are through.
\end{proof}

\begin{remark}\label{norm-compatible}
  If $E/F$ is separable, then $E_{(\fp)}=\prod_{\fq\,|\,\fp}E_{\fq}$ is isomorphic to $E_{\fp}:=F_{\fp}\otimes_FE$ for every $\fp\in\Spm(R)$, and hence the adelic norm map $N_{E/F}:\bfI_E\to \bfI_F$ is compatible with the usual norm map $N_{E/F}: E^\times\to F^\times$.

  If $E/F$ is a purely inseparable extension of prime degree $l\neq 2$, then $E=F(\sqrt[l]{b})$ for some $b\in F^\times\setminus (F^\times)^l$, and hence  $F_{\mathfrak{p}}\otimes_FE\cong F_{\mathfrak{p}}[x]/(x^{l}-b).$
   Since the polynomial $x^{l}-b$ either stays irreducible or factors as $(x-c)^{l}$ for some $c\in F_{\fp}$ over $F_{\mathfrak{p}}$, the Jacobson radical of $F_{\mathfrak{p}}\otimes_FE$ is either trivial or generated by the canonical image of $x-c$. It then follows from \cite[Chapter VI, \S\;8.2, Proposition 2 (b)]{Bourbaki} that there is a unique  $\mathfrak{q}\in\Spm(S)$ lying above $\mathfrak{p}$ and that
   \[
 [E_{(\fp)}:F_{\fp}]=[E_{\mathfrak{q}}:F_{\mathfrak{p}}]=
\begin{cases}
l \quad &\text{ if }  x^{l}-b \text{ is irreducible over }F_{\mathfrak{p}}\,,\\
1 \quad &\text{ otherwise}\,.
\end{cases}\]Note that the norm map $N_{K/k}: K\to k$ for any purely inseparable degree $l$ extension $K/k$ is given by
$N_{K/k}(\alpha)=\alpha^l$ for all $\alpha\in K$. So the two norm maps
\[
N_{E/F}:E^\times\to F^\times\quad\text{ and }\quad N_{E_{(\fp)/F_{\fp}}}: E^\times_{(\fp)}\to F_{\fp}^\times
\]are clearly compatible if $[E_{(\fp)}:F_{\fp}]=l$. Since $l$ is odd, these two norms are compatible modulo squares  if $[E_{(\fp)}:F_{\fp}]=1$.

The above discussions together show that the diagram
\[
\begin{CD}
  E^\times @>>> \bfI_E/\bfI_E^2\\
  @V{N_{E/F}}VV @VV{N_{E/F}}V\\
  F^\times @>>> \bfI_F/\bfI_F^2
\end{CD}
\]
is commutative in the general case (where $E/F$ need not be separable or purely inseparable). Therefore, the norm principles in Corollary \ref{coro5.2} and Proposition \ref{NP-adelic} are compatible (when they hold).  \qed
\end{remark}

\subsection{Proof of the main theorem}\label{sec5.2}

The goal of this subsection is to give a proof of Theorem \ref{intro} based on results obtained previously.

Let $E/F$ be a finite extension and let $S$ be the integral closure of $R$ in $E$.

\begin{lemma}\label{odd-deg}
Suppose that $[E:F]$ is odd. Then, for each $\fp\in\Spm(R)$, there exists a $\mathfrak{q}\in\Spm(S)$ lying over $\fp$ such that $[E_{\fq}:F_{\fp}]$ is odd.
\end{lemma}
\begin{proof}
  When $E/F$ is separable, this follows from the fundamental equality $ [E:F]=\sum_{\fq|\mathfrak{p}}[E_{\fq}:F_{\mathfrak{p}}].$  The case with $E/F$ purely inseparable of prime degree has been discussed in Remark \ref{norm-compatible}. The general case follows easily from a combination of the above two special cases.
\end{proof}

Let $N$ be a quadratic lattice in $V=F\otimes_{R}M$. For any $R$-lattice $L$ on $V$, (as in \S\;\ref{sec4.2}) we say that $N$ embeds into $\spn(L)$ (resp. $\spn^+(L)$) and we write $N\hookrightarrow \spn(L)$ (resp. $N\hookrightarrow\spn^+(L)$) if $N$ is contained in some lattice in $\spn(L)$ (resp. $\spn^+(L)$).

We have the following Springer theorem for the embedability into a spinor genus.

\begin{thm}\label{spin-Springer}
 Suppose that $[E:F]$ is odd and $2\in R^\times$. Let $N$ be a quadratic lattice in $V$.

If $\widetilde{N}\hookrightarrow\spn^+(\widetilde{M})$ over $S$, then $N\hookrightarrow\spn^+(M)$ over $R$.

Similarly, if $\widetilde{N}\hookrightarrow\spn(\widetilde{M})$ over $S$, then $N\hookrightarrow\spn(M)$ over $R$.
\end{thm}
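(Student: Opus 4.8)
The plan is to bootstrap from two earlier results. The local integral Springer theorem of \S\ref{loc} will let us replace the abstract lattice $N$ by a genuine sublattice of a lattice in the genus of $M$, after which the adelic norm principle of \S\ref{norm} controls how the relevant id\`ele subgroups behave under the base change $E/F$. Throughout write $\widetilde V=E\otimes_F V$.

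First I would perform a local reduction. From $\widetilde N\hookrightarrow\spn^+(\widetilde M)$ one gets a lattice $L$ on $\widetilde V$ with $\widetilde N\subseteq L$ and $L\in\gen(\widetilde M)$; since two lattices in a common genus are locally isometric, $\widetilde N_{\fq}$ embeds into $\widetilde M_{\fq}$ over $S_{\fq}$ for every $\fq\in\Spm(S)$. By Lemma \ref{odd-deg} choose, for each $\fp\in\Spm(R)$, a place $\fq\mid\fp$ with $[E_{\fq}:F_{\fp}]$ odd; then Theorem \ref{local-springer} gives that $N_{\fp}$ embeds into $M_{\fp}$ over $R_{\fp}$, i.e.\ $X(M_{\fp}/N_{\fp})\neq\emptyset$, and post-composing with a reflection in $\rO(M_{\fp})$ (Remark \ref{orb+}) also $X^+(M_{\fp}/N_{\fp})\neq\emptyset$. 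Since $N_{\fp}=M_{\fp}$ for almost all $\fp$, these patch to some $\underline{\sigma}_0\in X^+_{\bfA}(M/N)$. Putting $M_0:=\underline{\sigma}_0 M$, we obtain an honest sublattice $N\subseteq M_0$ with $M_0\in\gen(M)$, while $M=\underline{\sigma}_0^{-1}M_0$ and $\widetilde M=\widetilde{\underline{\sigma}_0}^{-1}\widetilde{M_0}$, where $\widetilde{\underline{\sigma}_0}\in\rO^+_{\bfA}(\widetilde V)$ is the base change of $\underline{\sigma}_0$ and $\theta_{\bfA}(\widetilde{\underline{\sigma}_0})$ is the image of $\theta_{\bfA}(\underline{\sigma}_0)$ under the extension map $\bfI_F/\bfI_F^2\to\bfI_E/\bfI_E^2$.

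Next I would reformulate both embedding statements as membership conditions on spinor norms and transport one to the other. Since $N\subseteq M_0$ and $2\in R^\times$, Theorem \ref{relspinor} gives, over $R$ and over $S$ respectively, that $\rO'_{\bfA}(V)X^+_{\bfA}(M_0/N)$ is a subgroup, namely $\{\underline\mu\in\rO^+_{\bfA}(V):\theta_{\bfA}(\underline\mu)\in\theta_{\bfA}(X^+_{\bfA}(M_0/N))\}$, with $\theta_{\bfA}(X^+_{\bfA}(M_0/N))$ a subgroup of $\bfI_F/\bfI_F^2$, and similarly for $\widetilde{M_0},\widetilde N$. Applying Lemma \ref{spinor-N} with base lattice $M_0$, and using that $\theta$ is a homomorphism into $2$-torsion groups, one finds that $N\hookrightarrow\spn^+(M)$ is equivalent to
\[
\theta_{\bfA}(\underline{\sigma}_0)\ \in\ \theta\big(\rO^+(V)\big)\cdot\theta_{\bfA}\big(X^+_{\bfA}(M_0/N)\big)\quad\text{in }\bfI_F/\bfI_F^2,
\]
while the hypothesis $\widetilde N\hookrightarrow\spn^+(\widetilde M)$ is equivalent to $\theta_{\bfA}(\widetilde{\underline{\sigma}_0})\in\theta(\rO^+(\widetilde V))\cdot\theta_{\bfA}(X^+_{\bfA}(\widetilde{M_0}/\widetilde N))$ in $\bfI_E/\bfI_E^2$. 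Applying $N_{E/F}\colon\bfI_E/\bfI_E^2\to\bfI_F/\bfI_F^2$ to the latter relation: the left-hand side becomes $\theta_{\bfA}(\underline{\sigma}_0)$, because $\theta_{\bfA}(\widetilde{\underline{\sigma}_0})$ is the extension of $\theta_{\bfA}(\underline{\sigma}_0)$ and $N_{E/F}$ composed with extension is the identity on $\bfI_F/\bfI_F^2$ (as $[E:F]$, hence each relevant local degree by Remark \ref{norm-compatible}, is odd); on the right-hand side $N_{E/F}(\theta(\rO^+(\widetilde V)))\subseteq\theta(\rO^+(V))$ by Knebusch's norm principle over fields \cite[Satz]{K71}, and $N_{E/F}(\theta_{\bfA}(X^+_{\bfA}(\widetilde{M_0}/\widetilde N)))\subseteq\theta_{\bfA}(X^+_{\bfA}(M_0/N))$ by the adelic norm principle (Proposition \ref{NP-adelic}), the two being compatible by Remark \ref{norm-compatible}. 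Hence $\theta_{\bfA}(\underline{\sigma}_0)\in\theta(\rO^+(V))\cdot\theta_{\bfA}(X^+_{\bfA}(M_0/N))$, i.e.\ $N\hookrightarrow\spn^+(M)$. For ordinary spinor genera one fixes an anisotropic $u\in V$ and the reflection $\tau_0=\tau_u$: then $\spn(K)=\spn^+(K)\cup\spn^+(\tau_0K)$ for every lattice $K$ on $V$, compatibly over $S$ since $\widetilde{\tau_0K}=\tau_0\widetilde K$, so $\widetilde N\hookrightarrow\spn(\widetilde M)$ forces $\widetilde N\hookrightarrow\spn^+(\widetilde M)$ or $\widetilde N\hookrightarrow\spn^+(\widetilde{\tau_0M})$, and the proper case applied to $(M,N)$ or to $(\tau_0M,N)$ yields $N\hookrightarrow\spn^+(M)\subseteq\spn(M)$ or $N\hookrightarrow\spn^+(\tau_0M)\subseteq\spn(M)$.

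The hardest part lies entirely in the inputs: the adelic norm principle (Proposition \ref{NP-adelic}) depends on the local norm principle for integral spinor norms (Theorem \ref{relative}), whose proof --- the long case analysis over ramified and purely inseparable residue extensions in \S\ref{sec3.3} --- is the technical core of the paper. Inside the argument above, the one subtle point is the first step: Theorem \ref{relspinor} is stated for sublattices, so one genuinely needs to promote the abstract $N$ to a sublattice of some $M_0\in\gen(M)$, and this is precisely where the availability of the local integral Springer theorem at an odd-degree place over each $\fp$ is indispensable.
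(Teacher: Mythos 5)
Your proof is correct and follows essentially the same approach as the paper's. The paper first produces a lattice $P\in\gen(M)$ with $N\subseteq P$ via O'Meara's local-global criterion (using Lemma \ref{odd-deg} and Theorem \ref{local-springer} exactly as you do), then encodes the set of proper spinor genera in which $N$ embeds as a subgroup (Theorem \ref{relspinor}) and shows the relevant class vanishes by proving the injectivity of a map $\psi$ between id\`ele quotients via a retraction furnished by the adelic norm principle and Knebusch's norm principle — which is precisely your computation with $\theta_{\bfA}(\underline{\sigma}_0)$, just phrased without drawing the commutative diagram; the reduction of $\spn$ to $\spn^+$ via a determinant $-1$ element is likewise identical in substance.
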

\begin{proof}
  First assume that $\widetilde{N}\hookrightarrow\spn^+(\widetilde{M})$. Then by definition, there is an $S$-lattice $L\in\spn^+(\widetilde{M})\subseteq \gen(\widetilde{M})$ such that $\widetilde{N}$ is contained in $L$. Thus, for every $\fq\in\Spm(S)$, $S_{\fq}\otimes_S\widetilde{N}=S_{\fq}\otimes_RN$ is contained in $S_{\fq}\otimes_SL\cong S_{\fq}\otimes_S\widetilde{M}=S_{\fq}\otimes_RM$. For every $\fp\in \Spm(R)$, we can deduce from Lemma \ref{odd-deg} and Theorem \ref{local-springer} that $N_{\fp}=R_{\fp}\otimes_RN$ can be embedded in $M_{\fp}=R_{\fp}\otimes_RM$ over $R_{\fp}$. So, by \cite[102:5]{O'M00}, $N$ is embedded in some lattice $P\in \gen(M)$. Without loss of generality, we may assume $N\subseteq P$.

  As we have mentioned in \S\;\ref{sec4.2} (as a consequence of Theorem \ref{relspinor}), if the set of all proper spinor genera in $\gen(M)=\gen(P)$ is identified with the quotient group
  $\rO_{\bfA}^+(V)\big/\big(\rO^+(V)\rO'_{\bfA}(V)\rO_{\bfA}^+(M)\big)$ as in \eqref{spin-genus}, then the subset consisting of proper spinor genera in which $N$ embeds can be characterized as the normal subgroup
  \[
  \rO^+(V)\rO'_{\bfA}(V)X_{\bfA}^+(P/N)\big/\big(\rO^+(V)\rO'_{\bfA}(V)\rO_{\bfA}^+(M)\big) \lhd \rO_{\bfA}^+(V)\big/\big(\rO^+(V)\rO'_{\bfA}(V)\rO_{\bfA}^+(M)\big).
  \]Let $[\spn^+(M)]$ be the canonical image of $\spn^+(M)$ in the quotient
  \[
  \{\spn^+(L)\,|\,L\in\gen(P)\}\big/\{\spn^+(L)\,|\,L\in\gen(P),\,N\hookrightarrow \spn^+(L)\}\cong \rO_{\bfA}^+(V)\big/\rO^+(V)\rO'_{\bfA}(V)X_{\bfA}^+(P/N).
  \]We need to show that $[\spn^+(M)]=0$ in the above quotient group.

  Put $V_E=E\otimes_FV$ and consider the following commutative diagram
  $$
		\xymatrix{
			& \mathrm{O}_{\mathbf{A}}^+(V)\big/\big(\rO^+(V)\rO'_{\bfA}(V)X_{\mathbf A}^+(P/N)\big)  \ar[r]^{\theta_F} \ar[d]^\phi & \mathbf{I}_F/\theta(\mathrm{O}^+(V))\theta_{\mathbf{A}}(X_{\mathbf A}^+(P/N)) \ar[d]^\psi\\
			& \mathrm{O}_{\mathbf{A}}^+(V_E)\big/\big(\mathrm{O}^+(V_E)\rO'_{\bfA}(V_E)X_{\mathbf A}^+(\widetilde{P}/\widetilde{N})\big)\ar[r]^{} & \mathbf{I}_E/\theta(\mathrm{O}^+(V_E))\theta_{\mathbf{A}}(X_{\mathbf A}^+(\widetilde{P}/\widetilde{N}))
		}
		$$ where the horizontal maps are induced by the adelic spinor norm maps and the vertical maps are induced by the natural inclusions
$\rO^+(V)\hookrightarrow\rO^+(V_E)$ and $\bfI_F\hookrightarrow\bfI_E$. It is clear from the definition that the map $\theta_F$ in the diagram is injective.
The assumption $\widetilde{N}\hookrightarrow\spn^+(\widetilde{M})$ means that $[\spn^+(M)]$ lies in the kernel of $\phi$. So it remains to prove the injectivity of the map $\psi$ in the above diagram.

By Knebusch's norm principle, the adelic norm principle (Proposition \ref{NP-adelic}) and their compatibility (as discussed in Remark \ref{norm-compatible}), there is a well-defined norm map
\[
N_{E/F}:  \  \mathbf{I}_E/\theta(\mathrm{O}^+(V_E))\theta_{\mathbf{A}}(X_{\mathbf A}^+(\widetilde{P}/\widetilde{N}))\longrightarrow\mathbf{I}_F/\theta(\mathrm{O}^+(V))\theta_{\mathbf{A}}(X_{\mathbf A}^+(P/N)).
\]
Since $[E:F]$ is odd and the groups under consideration are abelian 2-torsion groups, one concludes that $N_{E/F}\circ\psi=\mathrm{Id}$. This implies that $\psi$ is injective. The first assertion is thus proved.

To prove the second assertion, assume that $\widetilde{N}\hookrightarrow\spn(\widetilde{M})$. In other words, $\widetilde{N}\subseteq\sigma\underline{\rho}(\widetilde{M})$ for some $\sigma\in\rO(V_E)$ and $\underline{\rho}\in \rO_{\bfA}'(V_E)$. We may assume $\det(\sigma)=-1$. In this case choose any $\gamma\in\rO(V)$ with $\det(\gamma)=-1$, then $$\widetilde{\gamma N}=\gamma\widetilde{N}\subseteq\gamma\sigma\underline{\rho}(\widetilde{M}) \quad\text{with}\quad \gamma\sigma\underline{\rho}(\widetilde{M})\in \spn^+(\widetilde{M}).$$ Therefore $\gamma N\subseteq M'$ for some lattice $M'\in\spn^+(M)$ by the previous part, and so $N$ is contained in the lattice $\gamma^{-1}M'\in\spn(M)$.
\end{proof}

%

\begin{coro}\label{springer-coro}
With notation and hypotheses as in Theorem $\ref{spin-Springer}$, assume further that $\mathbf{Spin}(V)$ satisfies strong approximation over $R$.

If $\widetilde{N}$ is contained in some lattice in $\cls(\widetilde{M})$ (resp. in $\cls^+(\widetilde{M})$) over $S$, then $N$ is contained in some lattice in $\cls(M)$ (resp. in $\cls^+(M)$) over $R$.
\end{coro}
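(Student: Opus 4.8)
The plan is to deduce this statement directly from Theorem \ref{spin-Springer} together with Lemma \ref{SAcoro}, the point being that under the strong approximation hypothesis the spinor genus and the class of a lattice coincide. No new argument is really needed; the role of the present corollary is only to upgrade the conclusion of Theorem \ref{spin-Springer} from ``embeds into a spinor genus'' to ``embeds into a class''.

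Concretely, I would proceed in three steps. First, since one always has $\cls(\widetilde{M})\subseteq\spn(\widetilde{M})$ and $\cls^+(\widetilde{M})\subseteq\spn^+(\widetilde{M})$ (cf. the inclusions recorded in \S\ref{sec4.1}), the hypothesis that $\widetilde{N}$ is contained in some lattice of $\cls(\widetilde{M})$ (resp. of $\cls^+(\widetilde{M})$) over $S$ immediately yields $\widetilde{N}\hookrightarrow\spn(\widetilde{M})$ (resp. $\widetilde{N}\hookrightarrow\spn^+(\widetilde{M})$) over $S$; note this step needs no approximation property on the $S$-side. Second, since $[E:F]$ is odd and $2\in R^\times$, Theorem \ref{spin-Springer} applies and gives $N\hookrightarrow\spn(M)$ (resp. $N\hookrightarrow\spn^+(M)$) over $R$; that is, $N$ is contained in some $R$-lattice $P$ with $P\in\spn(M)$ (resp. $P\in\spn^+(M)$). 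Third, since $F\otimes_RM=V$ and $\mathbf{Spin}(V)$ satisfies strong approximation over $R$, Lemma \ref{SAcoro} gives $\spn(M)=\cls(M)$ and $\spn^+(M)=\cls^+(M)$; hence the lattice $P$ above already lies in $\cls(M)$ (resp. $\cls^+(M)$), which is exactly the assertion.

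I do not expect any genuine obstacle here: the only thing to check is that the hypotheses of the two inputs are in force, and they are — Theorem \ref{spin-Springer} requires only $[E:F]$ odd and $2\in R^\times$, while Lemma \ref{SAcoro} requires strong approximation for $\mathbf{Spin}(V)$ over $R$, both of which are assumed. All the substantive work (the local integral Springer theorem, the reduction formulas for transporters, and the local and adelic norm principles of \S\S\ref{sec3.3} and \ref{norm}) has already been carried out in the proof of Theorem \ref{spin-Springer}, so the proof of this corollary will be a short formal deduction.
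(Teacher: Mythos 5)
Your proposal is correct and is exactly the argument the paper gives (the paper simply writes ``Combine Theorem~\ref{spin-Springer} with Lemma~\ref{SAcoro}''). The three steps you spell out — pass from $\cls(\widetilde M)$ to $\spn(\widetilde M)$ using $\cls\subseteq\spn$, apply Theorem~\ref{spin-Springer} to descend to $N\hookrightarrow\spn(M)$, then use Lemma~\ref{SAcoro} and the strong approximation hypothesis to identify $\spn(M)$ with $\cls(M)$ — are precisely the intended deduction, and the hypotheses you check are the right ones.
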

\begin{proof}
  Combine Theorem \ref{spin-Springer} with Lemma \ref{SAcoro}.
\end{proof}

Now a proof of our main theorem is immediate.

\begin{proof}[Proof of Theorem  \ref{intro}]
Notice that $N$ is not a priori a lattice in $V=F\otimes_{R}M$. But since $S\otimes_{R}N$ can be embedded into $S\otimes_{R}M$ over $S$, we also have an embedding of quadratic spaces $E\otimes_{R}N\rightarrow E\otimes_{R}M$. So the classical Springer theorem for quadratic spaces ensures that the space $F\otimes_RN$ can be embedded in $V=F\otimes_RM$, and thus we may view $N$ as a lattice in $V$ without loss of generality.

At this point the theorem follows from a combination of Corollary \ref{springer-coro} with Remark \ref{rmkSA}, keeping in mind that for lattices $N$ and $M$ in a common quadratic space, $N$ can be embedded into $M$ if and only if $N$ is contained in some lattice in $\cls(M)$.
\end{proof}

\subsection{Examples}\label{sec5.3}

\begin{example} \label{c-exam}
The result of Theorem \ref{intro} can fail when $V=F\otimes_RM$ is 2-dimensional. To see this, consider the quadratic extension of $\mathbb {F}_3(t)$ given by $F= \mathbb{F}_3(t)(\sqrt{t^3-t^2+1})$  and let $R$ be the integral closure of $\mathbb F_3[t]$ inside $F$. Since the class number of $R$ is 5 by the table given in \cite[p.232]{Artin24}, there is a non-principal ideal $\mathfrak a$ of $R$ such that $\mathfrak{a}^5=rR$ for some $r\in R$ with $\sqrt[5]{r}\not\in F$. The quadratic lattice $M$ over $R$ defined by
$$ M= \mathfrak{a}x+\mathfrak{a}^{-1}y \ \ \text{with}\ \ Q(x)=Q(y)=0 \ \ \text{and} \ \ \langle x,y\rangle =\frac{1}{2} $$
 satisfies $1 \not \in Q(M)$. Indeed, if $1\in Q(M)$, there would exist $a\in\mathfrak{a}$ and $b\in\mathfrak{a}^{-1}$ such that $1=ab$, implying that $$\mathfrak{a}=ab\mathfrak{a}\subseteq aR\subseteq \mathfrak a .$$ Thus, $\mathfrak a=aR$ would be a principal ideal, contradictory to the choice.

 Let $S$ be the integral closure of $R$ in the degree 5 extension $E=F(\sqrt[5]{r})$. Then $\mathfrak{a}S=\sqrt[5]{r} S$,
\[
\widetilde{M}=S\otimes_RM= S \sqrt[5]{r} x + S \sqrt[5]{r}^{-1} y\quad\text{ and }\quad 1=Q(\sqrt[5]{r} x +  \sqrt[5]{r}^{-1} y) \in Q(\widetilde{M}).
\]This shows that the 1-dimensional diagonal lattice $N:=\langle 1\rangle$ cannot be embedded in $M$ over $R$, but $\widetilde{N}=S\otimes_RN$ can be embedded in $\widetilde{M}=S\otimes_RM$ over $S$.
\end{example}

\begin{example}\label{isotropy-exam}
  This example is to show that Theorem \ref{intro} can be false if $V=F\otimes_RM$ is anisotropic.
  Let $R=\mathbb{F}_q[t]$ be a one-variable polynomial ring over a finite field $\mathbb{F}_q$ of odd characteristic. Let $\delta\in\mathbb{F}_q^*$ be a non-square.
 By \cite[Lemma\;4.1]{HuLiuTian23},  the element $t\in R$ is not represented by the diagonal lattice $M:=\langle 1,-\delta,t\delta,-t^3\rangle$ over $R$. In other words, the lattice $N:=\langle t\rangle$ cannot be embedded into $M$ over $R$.

  Set $s=t^{-1}\in F=\mathbb{F}_q(t)$ and $E=F(\theta)$, where $\theta$ is a root of $f(X):=X^3-\delta X-s$. We claim that over the integral closure $S$ of $R$ in $E$, the lattice
  $S\otimes_RN$ can be embedded into $S\otimes_RM$.

  Indeed, the completion of $F$ at the infinite place (which corresponds to the $s$-adic valuation) is $F_{\infty}=\mathbb{F}_q(\!(s)\!)$. By Hensel's lemma, the polynomial $f$ admits a factorization $f=gh$, where $g,\,h$ are monic polynomials with coefficients in $R'=\mathbb{F}_q[\![s]\!]$ such that
  \[
  g\equiv X\pmod{s}\quad\text{and}\quad h\equiv X^2-\delta\pmod{s}.
  \]Hence, $E$ has two places $\infty_1,\,\infty_2$ lying over the infinite place of $F$, with completions
  $E_{\infty_1}\cong F_{\infty}$ and $E_{\infty_2}\cong F_{\infty}[X]/(h)$. Since $\delta$ is a square in the residue field of $E_{\infty_2}$, the quadratic space $E_{\infty_2}\otimes_RM$ is isotropic. By \cite[Thm.\;3.5 (ii)]{HuLiuTian23} (or \cite[p.135, lines 2--4]{HSX98}), the representability of $S$-lattices by $S\otimes_RM$ satisfies the Hasse principle. So it suffices to prove that $S\otimes_RM$ represents $S\otimes_RN$ everywhere locally. But this follows immediately from the fact that $N$ is locally represented by $M$ over completions of $R$ (\cite[Thm.\;4.2 (2)]{HuLiuTian23}). Our claim is thus proved.
\end{example}

\begin{example}\label{NumberField-exam}
  This example is to show that Question \ref{Spring-question} can have a negative answer if $R$ is the ring of integers in a number field $F$ and $F\otimes_RM$ is definite  (i.e., anisotropic over every archimedean completion of $F$). We take $F=\mathbb{Q}(\sqrt{13})$ and let $K=\mathbb{Q}(\theta)$, where $\theta$ is a root of $X^3+X^2-1$. Let $E=FK=\mathbb{Q}(\sqrt{13},\theta)$.

  Let $R$ (resp. $S$) be the ring of integers of $F$ (resp. $E$). The element
  \[
  \alpha=3+\bigg(\frac{1+\sqrt{13}}{2}\bigg)^2+\bigg(1+\frac{1+\sqrt{13}}{2}\bigg)^2
  \]is not a sum of 4 squares in $R$ (cf. \cite[Thm.\;3.1]{KrasenskyRavkaSgallova22}). In other words, the lattice $N:=\langle \alpha\rangle$ is not represented by the diagonal lattice $M:=\langle 1,1,1,1\rangle$ over $R$. We claim that $S\otimes_RN$ is represented by $S\otimes_RM$ over $S$.

  Indeed, the cubic field $K$ is not totally real, so the quadratic space $E\otimes_RM$ is indefinite. By \cite[p.135, lines 2--4]{HSX98} (or the proof of \cite[Thm.\;3.5 (ii)]{HuLiuTian23}), the representability of $S$-lattices by $S\otimes_RM$ satisfies the Hasse principle. So it suffices to prove that $S\otimes_RM$ represents $S\otimes_RN$ everywhere locally, i.e., $\alpha$ is a sum of 4 squares in the local completions of $S$ at all places of $E$. At a non-dyadic place the result follows from \cite[Prop.\;2.3]{XuZhang22TAMS}. Note that $E$ has discriminant $d_E=23^2\cdot 13^3$, by \cite[Prop.\;I.2.11]{Neu2}. So $2$ is unramified in $E$. Therefore, at each dyadic place of $S$ we can use \cite[Prop.\;4.3]{HeHu23JPAA} to deduce that $\alpha$ is represented by $S\otimes_RM$ over that completion. This proves our claim.
\end{example}

\begin{remark}\label{definite-exam}
  More examples of integral quadratic forms over number fields can be found in \cite{DaansKalaKrasenskyYatsyna24} answering Question \ref{Spring-question} in the negative, where as opposed to Example \ref{isotropy-exam} or \ref{NumberField-exam}, even the quadratic space $E\otimes_RM$ is definite.
\end{remark}

\subsection*{Acknowledgements}The first author was supported by the National Natural Science Foundation of China (grant no.\,12171223). The second author was partially supported by the short-term visiting program for doctoral students at Southern University of Science and Technology. The third author was supported by the National Key R\&D Program of China No.\,2023YFA1009702 and the National Natural Science Foundation of China (grant no.\,11631009).

\addcontentsline{toc}{section}{\textbf{References}}

	\bibliographystyle{siam}
	\bibliography{ref}
	
\end{document}